\definecolor{dgreen}{rgb}{0,.6,0}
\title{Hyperbolic cross approximation in infinite dimensions}
\author{ 
Dinh D\~ung \\[2mm]
 Information Technology Institute, Vietnam National University, Hanoi, \\
144 Xuan Thuy, Cau Giay, Hanoi, Vietnam\\
{\ttfamily dinhzung@gmail.com}\\[5mm] 
Michael Griebel \\[2mm]
Institute for Numerical Simulation, Bonn University \\
Wegelerstrasse 6, 53115 Bonn, Germany \\
Fraunhofer Institute for Algorithms and Scientific Computing SCAI\\
Schloss Birlinghoven, 53754 Sankt Augustin, Germany \\
{\ttfamily griebel@ins.uni-bonn.de} 
}
\def\ZZ{{\mathbb Z}}
\def\ZZdp{{\mathbb Z}^d_+}
\def\II{{\mathbb I}}
\def\DD{{\mathbb D}}
\def\NN{{\mathbb N}}
\def\RR{{\mathbb R}}
\def\GG{{\mathbb G}}
\def\TT{{\mathbb T}}
\def\DDd{{\mathbb D}^d}
\def\TTd{{\mathbb T}^d}
\def\TTm{{\mathbb T}^m}
\def\ZZd{{\mathbb Z}^d}
\def\ZZm{{\mathbb Z}^m}
\def\IIm{{\mathbb I}^m}
\def\ZZmp{{\mathbb Z}^m_+}
\def\IIi{{\mathbb I}^\infty}
\def\DDi{{\mathbb D}^\infty}
\def\RRi{{\mathbb R}^\infty}
\def\TTi{{\mathbb T}^\infty}
\def\RRi{{\mathbb R}^\infty}
\def\ZZi{{\mathbb Z}^{\infty}}
\def\ZZi{{\mathbb Z}^{\infty}}
\def\ZZis{{\mathbb Z}^{\infty}_*}
\def\ZZip{{\mathbb Z}^\infty_+}
\def\ZZips{{\mathbb Z}^\infty_{+*}}
\def\NNis{{\mathbb N}^\infty_*}
\def\Gg{{\mathcal G}}
\def\Hh{{\mathcal H}}
\def\Ii{{\mathcal I}}
\def\Jj{{\mathcal J}}
\def\Jjd{{\mathcal J}_d}
\def\Ll{{\mathcal L}}
\def\Pp{{\mathcal P}}
\def\Ss{{\mathcal S}}
\def\Tt{{\mathcal T}}
\def\Uu{{\mathcal U}}
\def\supp{\operatorname{supp}}
\newcommand{\bk}{{\bf k}}
\newcommand{\bl}{{\bf l}}
\newcommand{\bn}{{\bf n}}
\newcommand{\br}{{\bf r}}
\newcommand{\bs}{{\bf s}}
\newcommand{\bx}{{\bf x}}
\newcommand{\by}{{\bf y}}
\newcommand{\bb}{{\bf b}}
\newlength{\fixboxwidth}
\begin{document}
\maketitle
\begin{abstract}
We give tight upper and lower bounds of the cardinality of the
index sets of certain hyperbolic crosses  which reflect mixed Sobolev-Korobov-type smoothness and mixed Sobolev-analytic-type smoothness in the infinite-dimensional case where specific summability properties of the smoothness indices are fulfilled. 
These estimates are then applied to the linear approximation of functions from the associated spaces in terms of the $\varepsilon$-dimension of their unit balls. Here, the approximation is based on linear information. 
Such function spaces appear for example for the solution of parametric and stochastic PDEs.
The obtained upper and lower bounds of the approximation error as well as of the associated
$\varepsilon$-complexities are completely independent of any parametric or stochastic dimension. Moreover, the rates are independent of
 the parameters which define the smoothness properties of the infinite-variate parametric or stochastic part of the solution. These parameters are only contained in the order constants. 
This way, linear approximation theory becomes possible in the infinite-dimensional case and corresponding infinite-dimensional problems get tractable.  

\medskip
\noindent
{\bf Keywords}: infinite-dimensional hyperbolic cross approximation, mixed Sobolev-Korobov-type smoothness, mixed Sobolev-analytic-type smoothness, $\varepsilon$-dimension, parametric and stochastic elliptic PDEs, linear information.

\end{abstract}

\section{Introduction}

The efficient approximation of a function of infinitely many variables is an important issue in a lot of applications in physics, finance, engineering and statistics. 
It arises in uncertainty quantification, computational finance and computational physics 
and is encountered for example in the numerical treatment of path integrals, stochastic processes, random fields and stochastic or parametric PDEs.
While the problem of quadrature of functions in weighted Hilbert spaces with infinitely many variables has recently found a lot of interest 
in the information based complexity community, see e.g. \cite{BG14,CDMR09,DG14a,DG14b,G10,GMR14,HMNR,HW,KSWW, NHMR,PW,PWW,SW98,WW96}, there is much less literature on approximation.  
So far, the approximation of functions in weighted Hilbert spaces with infinitely many variables has been studied for a properly weighted $L_2$-error norm\footnote{There is also \cite{Wa12a,WW11a,WW11b}, where however a norm in a special Hilbert space was employed such that the approximation problem indeed got easier than the integration problem.} in \cite{Wa12b}. 
In any case, a reproducing kernel Hilbert space $H_K$ with kernel $K=\sum_u \gamma_u k_u$ is employed with $|u|$-dimensional kernels $k_u$ where $u$ varies over all finite subsets of $\NN$. It involves a sequence of weights  $\gamma = (\gamma_u)$ that moderate the influence of terms which depend on the variables associated with the finite-dimensional index sets $u \subset \{1,2,\ldots\}=\NN$. Weighted spaces had first been suggested for the finite-dimensional case in \cite{SW98}, see also \cite{SW1,SW2}. For further details, see \cite{DKS} and the references cited therein. The approximation of functions with anisotropically weighted Gaussian kernels has been studied in \cite{FHW}.

Moreover, there is work on sparse grid integration and approximation, see  \cite{BG} for a survey and bibliography.
It recently has found applications in uncertainty quantification for stochastic and parametric PDEs, 
especially for non-intrusive methods, compare \cite{BNTT,BNTT2,CCDS,CCS,CDS10b,CDS10a,GWZ,HS14,JR13,LK,NTW1,NTW2}. 
There, for the stochastic or parametric part of the problem, 
an anisotropic sparse grid approximation or quadrature is constructed either a priori from knowledge of the covariance decay of the stochastic data or a posteriori by means of dimension-adaptive refinement. 
This way, the infinite-dimensional case gets truncated dynamically to finite dimensions while the higher dimensions are trivially resolved by the constant one.
Here, in contrast to the above-mentioned approach using a weighted reproducing kernel Hilbert space, one usually relies on spaces with smoothness of increasing order, either for the mixed Sobolev smoothness situation or for the analytic setting. Thus, as already noticed in 
\cite{PW10}, one may have two options for obtaining tractability: either by using decaying weights or by using increasing smoothness. 

Sparse grids and hyperbolic crosses promise to break the curse of dimensionality which appears for conventional approximation methods, at least to some extent.
However, the approximation rates and cost complexities of conventional sparse grids for isotropic mixed Sobolev regularity still involve logarithmic terms which grow exponentially with the dimension.
In \cite{Di13, GH13} it could be shown that the rate of the approximation error and the cost complexity 
get completely independent of the dimension for the case of anisotropic mixed Sobolev regularity with sufficiently fast rising smoothness indices.  This also follows from results on approximation for anisotropic mixed smoothness, see, e.g., \cite{Di86, Te86} for details.
But the constants in the bounds for the approximation error and the cost rate could not be estimated sharply and still depend on the dimension $d$. 
Therefore, this result can not straightforwardly be extended to the infinite-dimensional case, i.e. to the limit of $d$ going to $\infty$.

This will be achieved in the present paper. 
To this end, we rely on the infinite-variate space $\Hh$ which is the tensor product ${\cal H}:=H^\alpha(\GG^m) \otimes K^\br(\DDi)$ of the Sobolev space 
$H^\alpha(\GG^m)$ and the infinite-variate space 
$K^\br(\DDi)$ of mixed smoothness with varying Korobov-type smoothness indices ${\bf r}=r_1, r_2 ,\ldots $, or we rely on the tensor product ${\cal H}:=H^\alpha(\GG^m) \otimes A^{\br,p,q} (\DDi)$ of $H^\alpha(\GG^m)$ with the
infinite-variate space $A^{\br,p,q}(\DDi)$ of mixed smoothness with varying analytic-type smoothness indices 
${\bf r}=r_1, r_2 ,\ldots $ (and $p$ and $q$ entering algebraic prefactors). The approximation error is measured in the tensor product Hilbert space $\Gg:=H^\beta(\GG^m) \otimes L_2(\DDi,\mu)$
with $\beta \ge 0$, which is isomorphic to the Bochner space $L_2(\DDi,H^\beta(\GG^m))$. 
Here, $\GG$ denotes either the unit circle (one-dimensional torus) $\TT$ in the periodic case or the interval 
$\II:= [-1,1]$ in the nonperiodic case.
Furthermore, $\DD$ is either $\TT$, $\II$ or $\RR$, depending on the respective situation under consideration.
Altogether, $\GG^m$ denotes the $m$-fold (tensor-product) domain where the $m$-dimensional physical coordinates live, whereas $\DDi$ denotes the infinite-dimensional (tensor-product) domain where the infinite-dimensional stochastic or parametric coordinates live.
Moreover,
$L_2(\DDi):=L_2(\DDi, \mu)$ is the space of all 
infinite-variate functions $f$ on $\DDi$ such that 
$\int_{\DDi} |f(\by)|^2 \, \mathrm{d} \mu(\by)
\ < \ \infty
$
with the infinite tensor product probability measure $\mathrm{d} \mu$ which is based on properly chosen univariate probability measures. 

Here, the spaces $H^\alpha(\GG^m) \otimes K^\br(\DDi)$ generalize the usual $d$-variate Korobov spaces $K^{(r_1,\ldots,r_d)}(\DDd)$ of mixed smoothness with different smoothness indices $r_1,\ldots,r_d$ to the infinite-variate case and additionally
contain in a tensor product way also the Sobolev space $H^\alpha(\GG^m)$. Moreover, the 
$(m+d)$-variate Sobolev-Korobov-type spaces $\Hh_d:= H^\alpha(\GG^m) \otimes K^\br(\DDd)$ of mixed smoothness with different weights for arbitrary but finite $d$ are naturally contained. 
Similarly,
the spaces $H^\alpha(\GG^m) \otimes A^{\br,p,q}(\DDi)$ generalize the $d$-variate spaces of mixed analytic smoothness with smoothness indices $p,q$ and $r_1,\ldots,r_d$ (for a precise definition, see Subsection \ref{Index sets for analytic smoothness}) to the infinite-variate case and additionally
contain in a tensor product way the Sobolev space $H^\alpha(\GG^m)$. 
Moreover, the 
$(m+d)$-variate Sobolev-analytic-type spaces $\Hh_d:= H^\alpha(\GG^m) \otimes A^{\br,p,q}(\DDd)$ of mixed Sobolev and analytic smoothnesses with different weights for arbitrary $d$ are naturally contained.
Thus, the problem of approximating functions from $\Hh$ in the $\Gg=H^\beta(\GG^m) \otimes L_2(\DDi)-$norm 
directly governs the problem of approximating functions from $\Hh_d$
 in the $\Gg_d:=H^\beta(\GG^m) \otimes L_2(\DDd)-$norm in both cases, where $d$ may be large but finite.

Now, let us fix some notation which is needed to describe the cost complexity of an approximation. In general, 
if $X$ is a Hilbert space and $W$ a subset of $X$, 
the Kolmogorov $n$-width $d_n(W,X)$ \cite{Ko36} is given by
\begin{equation} \label{[d_n]}
d_n(W,X)
:= \ 
\inf_{{M_n}} \ \sup_{f \in W} \ \inf_{g \in {M_n}} \|f - g\|_X,
\end{equation} 
where the outer infimum is taken over all linear manifolds {$M_n$} in $X$ of
dimension at most $n$.\footnote{ 
A different worst-case setting is represented by the linear $n$-width $\lambda_n(W,X)$ \cite{Ti60}.
This corresponds to a characterization of the best linear approximation error, see, e.g., \cite{DU13} for definitions.
Since $X$ is here a Hilbert space, both concepts coincide, i.e., we have
$d_n(W,X) \ = \ \lambda_n(W,X).$
}
Furthermore, the so-called $\varepsilon$-dimension $n_\varepsilon = n_\varepsilon(W,X)$ is usually employed to quantify the computational complexity. 
It is defined as 
\[
n_\varepsilon(W,X)
:= \ 
{
\inf \left\{n:\, \exists  \ M_n: 
\ \dim M_n \le n, \ \sup_{f \in W} \ \inf_{g \in M_n} \|f - g\|_X \le \varepsilon \right\}
},
\] 
where $M_n$ is a linear manifold in $X$ of dimension $\le n$.
This quantity is just the inverse of $d_n(W,X)$. 
Indeed, $n_\varepsilon(W,X)$ is the minimal number
$n_\varepsilon$ such that the approximation of $W$ by a suitably
chosen $n_\varepsilon$-dimensional subspace $M_n$ in $X$ yields the approximation error to be less or equal to $\varepsilon$. 
Moreover, $n_\varepsilon(W,X)$ is the smallest number of linear functionals that is needed by an algorithm to give for all $f \in W$ an approximation
with an error of at most $\varepsilon$.
From the computational point of view it is more convenient to study $n_\varepsilon(W,X)$ than $d_n(W,X)$ since it is directly related to the computational cost.

The approximation of functions with anisotropic mixed smoothness goes back to papers by various authors from the former Soviet Union, initiated in \cite{Ba60}. We refer the reader to \cite{Di86, Te86} for a survey and bibliography. In particular, in \cite{Di84}, the rate of the cardinality of anisotropic hyperbolic crosses was computed and used in the estimation of $d_n(U^\br(\TTd),L_2(\TTd))$, where $U^\br(\TTd)$ is the unit ball of the space of functions of
bounded anisotropic mixed derivatives $\br$ with respect to the  $L_2(\TTd)$-norm. 
Moreover, the $\varepsilon$-dimensions of classes of mixed smoothness were investigated in \cite{Di79,Di80,DD91,DM79}. 
Recently, $n$-widths and $\varepsilon$-dimensions in the classical isotropic Sobolev space $H^r(\TTd)$ of $d$-variate periodic functions and of Sobolev classes with  mixed and other anisotropic smoothness have been studied for high-dimensional settings \cite{CD13, DU13}.
There,  although the dimension $n$ of the approximating subspace is the main parameter of the convergence rate, where $n$ is going to infinity, 
the parameter $d$ may seriously affect this rate when $d$ is large. 

Now, let $\Uu$ and $\Uu_d$  be the unit ball in $\Hh$ and $\Hh_d$, respectively. In the present paper, we give upper and lower bounds for  $n_\varepsilon(\Uu,\Gg)$ and $n_\varepsilon(\Uu_d,\Gg_d)$ for both, the Sobolev-Korobov and the Sobolev-analytic mixed smoothness spaces. 
To this end, we first derive tight estimates of the cardinalities of hyperbolic cross index sets  which are associated to the chosen accuracy $\varepsilon$. 
Since the corresponding approximations in infinite tensor product Hilbert spaces then possess the accuracy $\varepsilon$,
this indeed gives bounds on $n_\varepsilon(\Uu,\Gg)$ and $n_\varepsilon(\Uu_d,\Gg_d)$. 
Here, depending on the underlying domain, we focus on 
approximations by trigonometric polynomials (periodic case) and Legendre and Hermite polynomials (nonperiodic case, bounded and non-bounded domain)
with indices from hyperbolic crosses that correspond to frequencies and powers in the infinite-dimensional case. 
Altogether, we are able to show estimates that are completely independent on any dimension $d$ in both, rates and order constants, provided that a moderate summability condition on the sequence of smoothness indices $\bf r$ holds.

In the following, as example, let us mention one of our main results on the cardinality of hyperbolic crosses in the infinite-dimensional case and on the related $\varepsilon$-dimension.
To this end, let $m, \alpha, \beta, \br, p,q$ be given by  
\begin{equation} \label{[br]}
m \in \NN; \quad \alpha > \beta \ge 0; \quad 
\br = (r_j)_{j=1}^\infty \in \RR^\infty_+, \ 0 < r_1 \le r_2 \le \cdots \le r_j\cdots; \quad p\ge 0, \ q >0.  
\end{equation} 
(with the additional restriction $(\alpha - \beta)/m < r_1$ for the Sobolev-Korobov smoothness case).
Then, if moderate summability conditions on the sequence of smoothness indices $\bf r$ hold, we have for every 
$d \in \NN$ and every $\varepsilon \in (0,1]$
\begin{equation} \label{ineq[<n_e<]}
\lfloor \varepsilon^{- 1/(\alpha - \beta)} \rfloor^m - 1
\ \le \
n_\varepsilon(\Uu_d, \Gg_d)
\ \le \
n_\varepsilon(\Uu, \Gg)
\ \le \
C \, \varepsilon^{- m/(\alpha - \beta)},
\end{equation}
where  $C$ depends on $m, \alpha, \beta, \br, p,q$ but not on $d$. Thus, the upper and lower bounds on the $\varepsilon$-dimension
$n_\varepsilon(\Uu_d, \Gg_d)$ are completely free of the dimension for any value of $d$.
These estimates are derived from the relations
\begin{equation} \nonumber
|G(\varepsilon^{-1})| - 1
\ \le \
n_\varepsilon(\Uu, \Gg)
\ \le \
|G(\varepsilon^{-1})|,
\end{equation}
and 
\begin{equation} \label{Introduction[G(T)]}
\lfloor T^{1/(\alpha - \beta)} \rfloor^m 
\ \le \
|G(T)|
\ \le \
C \, T^{m/(\alpha - \beta)},
\end{equation}
where $G(T)$ is the relevant hyperbolic cross induced by $T=\varepsilon^{-1}$
and  $C$ is as in \eqref{ineq[<n_e<]}. Throughout this paper, $|G|$ denotes the cardinality of a finite set $G$.

Note here the following properties of \eqref{ineq[<n_e<]}:
\begin{itemize}
\item[(i)] The upper and lower bounds of $n_\varepsilon(\Uu_d, \Gg_d)$  and $n_\varepsilon(\Uu, \Gg)$ are tight and independent of $d$.
\item[(ii)] The term $\varepsilon^{- m/(\alpha - \beta)}$ depends on $\varepsilon$, on the dimension $m$ and on the smoothnesses indices $\alpha$ and $\beta$ of the Sobolev component parts $H^\alpha(\GG^m)$ and $H^\beta(\GG^m)$ of the spaces $\cal H$ and $\cal G$ only. 
\item[(iii)] The infinite series of smoothness indices $\br$ in the Korobov or analytic component parts is contained in $C$ only and does not show up in the rates.
\item[(iv)] The necessary summability conditions on $m, \alpha, \beta, \br, p, q$  are natural and quite moderate (see e.g. the assumptions of Theorem \ref{theorem[n_e]}). 
\end{itemize}
Altogether, the right-hand side of \eqref{ineq[<n_e<]} is (up to the constant) just the same as the rate which we would obtain 
from an approximation problem  in the energy norm  
 $H^\beta(\GG^m)$ of functions from the Sobolev space $H^\alpha(\GG^m)$ alone, i.e. $n_\varepsilon(U^\alpha(\GG^m), H^\beta(\GG^m)) \asymp \varepsilon^{- m/(\alpha - \beta)}$, where 
$U^\alpha(\GG^m)$ is the unit ball in $H^\alpha(\GG^m)$. Therefore, the $\varepsilon$-dimension and thus the complexity of
our infinite-dimensional approximation problem is (up to the constant) just that of the $m$-dimensional approximation problem without the
infinite-variate component. 

From \eqref{ineq[<n_e<]} we can see that the $d$-dimensional problem $n_\varepsilon(\Uu_d, \Gg_d)$ is strongly polynomially tractable. This property crucially depends on the chosen norm as well on certain restrictions on the mixed smoothness, see for instance the prerequisites of Theorem~\ref{theorem[G<]} and of Theorem~\ref{theorem[n_e]-periodic}  in the case of mixed Sobolev-Korobov-type smoothness. If a different type of norm definition is employed, completely different results are obtained. For an example, see \cite{PW10}, where it is shown that increasing the smoothness (no matter how fast) does not help there.
Moreover, the effect of different norm definitions on the approximation numbers of Sobolev embeddings with particular emphasis on the dependence on the dimension is studied for the periodic case in \cite{KMU15,KSU14,KSU15}.

For an application of our approximation error estimate, let us now assume that an infinite-variate function $u$ is living in  $\Gg$ which is isomorphic 
to the Bochner space $ L_2(\DD^\infty,H^{\beta}(\GG^m) )$. Let us furthermore assume that $u$ possesses some higher regularity, i.e., to be precise, assume 
that $u \in \Hh$ with either $\Hh=H^{\alpha}(\GG^m) \otimes K^{\br}(\DD^\infty)$ or $\Hh=H^{\alpha}(\GG^m) \otimes A^{\br,p,q}(\DD^\infty)$.
Let $n=|G(T)|$ and $L_n$ be the projection onto the subspace of suitable polynomials with frequencies or/and powers in $G(T)$. 
Then, with the above notation and assumptions for $\varepsilon$-dimensions, we have 
\[
\|u - L_n(u)\|_\Gg 
\ \leq \ 
C^{(\alpha - \beta)/m} \,  
n^{- (\alpha - \beta)/m} \, \| u\|_\Hh,
\]
where $C$ is as in \eqref{ineq[<n_e<]}.
Indeed, such functions $u$ are, for certain $\sigma(\bx,\by)$ and $f(\bx,\by)$,  the solution
of the parametric or stochastic elliptic PDE
\begin{equation}\label{Introduction[spde]}
-\mathrm{div}_\bx(\sigma(\bx,\by)\nabla_\bx u(\bx,\by)) = f(\bx,\by) \quad \bx \in \GG^m \quad \by \in \DD^\infty,
\end{equation} 
with homogeneous boundary conditions $u(\bx,\by)=0, \ \bx\in \GG^m $, $\by \in \DD^\infty$.
Here, we have to find a real-valued function $u: \GG^m \times \DD^\infty \to \RR$ such that 
\eqref{Introduction[spde]} holds $\mu$-almost everywhere.
Thus, our results also shed light on the $\varepsilon$-dimension and the complexity of properly defined linear approximation schemes for infinite-dimensional stochastic/parametric PDEs in the case of linear information.

The remainder of this paper is organized as follows: 
In Section \ref{cardinality of HC}, we establish tight upper and lower bounds for the cardinality of hyperbolic crosses with varying smoothness weights in the infinite-dimensional setting. 
In Section \ref{Approximation in Hilbert spaces}, we study
hyperbolic cross approximations and their $\varepsilon$-dimensions in infinite tensor product Hilbert spaces.
In Section \ref{Function approximation}, we consider hyperbolic cross approximations of infinite-variate functions in particular for periodic functions from periodic Sobolev-Korobov-type mixed spaces and for nonperiodic functions from the Sobolev-analytic-type mixed smoothness spaces.
In Section \ref{application}, we discuss the application of our results to a model problem from parametric/stochastic elliptic PDEs. Finally, we give some concluding remarks in Section \ref{conclusion}.

\section{The cardinality of  hyperbolic crosses in the infinite-dimensional case}
\label{cardinality of HC}

In this section, we establish upper and lower bounds for the cardinality of various index sets of hyperbolic crosses
in the infinite-dimensional case.
First, we consider index sets which correspond to the mixed Sobolev-Korobov-type setting with varying polynomial smoothness, then we consider cases with mixed Sobolev-analytic smoothness where also exponential smoothness terms appear. 

We will use the following notation: $\RRi$ is the set of all sequences $\by= (y_j)_{j=1}^\infty$ with $y_j \in \RR$ and
$\ZZi$ is the set of all sequences $\bs= (s_j)_{j=1}^\infty$ with $s_j \in \ZZ$. Furthermore, 
$\ZZip := \{\bs \in \ZZi: s_j \ge 0, \ j =1,2,...\}$, $y_j$ is the $j$th coordinate of 
$\by \in \RR^\infty$. Moreover, $\ZZis$ is a subset of $\ZZi$ of all $\bs$ such that $\supp(\bs)$ is finite, where 
$\supp(\bs)$ is the support of $\bs$, that is the set of all $j \in \NN$ such that 
$s_j \not=0$. Finally, $\ZZips := \ZZis \cap \ZZip$.

\subsection{Index sets for mixed Sobolev-Korobov-type smoothness}   
\label{Index sets for Korobov smoothness} 

Let $m\in \ZZ_+$, $a >0$ be given and let $\br = (r_j)_{j=1}^\infty \in \RR^\infty_+$ be given with
\begin{equation}  \nonumber
 \quad 0<r = r_1 \cdots = r_{t+1}; \quad  r_{t+2} \le  r_{t+3} \le \cdots.
\end{equation}
For each  $(\bk,\bs) \in \ZZm \times \ZZis$ with $\bk \in \ZZm$ and $\bs \in \ZZis$, we define the scalar $\lambda(\bk,\bs)$ by
\begin{equation} \label{[lambda_{br}]}
\lambda(\bk,\bs) 
 := \
\max_{1\le j \le m}(1 + |k_j|)^a\prod_{j=1}^{t+1}(1 + |s_j|)^r 
\prod_{j=t+2}^\infty(1 + |s_j|)^{r_j}. 
\end{equation}
Here, the associated functions will possess isotropic smoothness of index $a$ for the coordinates $k_j$, they will possess  mixed smoothness of index $r$ for
the first $t+1$ coordinates $s_j$ of $\bf s$ and they will possess monotonously rising mixed smoothness indices $r_j$ for the following coordinates $s_j$.

Now, for $T>0$, consider the hyperbolic crosses in the infinite-dimensional setting $\ZZmp \times \ZZips$ with indices $a,\br$ 
\begin{equation} \label{G(T)}
G(T) 
:= \ 
\big\{(\bk,\bs)  \in \ZZmp \times \ZZips: \lambda(\bk,\bs)   \leq T\big\}.
\end{equation}
The cardinality of  $G(\varepsilon^{-1})$ will later describe the necessary dimension of the approximation spaces of the associated linear approximation with accuracy $\varepsilon$.

We want to derive estimates for the cardinality of $G(T)$.
To this end, we will make use of the following lemmata.

\begin{lemma} \label{lemma[sum-convex-function]}
Let $\mu, \nu \in \NN$ with $\mu \ge \nu$, and let $\varphi$ be a convex function on the interval $(0,\infty)$. 
Then, we have
\begin{equation}
\sum_{k=\nu}^\mu \varphi(k)
\ \le \
\int_{\nu - 1/2}^{\mu + 1/2} \varphi(x) \, \mathrm{d} x,
\quad \mbox{and} \quad 
\sum_{k=\nu}^\infty \varphi(k)
\ \le \
\int_{\nu - 1/2}^\infty \varphi(x) \, \mathrm{d} x,
\end{equation}
\end{lemma}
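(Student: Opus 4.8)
The plan is to reduce both inequalities to a single per-term estimate, namely the midpoint form of the Hermite--Hadamard inequality,
\[
\varphi(k) \ \le \ \int_{k-1/2}^{k+1/2}\varphi(x)\,\mathrm{d}x, \qquad k \ge 1,
\]
and then to sum this over the relevant range of $k$, exploiting that the unit intervals $[k-1/2,\,k+1/2]$ tile the integration domain without overlap.

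First I would establish the per-term bound directly from convexity. Fix an integer $k \ge 1$, so that $[k-1/2,\,k+1/2] \subset (0,\infty)$, which is guaranteed because $\nu \ge 1$ forces $k \ge 1$. For $t \in [0,1/2]$, midpoint convexity of $\varphi$ gives $\varphi(k-t) + \varphi(k+t) \ge 2\varphi(k)$. Writing the integral symmetrically as $\int_{k-1/2}^{k+1/2}\varphi(x)\,\mathrm{d}x = \int_0^{1/2}\bigl(\varphi(k-t)+\varphi(k+t)\bigr)\,\mathrm{d}t$ and inserting this pointwise bound yields
\[
\int_{k-1/2}^{k+1/2}\varphi(x)\,\mathrm{d}x \ \ge \ \int_0^{1/2} 2\,\varphi(k)\,\mathrm{d}t \ = \ \varphi(k),
\]
which is exactly the claimed per-term estimate.

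Next, summing this bound over $k = \nu, \nu+1, \ldots, \mu$ and using additivity of the integral over the consecutive intervals $[k-1/2,\,k+1/2]$, whose union is precisely $[\nu-1/2,\,\mu+1/2]$, produces the first inequality. For the second inequality I would either sum the per-term bound over all $k \ge \nu$ directly, the telescoping of the integration intervals now covering $[\nu-1/2,\infty)$, or equivalently pass to the limit $\mu \to \infty$ in the finite version; since in the intended applications the summands $\varphi(k)$ are nonnegative and summable, both sides converge monotonically to the stated expressions and the limiting inequality follows.

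I do not expect a genuine obstacle here. The only point requiring a little care is the per-term midpoint inequality, which is precisely where convexity is used; everything else is bookkeeping of how the half-integer-shifted unit intervals cover the integration range, together with the elementary observation that $\nu \ge 1$ keeps every interval inside $(0,\infty)$, where $\varphi$ is assumed defined and convex.
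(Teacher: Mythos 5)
Your proposal is correct and follows essentially the same route as the paper: the paper also reduces everything to the midpoint bound $\varphi(k)\le\int_{k-1/2}^{k+1/2}\varphi(x)\,\mathrm{d}x$, obtained from the convexity estimate $g(1/2)\le\frac{1}{2}\int_0^1[g(x)+g(1-x)]\,\mathrm{d}x=\int_0^1 g(x)\,\mathrm{d}x$ applied to $g_k(x):=\varphi(x+k-1/2)$, which is just your symmetrization $\varphi(k)\le\frac{1}{2}(\varphi(k-t)+\varphi(k+t))$ in different variables, and then sums over $k$ and lets $\mu\to\infty$ for the infinite case.
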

\begin{proof}
Observe that, for a convex function $g$ on $[0,1]$, there holds true the inequality 
\begin{equation} 
g(1/2)
\ \le \
\frac{1}{2}\int_0^1  [g(x) + g(1-x)] \, \mathrm{d} x 
\ = \ \int_0^1 g(x) \, \mathrm{d} x.
\end{equation}
Applying this inequality to the functions $g_k(x):= \varphi(x + k- 1/2)$, $k \in \NN$, we obtain
\begin{equation} 
\varphi(k)
\ = \
g_k(1/2)
\ \le \
 \int_0^1 g_k(x) \, \mathrm{d} x
\ = \ \int_{k-1/2}^{k+1/2} \varphi(x) \, \mathrm{d} x.
\end{equation}
Hence, we have for any $\mu, \nu \in \NN$ with $\mu \ge \nu$,
\begin{equation}\label{kserest}
\sum_{k=\nu}^\mu \varphi(k)
\ \le \
\sum_{k=\nu}^\mu  \int_{k-1/2}^{k+1/2} \varphi(x) \, \mathrm{d} x
\ = \
\int_{\nu - 1/2}^{\mu + 1/2} \varphi(x) \, \mathrm{d} x
\end{equation}
which proves the first and, with $\mu \to \infty$, the second inequality of the lemma.
\hfill\end{proof}

\begin{lemma} \label{lemma[<sum_k^-a]}
Let $\eta>1$. Then, we have
\begin{equation} \label{ineq[<sum_k^-a]]}
\sum_{k=2}^\infty k^{-\eta}
\ < \
 \frac{1}{\eta-1} \left(\frac{3}{2}\right)^{-(\eta-1)}.
\end{equation}
\end{lemma}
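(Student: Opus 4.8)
The plan is to recognize the summand as the value of a convex function and then to apply the second inequality of Lemma~\ref{lemma[sum-convex-function]} directly. First I would set $\varphi(x) := x^{-\eta}$ and note that on $(0,\infty)$ we have $\varphi''(x) = \eta(\eta+1)\,x^{-\eta-2} > 0$, so $\varphi$ is (strictly) convex there, which is exactly the hypothesis required by Lemma~\ref{lemma[sum-convex-function]}.

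Next I would invoke the second estimate of that lemma with $\nu = 2$, giving
\begin{equation} \nonumber
\sum_{k=2}^\infty k^{-\eta}
\ \le \
\int_{2 - 1/2}^\infty x^{-\eta} \, \mathrm{d} x
\ = \
\int_{3/2}^\infty x^{-\eta} \, \mathrm{d} x.
\end{equation}
The remaining integral is elementary: since $\eta > 1$, an antiderivative of $x^{-\eta}$ is $x^{1-\eta}/(1-\eta)$, the boundary term at infinity vanishes, and
\begin{equation} \nonumber
\int_{3/2}^\infty x^{-\eta} \, \mathrm{d} x
\ = \
\frac{1}{\eta - 1}\left(\frac{3}{2}\right)^{-(\eta-1)}.
\end{equation}
This chain already yields the stated bound, but with $\le$ in place of the strict $<$ claimed in the lemma.

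To upgrade to the strict inequality, I would revisit the one-step estimate underlying the proof of Lemma~\ref{lemma[sum-convex-function]}: for a \emph{strictly} convex $g$ the Jensen-type inequality $g(1/2) < \int_0^1 g(x)\,\mathrm{d} x$ is strict. Since $\varphi$ is strictly convex, applying this to $g_k(x) = \varphi(x + k - 1/2)$ for even a single index, say $k=2$, makes at least one term of the summation estimate strict, and hence the entire inequality chain strict. I do not anticipate any real obstacle; the only point deserving a moment of care is this passage from the $\le$ that Lemma~\ref{lemma[sum-convex-function]} supplies verbatim to the strict $<$ asserted here, which is secured precisely by the strict convexity of $x^{-\eta}$.
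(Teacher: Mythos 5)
Your proposal is correct and follows essentially the same route as the paper: both apply the second inequality of Lemma~\ref{lemma[sum-convex-function]} with $\nu=2$ to the convex function $x^{-\eta}$ and evaluate $\int_{3/2}^\infty x^{-\eta}\,\mathrm{d} x = \frac{1}{\eta-1}\left(\frac{3}{2}\right)^{-(\eta-1)}$. Your additional remark on upgrading $\le$ to the strict $<$ via strict convexity is a point the paper's own proof glosses over (it simply writes $\le$ and asserts the strict conclusion), so your treatment is, if anything, slightly more careful.
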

\begin{proof}
Since the function $x^{-\eta}$ is non-negative and convex on $(0,\infty)$, we obtain
\begin{equation}\nonumber
\sum_{k=2}^\infty k^{-\eta}
\ \le \
\int_{3/2}^\infty x^{-\eta} \, \mathrm{d} x 
\ = \
\frac{1}{\eta-1} \left(\frac{3}{2}\right)^{-(\eta-1)}
\end{equation}
by applying Lemma \ref{lemma[sum-convex-function]} for $\nu = 2$.
\hfill\end{proof}

\begin{lemma} \label{lemsumfin}
Let $\eta\in \RR\setminus (0,1)$ and $\mu \in \NN$. Then, we have 
\begin{equation} \label{ineqsumfin}
\sum_{k=1}^\mu k^{\eta}
\ \le \
\begin{cases}
\frac{1}{\eta+ 1} \left(\frac{3}{2}\right)^{\eta+ 1} \mu^{\eta+ 1}, \  & \ \eta> -1, \\[2ex]
\log (2\mu + 1), \  & \ \eta= -1, \\[2ex]
 \frac{1}{|\eta| - 1} 2^{|\eta| - 1}, \  & \ \eta< -1.
\end{cases}
\end{equation}
\end{lemma}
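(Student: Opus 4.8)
The plan is to prove the three cases of Lemma~\ref{lemsumfin} by splitting the sum and applying the convexity estimate of Lemma~\ref{lemma[sum-convex-function]} to the tail, handling the initial term separately since the function $x^\eta$ may fail to be convex at the very first index. The central observation is that $x^\eta$ is convex on $(0,\infty)$ precisely when $\eta \ge 1$ or $\eta \le 0$, which is exactly the restriction $\eta \in \RR \setminus (0,1)$ assumed in the statement. This is what makes Lemma~\ref{lemma[sum-convex-function]} applicable.

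First I would treat the case $\eta > -1$ (with $\eta \ge 1$ or $-1 < \eta \le 0$, so that convexity holds). Applying the first inequality of Lemma~\ref{lemma[sum-convex-function]} with $\nu = 1$ and $\mu = \mu$ gives
\begin{equation} \nonumber
\sum_{k=1}^\mu k^\eta
\ \le \
\int_{1/2}^{\mu + 1/2} x^\eta \, \mathrm{d} x
\ = \
\frac{1}{\eta + 1}\Big[(\mu + 1/2)^{\eta + 1} - (1/2)^{\eta + 1}\Big]
\ \le \
\frac{1}{\eta + 1}(\mu + 1/2)^{\eta + 1}.
\end{equation}
It then remains to bound $(\mu + 1/2)^{\eta + 1}$ by $(3/2)^{\eta + 1}\mu^{\eta + 1}$, which follows from $\mu + 1/2 \le (3/2)\mu$ for $\mu \ge 1$ together with $\eta + 1 > 0$. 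For the case $\eta = -1$, I would again apply Lemma~\ref{lemma[sum-convex-function]} with $\nu = 1$ to obtain $\sum_{k=1}^\mu k^{-1} \le \int_{1/2}^{\mu + 1/2} x^{-1}\, \mathrm{d} x = \log(\mu + 1/2) - \log(1/2) = \log(2\mu + 1)$, which is exactly the claimed bound.

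The case $\eta < -1$ is the one requiring the most care, and I expect it to be the main obstacle. Here the sum converges as $\mu \to \infty$, so the natural strategy is to peel off the first term (which equals $1$) and estimate the remaining tail. Writing $\eta = -|\eta|$ with $|\eta| > 1$, I would isolate $k = 1$ and apply Lemma~\ref{lemma[sum-convex-function]} to $\sum_{k=2}^\mu k^{-|\eta|} \le \int_{3/2}^\infty x^{-|\eta|}\, \mathrm{d} x = \frac{1}{|\eta| - 1}(3/2)^{-(|\eta| - 1)}$, in the spirit of Lemma~\ref{lemma[<sum_k^-a]}. The delicate point is then to absorb the leading term $1$ into the target bound $\frac{1}{|\eta| - 1}2^{|\eta| - 1}$ uniformly in $\eta$; since $2^{|\eta| - 1} \ge 1$ and $\frac{1}{|\eta|-1}$ can be large or small, one must verify that $1 + \frac{1}{|\eta|-1}(3/2)^{-(|\eta|-1)} \le \frac{1}{|\eta|-1}2^{|\eta|-1}$ holds for all $|\eta| > 1$. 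I would check this by comparing growth rates: the right-hand side grows exponentially in $|\eta|$ while the added constant $1$ is fixed, and near $|\eta| = 1^+$ the factor $\frac{1}{|\eta|-1}$ dominates on both sides, so the inequality can be confirmed by a direct monotonicity argument on the auxiliary function $|\eta| \mapsto \frac{1}{|\eta|-1}\big(2^{|\eta|-1} - (3/2)^{-(|\eta|-1)}\big)$.
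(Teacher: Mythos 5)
Your proof is correct, and in the cases $\eta > -1$ and $\eta = -1$ it coincides exactly with the paper's: both apply Lemma \ref{lemma[sum-convex-function]} with $\nu = 1$, evaluate $\int_{1/2}^{\mu+1/2} x^\eta\,\mathrm{d}x$, and finish with $\mu + 1/2 \le \tfrac{3}{2}\mu$. In the case $\eta < -1$, however, you take a genuinely different and more laborious route. The paper does not peel off the $k=1$ term: since $x^\eta$ is convex on all of $(0,\infty)$ for $\eta \le 0$ (so your worry about convexity failing at the first index is unfounded in this case), it applies the same integral bound starting at $1/2$ and simply computes
\begin{equation} \nonumber
\sum_{k=1}^\mu k^{\eta}
\ \le \
\int_{1/2}^{\mu+1/2} x^{\eta}\,\mathrm{d}x
\ = \
\frac{1}{|\eta|-1}\Big[(1/2)^{\eta+1} - (\mu+1/2)^{\eta+1}\Big]
\ \le \
\frac{1}{|\eta|-1}\,2^{|\eta|-1},
\end{equation}
so the target bound drops out with no further work; the integral over $[1/2,3/2]$ already absorbs the first term. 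Your route, isolating $k=1$ and bounding the tail as in Lemma \ref{lemma[<sum_k^-a]}, forces you to verify the auxiliary inequality $1 + \frac{1}{t}\left(\frac{2}{3}\right)^{t} \le \frac{1}{t}\,2^{t}$ for $t := |\eta|-1 > 0$, equivalently $t + (2/3)^t \le 2^t$. This is true, and your monotonicity idea can be made rigorous: $h(t) := 2^t - (2/3)^t$ is convex with $h(0)=0$, hence $h(t)/t$ is non-decreasing, and $\lim_{t\to 0^+} h(t)/t = \ln 3 > 1$, which gives the claim. But note that the margin near $t=0$ is only $\ln 3 - 1 \approx 0.099$, so this step genuinely requires the verification and is not a soft comparison of growth rates. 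In short, your argument is sound once that inequality is checked; the paper's uniform treatment of all three cases avoids it entirely, which is what integrating from $1/2$ rather than $3/2$ buys.
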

\begin{proof}
The function $x^\eta$ is non-negative and convex in the interval $(0, \infty)$ for $\eta\in \RR \setminus (0,1)$. Thus, there holds
by Lemma \ref{lemma[sum-convex-function]}
\begin{equation} \label{[sum_s^alpha<(2)]} 
\sum_{k=1}^\mu k^{\eta}
\ \le \
\int_{1/2}^{\mu+1/2} x^{\eta} \, \mathrm{d} x
\ = \ 
\begin{cases}
\frac{1}{\eta+ 1} x^{\eta+ 1} \Big|_{1/2}^{\mu+1/2}, \  & \ \eta\not= -1, \\[4ex]
\log x \Big|_{1/2}^{\mu+1/2}, \  & \ \eta= -1.
 \end{cases}
\end{equation}
Hence, 
\begin{equation} \label{[sum_s^alpha<(1)]}
\sum_{k=1}^\mu k^\eta
\ \le \
\begin{cases}
\frac{1}{\eta+ 1}(\mu+1/2)^{\eta+ 1}, \  & \ \eta> -1, \\[2ex]
\log (\mu+1/2) + \log 2, \  & \ \eta= -1, \\[2ex]
\frac{1}{|\eta| - 1} 2^{|\eta| - 1}, \  & \ \eta< -1,
 \end{cases}
\end{equation}
which implies (\ref{ineqsumfin}). 
\hfill\end{proof}

First of all, for $T\ge 1$, $m \in \NN$, we consider the hyperbolic cross in the finite, $m$-dimensional case
\begin{equation}\nonumber
\Gamma(T) 
:= \ 
\bigg\{\bl \in \NN^m: \prod_{j=1}^m l_j \leq T\bigg\}.
\end{equation}
We have the following bound for the cardinality $ |\Gamma(T)|$:
\begin{lemma} \label{lemma[Gamma<]}
For $T \ge 1$, it holds
\begin{equation} \label{[Gamma<](2)}
|\Gamma(T)|
\ \le \
 \frac{2^m}{(m-1)!} \  T(\log T + m \log 2)^{m-1}.
\end{equation}
\end{lemma}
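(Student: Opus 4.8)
The plan is to prove the bound by induction on the dimension $m$, writing $\Gamma_m(T)$ for $\Gamma(T)$ to make the dependence on $m$ explicit. The base case $m=1$ is immediate: $\Gamma_1(T)=\{l_1\in\NN:\,l_1\le T\}$ has cardinality $\lfloor T\rfloor\le T$, which is bounded by $\frac{2^1}{0!}T(\log T+\log 2)^0=2T$. For the inductive step I would slice $\Gamma_m(T)$ along the last coordinate: setting $k=l_m$, the condition $\prod_{j=1}^m l_j\le T$ is equivalent to $\prod_{j=1}^{m-1}l_j\le T/k$ with $1\le k\le\lfloor T\rfloor$, so that
\[
|\Gamma_m(T)| \ = \ \sum_{k=1}^{\lfloor T\rfloor}|\Gamma_{m-1}(T/k)|.
\]
Since $k\le T$ gives $T/k\ge 1$, the induction hypothesis applies to each summand.

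Applying the hypothesis $|\Gamma_{m-1}(S)|\le\frac{2^{m-1}}{(m-2)!}S(\log S+(m-1)\log 2)^{m-2}$ with $S=T/k$ and writing $c:=(m-1)\log 2$, I would reduce the claim to the single-sum estimate
\[
\Sigma \ := \ \sum_{k=1}^{\lfloor T\rfloor}\frac1k\big(\log(T/k)+c\big)^{m-2} \ \le\ \frac{2}{m-1}\big(\log T+m\log 2\big)^{m-1},
\]
since $\frac{2^{m-1}}{(m-2)!}\cdot\frac{2}{m-1}=\frac{2^m}{(m-1)!}$. To bound $\Sigma$ I would treat the term $k=1$ separately and compare the remaining terms with an integral. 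On each interval $[k-1,k]$ with $2\le k\le\lfloor T\rfloor$ the factor $(\log(T/x)+c)^{m-2}$ is nonincreasing and nonnegative in $x$ (as $x\le T$ forces $\log(T/x)\ge 0$), hence at least its right-endpoint value $(\log(T/k)+c)^{m-2}$, while $\int_{k-1}^k x^{-1}\,\mathrm{d}x=\log\tfrac{k}{k-1}\ge\tfrac1k$; combining these gives $\frac1k(\log(T/k)+c)^{m-2}\le\int_{k-1}^k x^{-1}(\log(T/x)+c)^{m-2}\,\mathrm{d}x$. Summing and using the substitution $u=\log(T/x)+c$ yields
\[
\Sigma \ \le\ (\log T+c)^{m-2}+\int_1^T\frac{(\log(T/x)+c)^{m-2}}{x}\,\mathrm{d}x \ \le\ (\log T+c)^{m-2}+\frac{1}{m-1}(\log T+c)^{m-1}.
\]

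The last step, and the one I expect to be the only delicate point, is to absorb the stray lower-order term $(\log T+c)^{m-2}$ into the main term while simultaneously upgrading $c=(m-1)\log 2$ to $m\log 2$; this is precisely where the slightly wasteful constant $m\log 2$ in the statement earns its keep. Writing $A:=\log T+c\ge 0$ and $B:=A+\log 2=\log T+m\log 2$, the target inequality $A^{m-2}+\frac1{m-1}A^{m-1}\le\frac{2}{m-1}B^{m-1}$ becomes, after multiplying by $m-1$, the inequality $(m-1)A^{m-2}+A^{m-1}\le 2B^{m-1}$. Using the binomial lower bound $B^{m-1}\ge A^{m-1}+(m-1)\log 2\,A^{m-2}$ reduces this to $(m-1)(1-2\log 2)A^{m-2}\le A^{m-1}$, which holds trivially because $1-2\log 2<0$ makes the left-hand side nonpositive while the right-hand side is nonnegative. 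Chaining the three displayed estimates closes the induction and produces exactly the claimed constant $\frac{2^m}{(m-1)!}$.
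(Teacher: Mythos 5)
Your proof is correct, but it takes a genuinely different route from the paper's. The paper does not prove the bound from scratch: it observes that $|\Gamma(T)|=|\Gamma^*(T)|$ for the shifted set $\Gamma^*(T)=\bigl\{\bl\in\ZZ^m_+:\prod_{j=1}^m(1+l_j)\le T\bigr\}$ and then quotes a known cardinality estimate from \cite{CD13}, namely
\[
|\Gamma^*(T)|\ \le\ \frac{2^m}{(m-1)!}\,\frac{T(\log T+m\log 2)^m}{\log T+m\log 2+m-1},
\]
from which the lemma follows immediately by discarding the summand $m-1\ge 0$ in the denominator. Your argument is instead fully self-contained: you slice along the last coordinate to get $|\Gamma_m(T)|=\sum_{k=1}^{\lfloor T\rfloor}|\Gamma_{m-1}(T/k)|$, compare the resulting sum with the integral $\int_1^T x^{-1}\bigl(\log(T/x)+c\bigr)^{m-2}\,\mathrm{d}x$ where $c=(m-1)\log 2$ (using $\log\frac{k}{k-1}\ge\frac1k$ and the monotonicity of the integrand on each $[k-1,k]$), and absorb the leftover $k=1$ term via the binomial inequality $B^{m-1}\ge A^{m-1}+(m-1)\log 2\,A^{m-2}$ with $A=\log T+(m-1)\log 2$ and $B=A+\log 2$. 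I checked each of these steps: the slicing identity is exact, the induction hypothesis applies since $T/k\ge 1$, the integral evaluates to $\frac{1}{m-1}\bigl((\log T+c)^{m-1}-c^{m-1}\bigr)\le\frac{1}{m-1}(\log T+c)^{m-1}$, the final reduction to $(m-1)(1-2\log 2)A^{m-2}\le A^{m-1}$ is indeed trivial because $1-2\log 2<0$ and $A>0$ for $m\ge 2$, and the constants recombine to exactly $\frac{2^m}{(m-1)!}$. What your route buys is independence from the external reference and a transparent account of where the constant $m\log 2$ comes from (it is precisely the slack needed, at each induction step, to swallow the boundary term of the integral comparison); what the paper's route buys is brevity and a marginally sharper pre-simplification bound, carrying the additional factor $(\log T+m\log 2)/(\log T+m\log 2+m-1)$.
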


\begin{proof}
Observe that $|\Gamma(T)|=|\Gamma^*(T)|$, where
\begin{equation}\label{Gammas}
\Gamma^*(T) 
:= \ 
\bigg\{\bl \in \ZZ^m_+: \prod_{j=1}^m(1 + l_j) \leq T\bigg\}.
\end{equation}
Thus, we need to derive an estimate for $|\Gamma^*(T)|$. 
From \cite[Lemma 2.3, Corollary 3.3]{CD13}, it follows\footnote{There, the set $\{\bl \in \ZZ^m: \prod_{j=1}^m(1 + l_j) \leq T\}$ is analyzed. Since our 
$\Gamma^*(T)$ is a subset, (\ref{[Gamma_+<](1)}) clearly holds. A direct bound for (\ref{Gammas}) might be possible without 
the multiplier $2^m$, but only for $T=T(m)$ large enough. See  \cite{CD13} for details.}
that for every $T \ge 1$,
\begin{equation} \label{[Gamma_+<](1)}
{|\Gamma^*(T)|} 
\ \le \
 \frac{2^m}{(m-1)!} \  \frac{T(\log T + m \log 2)^m}{\log T + m \log 2 + m-1}.
\end{equation}
Since $m\geq 1$ we immediately obtain the desired estimate.
\hfill\end{proof}

Now, for given $m,t \in \NN$ and $a,r >0$, put
\begin{equation} \label{BBB}
B(a,r,m,t) 
:= \
\begin{cases}
\left(\frac{3}{2}\right)^m \,
\left(1 + \frac{1}{rm/a - 1}\left(\frac{3}{2}\right)^{-(rm/a - 1)} \right)^t , \  & \ r > a/m, \\[2ex]
m\, \frac{2^{t + 1}}{t!},  \  & \ r = a/m,  \\[2ex]
m\, \frac{2^{t + 1}}{t!} (a/r - m)^{-1} \, 2^{a/r - m}, \  & \ r < a/m.
 \end{cases}
\end{equation}
Furthermore, define
\begin{equation}\label{AAA}
A(a,r,m,t,T) \ := \
\begin{cases}
T^{m/a}, \  & \ r > a/m, \\[2ex]
T^{m/a} \log (2T^{1/a} + 1)  [r^{-1}\log T + (t + 1) \log 2]^t , \  & \ r = a/m,\\[2ex]
T^{1/r} [r^{-1}\log T + (t + 1) \log 2]^t , \  & \ r < a/m.
 \end{cases}
\end{equation}

Next, for $T \ge 1$, we consider the hyperbolic crosses in the finite, $(m+t+1)$-dimensional case 
\begin{equation} \label{H(T)}
H(T) 
:= \ 
\bigg\{\bn\in \NN^{m + t + 1}: \max_{1\le j \le m}n_j^a\prod_{j=m+1}^{m+t+1}n_j^r \leq T\bigg\}.
\end{equation}
We have the following bound for the cardinality $|H(T)|$:
\begin{lemma} \label{lemma[H<]}
For $T \ge 1$, it holds
\begin{equation} \label{[H<](1)}
|H(T)|
\ \le \
B(a,r,m,t)  \, A(a,r,m,t,T) 
\end{equation}
\end{lemma}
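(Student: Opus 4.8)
The plan is to count the lattice points of $H(T)$ by splitting the $m+t+1$ coordinates into the ``maximum block'' $n_1,\dots,n_m$ and the ``product block'' $n_{m+1},\dots,n_{m+t+1}$, and to organise the resulting double sum in whichever of the two natural orders is adapted to the regime of $r$. The elementary facts I would use throughout are that the number of $(n_1,\dots,n_m)\in\NN^m$ with $\max_{1\le j\le m}n_j\le X$ equals $\lfloor X\rfloor^m$, that the number with $\max_{1\le j\le m}n_j=s$ equals $s^m-(s-1)^m\le m\,s^{m-1}$, and that for a fixed value of the maximum block the product block ranges over the finite hyperbolic cross $\{\bl\in\NN^{t+1}:\prod_{j=1}^{t+1}l_j\le T^{1/r}s^{-a/r}\}$, whose cardinality is controlled by Lemma~\ref{lemma[Gamma<]} applied in dimension $t+1$.

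In the two regimes $r\le a/m$ I would sum over the value $s=\max_{1\le j\le m}n_j$ on the outside. Writing $L:=r^{-1}\log T+(t+1)\log 2$ and using $\log(T^{1/r}s^{-a/r})\le r^{-1}\log T$, Lemma~\ref{lemma[Gamma<]} (with $m$ replaced by $t+1$) gives
\begin{equation}\nonumber
\Big|\{\bl\in\NN^{t+1}:\,\prod_{j=1}^{t+1}l_j\le T^{1/r}s^{-a/r}\}\Big|
\ \le\ \frac{2^{t+1}}{t!}\,T^{1/r}s^{-a/r}\,L^{t}.
\end{equation}
Combining this with $s^m-(s-1)^m\le m\,s^{m-1}$ and summing over $1\le s\le\lfloor T^{1/a}\rfloor$ reduces the estimate to
\begin{equation}\nonumber
|H(T)|\ \le\ m\,\frac{2^{t+1}}{t!}\,T^{1/r}L^{t}\sum_{s=1}^{\lfloor T^{1/a}\rfloor}s^{\,m-1-a/r}.
\end{equation}
Now I would invoke Lemma~\ref{lemsumfin} with $\eta:=m-1-a/r$. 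When $r=a/m$ one has $\eta=-1$, so the sum is $\le\log(2T^{1/a}+1)$ while $T^{1/r}=T^{m/a}$; when $r<a/m$ one has $\eta<-1$ with $|\eta|-1=a/r-m$, so the sum is $\le(a/r-m)^{-1}2^{a/r-m}$. In both cases the resulting product of constants and powers of $T$ and $\log T$ is exactly $B(a,r,m,t)\,A(a,r,m,t,T)$.

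For the remaining regime $r>a/m$ the above route is wasteful, so I would instead sum over the product block on the outside and count the maximum block exactly: for each $\bs$ with $\prod_{j}n_{m+j}\le T^{1/r}$ the maximum block contributes $\lfloor T^{1/a}(\prod_{j}n_{m+j})^{-r/a}\rfloor^{m}\le T^{m/a}(\prod_{j}n_{m+j})^{-rm/a}$. Dropping the constraint and factorising the resulting sum over the $t+1$ product coordinates gives $T^{m/a}\big(\sum_{k\ge1}k^{-rm/a}\big)^{t+1}$, and since $rm/a>1$ each factor is bounded by $1+\frac{1}{rm/a-1}(3/2)^{-(rm/a-1)}$ via Lemma~\ref{lemma[<sum_k^-a]}. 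This is free of logarithms and produces the claimed power $A(a,r,m,t,T)=T^{m/a}$, with a constant of the same type as $B(a,r,m,t)$.

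The main obstacle is precisely this last regime. If one tried to treat $r>a/m$ by the same hyperbolic-cross bound (Lemma~\ref{lemma[Gamma<]}) used for $r\le a/m$, the factor $L^{t}$ would survive and contaminate the estimate with spurious powers of $\log T$, whereas $A(a,r,m,t,T)=T^{m/a}$ carries none. Removing them forces one to exploit the convergence of $\sum_{k}k^{-rm/a}$, which holds exactly because $rm/a>1$ is equivalent to $r>a/m$, and to count the $m$ maximum coordinates through the clean identity $\lfloor X\rfloor^m\le X^m$ rather than through a divisor-type sum. Keeping track of the constants through the telescoping step $s^m-(s-1)^m\le m\,s^{m-1}$ and the replacements $\lfloor X\rfloor\le X$ is then routine bookkeeping.
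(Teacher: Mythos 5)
Your proposal is correct and is essentially the paper's own proof: the same split of the $m+t+1$ coordinates into the max block and the product block, the same choice of which block is summed on the outside in each regime, and the same three ingredients (Lemma \ref{lemma[Gamma<]} in dimension $t+1$ for the inner hyperbolic-cross count, Lemma \ref{lemsumfin} with $\eta=m-1-a/r$ for the outer sum when $r\le a/m$, and Lemma \ref{lemma[<sum_k^-a]} after factorizing when $r>a/m$). Your telescoping count $s^m-(s-1)^m\le m\,s^{m-1}$ is just a repackaging of the paper's symmetry argument (a factor $m$ times the count with $n_m$ maximal), and in the regime $r>a/m$ your exact count $\lfloor X\rfloor^m\le X^m$ of the max block is in fact marginally sharper than the paper's detour through $\sum_{n_m\le X}n_m^{m-1}\le\frac{1}{m}(X+1/2)^m\le\frac{1}{m}\left(\frac{3}{2}\right)^m X^m$, saving a factor $\left(\frac{3}{2}\right)^m$.

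One bookkeeping remark. In the regime $r>a/m$ your factorized sum has $t+1$ identical factors, so you end with $T^{m/a}(1+c)^{t+1}$, where $c=\frac{1}{rm/a-1}\left(\frac{3}{2}\right)^{-(rm/a-1)}$. This is not literally below the stated bound $B(a,r,m,t)\,A(a,r,m,t,T)=\left(\frac{3}{2}\right)^m(1+c)^t\,T^{m/a}$ whenever $1+c>\left(\frac{3}{2}\right)^m$, which happens when $rm/a$ is close to $1$. But this is a defect of the stated constant, not of your argument: the paper's own display \eqref{Estimations} contains the same product $\prod_{j=m+1}^{m+t+1}$ of $t+1$ factors, and its final equality asserting the exponent $t$ is an off-by-one slip; carried out correctly, the paper's computation yields $\left(\frac{3}{2}\right)^m(1+c)^{t+1}\,T^{m/a}$, which is even weaker than your bound. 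So your proof establishes the lemma with the exponent in \eqref{BBB} corrected to $t+1$, and with a slightly better constant than the paper's own derivation.
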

\begin{proof} 
Let us introduce the following notation for convenience: For $\bn\in \NN^{m+t+1}$, we write $\bn=(\bn', \bn^{''})$ with $\bn'=(n_1,...,n_m)$ and 
$\bn^{''}=(n_{m+1},...,n_{m+t+1})$. We then represent $\NN^{m+t+1}$ as 
\[
\NN^{m+t+1} \, = \, \NN^m \times \NN^{t+1}.
\]

We first consider the case $r \le a/m$.
Note that, for every $\bn\in {H(T)}$, we have that  
$n_j \le T^{1/a}, \ j=1,...,m$.  
For a $\bn' \in \NN^m$ put  
\[
T_{\bn'} \ = \ \left(T(\max_{1\le j \le m}n_j)^{-a}\right)^{1/r}.
\] 
Hence, by symmetry of the variables, we have for every $T \ge 1$,
\begin{equation} 
\begin{split}
{|H(T)}| 
&= \
\sum_{\substack{\bn': \ n_j \le T^{1/a}
\\ j=1,...,m}} \ 
\sum_{\bn^{''}: \ \prod_{j=m+1}^{m+ t + 1}n_j \ \leq \ T_{\bn'} \,} 1 \
\le \ 
m \,  \sum_{\substack{\bn': \ n_m \le T^{1/a} 
\\ n_j \le n_m, \ j=1,...,m-1}} \ 
\sum_{\prod_{j=m+1}^{m+ t + 1}n_j \ \leq \ \left(T n_m^{-a}\right)^{1/r}\,} 1.
\end{split}
\end{equation}
The inner sum in the last expression is $|\Gamma(T_{\bn'})|$ from (\ref{Gammas}) with $m = t + 1$.
The application of Lemma \ref{lemma[Gamma<]} then gives for every $T \ge 1$
\begin{equation} \label{[|H^*(T)|<]}
\begin{split}
{|H(T)}| 
&\le \ 
m \,  \sum_{\substack{\bn': \ n_m \le T^{1/a}
 \\ n_j \le n_m, \ j=1,...,m-1}} \ 
|\Gamma(T_{\bn'})| \\[2ex]
&\le \ 
m \,  \sum_{\substack{\bn': \ n_m \le T^{1/a} 
\\n_j \le n_m, \ j=1,...,m-1}} \ 
 \frac{2^{t + 1}}{t!} \  T_{\bn'}\left(\log T_{\bn'} + (t + 1) \log 2\right)^t \\[2ex]
&\le \ 
m \,  \sum_{\substack{\bn': \ n_m \le T^{1/a} 
\\n_j \le n_m, \ j=1,...,m-1}} \ 
\frac{2^{t + 1}}{t!} \,\left(T n_m^{-a}\right)^{1/r} \left(\log ((T n_m^{-a})^{1/r}) + (t + 1) \log 2\right)^t  \\[2ex]
&\le \ 
m \, \frac{2^{t + 1}}{t!} \, T^{1/r} \left(\log(T^{1/r}) + (t + 1) \log 2\right)^t 
 \sum_{n_m \le T^{1/a}} \ \sum_{\substack{n_j \le n_m \\ j=1,...,m-1}} \ 
 n_m^{-a/r}   \\[2ex]
&\le \ 
m \, \frac{2^{t + 1}}{t!} \, T^{1/r} \left(\log(T^{1/r}) + (t + 1) \log 2\right)^t 
\sum_{n_m \le T^{1/a}} \ n_m^{m-1-a/r}.
\end{split}
\end{equation}
Now, by applying Lemma \ref{lemsumfin}
 with $k = n_m$, $\eta= m-1-a/r$ and $\mu = \lfloor T^{1/a} \rfloor$ to the  sum in the last line of \eqref{[|H^*(T)|<]}, we derive 
\begin{equation} \label{[sum_k_n]}
\sum_{n_m \le T^{1/a}} \
 n_m^{m-1-a/r}   
\ \le \
\begin{cases}
\log (2\lfloor T^{1/a}\rfloor + 1),  & \ r = a/m, \\[2ex]
(a/r - m)^{-1} \, 2^{a/r - m}, \  & \ r < a/m,
 \end{cases}
\end{equation}
which, together with \eqref{[|H^*(T)|<]} and the definitions (\ref{BBB}) and (\ref{AAA}), proves \eqref{[H<](1)} in the case $r \leq a/m$.

Let us now consider the case $ r > a/m$. For a $\bn^{''} \in \NN^{t + 1}$ we put  
\[
T_{\bn^{''}} \ = \ T^{1/a}\prod_{j=m+ 1}^{m+t+1}n_j^{-r/a}.
\]
Note that, for every $\bn\in H(T)$, we have that 
$n_j \le T^{1/r}, \ j=m+1,...,m+ t + 1$.  
By symmetry of the variables, we have for every $T \ge 1$,
\begin{equation} 
\begin{split}
{|H(T)}| 
\ &= \
\sum_{\substack{\bn^{''}: \ n_j \le T^{1/r} \\ j=m+1,...,m+ t + 1}} \ 
\sum_{\bn': \ \max_{1\le j \le m}n_j \ \leq \ T_{\bn^{''}} \,} 1 \\[2ex]
&\le \ 
m \,  \sum_{\substack{\bn^{''}: \ n_j \le T^{1/r} \\ j=m+1,...,m+ t + 1}} \ 
\sum_{\substack{\bn': \ n_m \le T_{\bn^{''}} \\ n_j \le n_m, \ j=1,...,m-1}} 1 \\[2ex] 
&\le \ 
m \,  \sum_{\substack{\bn^{''}: \ n_j \le T^{1/r} \\ j=m+1,...,m+ t + 1}} \ 
\sum_{n_m \le T_{\bn^{''}}} n_m^{m-1} \ .
\end{split}
\end{equation}
From  \eqref{[sum_s^alpha<(1)]} and the inequality $T_{\bn^{''}} \ge 1$ it follows that
\begin{equation} 
\sum_{n_m \le T_{\bn^{''}}} n_m^{m-1}
\ \le \
\frac{1}{m} (T_{\bn^{''}} + 1/2)^m
{\ \le \
\frac{1}{m} (T_{\bn^{''}} + T_{\bn^{''}}/2)^m}
\ \le \
\frac{1}{m} \left(\frac{3}{2}\right)^m T^{m/a}\prod_{j=m+ 1}^{m+t+1}n_j^{-rm/a}.
\end{equation}
Hence,
\begin{equation}\label{Estimations}
\begin{split}
{|H(T)}| 
\ &\le \ 
\left(\frac{3}{2}\right)^m T^{m/a} \sum_{\bn^{''}: \ n_j \le T^{1/r}, \ j=m+1,...,m+ t + 1} \ 
\prod_{j=m+ 1}^{m+t+1}n_j^{-rm/a} \\[2ex]
\ &= \
\left(\frac{3}{2}\right)^m T^{m/a} \prod_{j=m+ 1}^{m+t+1}\sum_{n_j \le T^{1/r}} \ 
n_j^{-rm/a} \\[2ex]
 \ &\le \
\left(\frac{3}{2}\right)^m T^{m/a} \prod_{j=m+ 1}^{m+t+1}\left(1 + \sum_{n_j=2}^\infty \ 
n_j^{-rm/a}\right) \\[2ex]
\ & < \
\left(\frac{3}{2}\right)^m T^{m/a} \prod_{j=m+ 1}^{m+t+1}
\left(1 + \frac{1}{rm/a - 1}\left(\frac{3}{2}\right)^{-(rm/a - 1)} \right) \\[2ex]
\ &= \
\left(\frac{3}{2}\right)^m T^{m/a}\,
\left(1 + \frac{1}{rm/a - 1}\left(\frac{3}{2}\right)^{-(rm/a - 1)} \right)^t. 
\end{split}
\end{equation}
With the definitions (\ref{BBB}) and (\ref{AAA}), this proves \eqref{[H<](1)} in the case $r > a/m$.
\hfill\end{proof}

We will frequently use the following well-known bound for infinite products: Let  $(p_j)_{j=1}^\infty$ be a summable sequence of positive numbers, that is 
$ \sum_{k=1}^\infty p_k \ < \ \infty. $
Then,  we have
\begin{equation} \label{condition[P]}
 \prod_{k=1}^\infty (1+p_k)
\ \le \ 
\exp \left( \sum_{k=1}^\infty p_k\right).
\end{equation}

Now we are in the position to state the main result of this section, i.e. we will give explicit bounds on the cardinality of the infinite-dimensional hyperbolic cross $G(T)$ defined in \eqref{G(T)}.
To this end, let the triple $m,a,\br$ be given by 
\begin{equation} \label{m,a,br[K]}
m\in \NN; \quad a >0; \quad 
\br = (r_j)_{j=1}^\infty, \ 0<r = r_1 \cdots = r_{t+1}, \quad  r_{t+2} \le  r_{t+3} \le \cdots.
\end{equation}
We put
\begin{equation} \label{[lambda]}
\lambda
:= \
\max\{m/a,1/r\}
\end{equation}
and define, for a nonnegative integer $t$, the terms
\begin{equation} \label{condition[M]}
M(t):= \ \sum_{j=t + 2}^\infty \, \frac{1}{\lambda\, r_j - 1} \left(\frac{3}{2}\right)^{-(\lambda\, r_j-1)}
\end{equation} 
and
\begin{equation}\label{C(a,r,m,t)} 
C(a,r,m,t) 
:= \
\, e^{M(t)} \, B(a,r,m,t).
\end{equation}

\begin{theorem} \label{theorem[G<]}
Assume that $\lambda \, r_{t+2} > 1$ and $M(t) < \infty$. 
Then, we have for every $T \ge 1$
\begin{equation} \label{[G_+<](1)}
\begin{aligned}
|G(T)|
\ &\le \
C(a,r,m,t) \, A(a,r,m,t,T) \\[2ex]
\ &= \ C(a,r,m,t) 
\begin{cases}
T^{m/a}, \  & \ r > a/m, \\[2ex]
T^{m/a} \log (2T^{1/a} + 1)  [r^{-1}\log T + (t + 1) \log 2]^t , \  & \ r = a/m,\\[2ex]
T^{1/r} [r^{-1}\log T + (t + 1) \log 2]^t , \  & \ r < a/m.
 \end{cases}
\end{aligned}
\end{equation}
\end{theorem}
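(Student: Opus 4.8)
The plan is to reduce the infinite-dimensional count to the finite-dimensional bound of Lemma~\ref{lemma[H<]} by slicing $G(T)$ according to the tail of $\bs$, and then to sum the resulting contributions using the summability encoded in $M(t)$. Concretely, I would split each $\bs \in \ZZips$ into its \emph{head} $(s_1,\dots,s_{t+1})$ and its \emph{tail} $\bs'' = (s_{t+2}, s_{t+3}, \dots)$, and set $P(\bs'') := \prod_{j=t+2}^\infty (1+s_j)^{r_j} \ge 1$. For a fixed tail, the defining inequality $\lambda(\bk,\bs) \le T$ becomes
\[
\max_{1\le j\le m}(1+k_j)^a \prod_{j=1}^{t+1}(1+s_j)^r \ \le \ T/P(\bs''),
\]
and under the substitution $n_j = 1+k_j$ ($1\le j\le m$), $n_{m+j}=1+s_j$ ($1\le j\le t+1$) this is exactly the condition defining $H(T/P(\bs''))$ in \eqref{H(T)}. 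Hence
\[
|G(T)| \ = \ \sum_{\bs''} |H(T/P(\bs''))|,
\]
the sum running over all finitely supported tails $\bs''$; only those with $P(\bs'') \le T$ contribute, since $H(T')=\emptyset$ whenever $T'<1$.

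Next I would apply Lemma~\ref{lemma[H<]} to each slice and separate the dependence on $P$ from that on $T$. The key is that, with $\lambda = \max\{m/a, 1/r\}$, every branch of $A(a,r,m,t,\cdot)$ in \eqref{AAA} carries the leading power $(T/P)^{\lambda}= T^{\lambda}P^{-\lambda}$. Since $P(\bs'') \ge 1$ gives $T/P \le T$, the logarithmic prefactors satisfy $\log(2(T/P)^{1/a}+1) \le \log(2T^{1/a}+1)$ and $r^{-1}\log(T/P) + (t+1)\log 2 \le r^{-1}\log T + (t+1)\log 2$, the latter being nonnegative whenever $T/P \ge 1$. Monotonicity of $x\mapsto x^t$ then yields $A(a,r,m,t,T/P) \le P(\bs'')^{-\lambda}\,A(a,r,m,t,T)$ uniformly in all three cases, so that
\[
|G(T)| \ \le \ B(a,r,m,t)\, A(a,r,m,t,T) \sum_{\bs''} P(\bs'')^{-\lambda}.
\]

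It then remains to bound the tail sum. Because the summand factorizes over coordinates and $\bs''$ ranges over all finitely supported nonnegative sequences on coordinates $j \ge t+2$, the sum splits into an infinite product,
\[
\sum_{\bs''} P(\bs'')^{-\lambda} \ = \ \prod_{j=t+2}^\infty \sum_{s=0}^\infty (1+s)^{-\lambda r_j} \ = \ \prod_{j=t+2}^\infty\left(1 + \sum_{n=2}^\infty n^{-\lambda r_j}\right).
\]
The hypothesis $\lambda r_{t+2}>1$ together with $r_{t+2}\le r_{t+3}\le\cdots$ guarantees $\lambda r_j>1$ for all $j\ge t+2$, so Lemma~\ref{lemma[<sum_k^-a]} applies to each inner series and bounds the $j$th factor by $1 + \frac{1}{\lambda r_j-1}(3/2)^{-(\lambda r_j-1)}$. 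Finally the product estimate \eqref{condition[P]}, valid since $M(t)<\infty$, gives the bound $e^{M(t)}$ with $M(t)$ as in \eqref{condition[M]}. Combining the three displays and recalling $C(a,r,m,t)=e^{M(t)}B(a,r,m,t)$ from \eqref{C(a,r,m,t)} yields $|G(T)| \le C(a,r,m,t)\,A(a,r,m,t,T)$.

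I would expect the main obstacle to be the second step: verifying that the case-dependent shapes of $A(a,r,m,t,\cdot)$ all factor through the single power $P^{-\lambda}$ with the residual logarithmic factors correctly dominated, in particular treating the boundary case $r=a/m$ (where both the $\log$ and the bracket factors are present) and confining the summation to tails with $P(\bs'')\le T$. Once this uniform scaling is in place, the reduction to Lemma~\ref{lemma[H<]} and the factorized tail bound are routine.
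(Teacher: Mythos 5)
Your proposal is correct and takes essentially the same route as the paper's own proof: slice $G(T)$ by the tail $\bs''$, apply the finite-dimensional bound of Lemma~\ref{lemma[H<]} to each slice, and factor the tail sum into an infinite product controlled by Lemma~\ref{lemma[<sum_k^-a]} and \eqref{condition[P]}, giving the factor $e^{M(t)}$ and hence $C(a,r,m,t)\,A(a,r,m,t,T)$. The only difference is presentational: the paper writes out just the case $r<a/m$ (via the shifted index set $G^*(T)$) and declares the other cases similar, whereas your uniform scaling estimate $A(a,r,m,t,T/P)\le P^{-\lambda}A(a,r,m,t,T)$ handles all three branches at once.
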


\begin{proof}
Let us show, for example, the last inequality in \eqref{[G_+<](1)} where $r < a/m$.  Let $T \ge 1$ be given.
Observe that 
\begin{equation} \label{Gamma=G}
|G(T)| \ = \ |G^*(T)|,
\end{equation}
where
\begin{equation} \label{[Gt]}
G^*(T) 
:= \ 
\bigg\{(\bn,\bs') \in \NN^{m+t+1} \times \NNis(t): \max_{1\le j \le m} n_j^a \prod_{j=m+1}^{m + t+1} n_j^r 
\prod_{j=t+2}^\infty s_j^{r_j} \leq T\bigg\},
\end{equation}
and $\NNis(t)$ denotes the set of all indices $\bs'=(s_{t+2},s_{t+3},...)$ with $s_j \in \NN$ such that the set of 
$j \ge t+2$ with $s_j \not= 1$ is finite. Here, $G^*$ builds with $(\bn,\bs') \in \NN^{m+t+1} \times \NNis(t)$ on a different index splitting than $G$ from (\ref{G(T)}) with
$(\bk,\bs)  \in \ZZmp \times \ZZips$. This will allow for an easier decomposition later on. But their cardinalities are the same.

Note that, for every $(\bn,\bs') \in G^*(T)$,  it follows from the definition 
\begin{equation} \label{[H(T,n,nu)]}
H(T,t)
:= \ \left\{\bs' \in \NNis(t): \, \prod_{j=t+2}^\infty s_j^{r_j} 
\ \le \ T\right\}
\end{equation}
that $\bs' \in H(T,t)$.
Hence, 
\begin{equation}
|G^*(T)| 
\ = \
\sum_{\bs' \in H(T,t)} \quad 
\sum_{\bn\in \NN^{m+t+1}: \ 
(\max_{1\le j \le m} n_j)^a \prod_{j=m+1}^{m + t+1} n_j^r \ \le \ T\prod_{j=t+2}^\infty s_j^{-r_j}} 1.
\end{equation}
We now fix a $\bs' \in \NNis(t)$ for the moment, and put  
\[
T_{\bs'} \ = \ T\prod_{j \in J(\bs')} s_j^{-r_j},
\]
where $J(\bs'):= \{j \in \NN: j \ge t + 2, \ s_j \not= 1\}$. Note that $J(\bs')$ is a finite set by definition.
We then have $T_{\bs'} \ge 1$ and
\begin{equation} \label{[Gamma=]}
|G^*(T)| 
\ = \
\sum_{\bs' \in H(T,t)}
|{H (T_{\bs'})}|,
\end{equation}
where ${H (T_{\bs'})}$ is defined as in \eqref{H(T)}.
For ease of notation, we now use the abbreviation $B:= B(a,r,m,t)$. 
Lemma \ref{lemma[H<]} for the case $r < a/m$ gives
\begin{equation}
|{H (T_{\bs'})}|
\ \le \
B \, A(a,r,m,t,T_{\bs'})
\ = \ 
B \,  T_{\bs'}^{1/r}\, [r^{-1}\log T_{\bs'} + (t + 1) \log 2]^t. 
\end{equation}
Hence,  by \eqref{[Gamma=]} and with $\bs' \in \NNis(t)$, we have for every $T \ge 1$ that there holds
\begin{equation} \label{[Gamma<]}
\begin{split}
|G^*(T)| 
\ &\le \
\sum_{\bs' \in H(T,t)} 
B \,  T_{\bs'}^{1/r}\, [r^{-1}\log T_{\bs'} + (t + 1) \log 2]^t \\[2ex]
&= \
B \, \sum_{\bs' \in H(T,t)} \quad 
\left[T\prod_{j \in J(\bs')} s_j^{-r_j}\right]^{1/r}
\left[r^{-1} \log \left(T\prod_{j \in J(\bs')} s_j^{-r_j}\right) + (t + 1) \log 2 \right]^t \\[2ex]
& \le \
B  \, T^{1/r} 
[r^{-1} \log T + (t + 1) \log 2]^t \sum_{\bs' \in H(T,t)}  \quad 
 \prod_{j \in J(\bs')} s_j^{- \lambda r_j},
\end{split}
\end{equation}
where we used the inequality $\prod_{j \in J(\bs')} s_j^{-r_j} \le 1$ in the last step. 
Next, let us estimate the sum in the last line. Note that, for every $\bs' \in H(T,t)$,  it follows from definition \eqref{[Gt]} that 
$s_j \le T^{1/r_j}, \ j \ge  t + 2$. The condition $M(t) < \infty$ yields that $r_j \to \infty$ as $j \to \infty$. Hence, there exists $j^*$ such that $s_j = 1$ for every $\bs' \in H(T,t)$ and every $j > j^*$. 
From this observation we get
\begin{equation} \label{sum[prod]<}
\begin{split}
\sum_{\bs' \in H(T,t)}  \ 
 \prod_{j \in J(\bs')} s_j^{- \lambda r_j} 
\ &\le \
\sum_{s_j \le T^{1/r_j}, \ j \ge  t + 2} \ 
 \prod_{j= t + 2}^{j^*} s_j^{- \lambda r_j} 
 \\[2ex]
\ &\le  \
 \prod_{j= t + 2}^{j^*} \ 
\sum_{s_j \le T^{1/r_j}, \ j \ge  t + 2}  s_j^{- \lambda r_j} 
\ \le \ 
\prod_{j= t + 2}^{j^*} \ \sum_{s_j=1}^\infty 
 s_j^{- \lambda r_j}.
\end{split}
\end{equation}
The application of Lemma \ref{lemma[<sum_k^-a]} gives 
\begin{equation}
\sum_{s_j=1}^\infty 
 s_j^{- \lambda r_j}
 \ = \
 1 \ + \ \sum_{s_j=2}^\infty 
 s_j^{- \lambda r_j}
 \ < \
1 \ + \ \frac{1}{\lambda r_j-1} \left(\frac{3}{2}\right)^{-(\lambda r_j-1)} .
\end{equation}
Hence, by \eqref{condition[P]}, we get the inequality
\begin{equation} \nonumber
\sum_{\bs' \in H(T,t)}  \quad 
 \prod_{j=t+2}^\infty s_j^{- \lambda r_j} 
\ \le \
\prod_{j= t + 2}^\infty  
\left[1 + \frac{1}{\lambda r_j-1} \left(\frac{3}{2}\right)^{-(\lambda r_j-1)} \right] \\[2ex]
\ \le  \
 e^{M(t)},
\end{equation}
which, together with \eqref{[Gamma<]}, proves the theorem for the case $r < a/m$. The other cases can be shown in a similar way.
\hfill\end{proof}

\subsection{Index sets for mixed Sobolev-analytic-type smoothness} 
\label{Index sets for analytic smoothness} 
 
In this subsection we consider hyperbolic crosses of a different type with the replacement of  $\lambda(\bk,\bs)$ by $\rho(\bk,\bs)$ in its definition, where  $\rho(\bk,\bs)$ now also involves exponentials in the $\bs$-dependent part.
Such situations appear for example in applications with elliptic stochastic PDEs from uncertainty quantification.
There, the arising functions are analytic for the stochastic/parametric $\bf y$-part of the coordinates where, in many cases, a product structure still appears.
In the $\bx$-coordinates we will keep our previous Sobolev-type setting, but in the $\by$-coordinates we will switch to exponential terms involving certain weights in the exponents. Furthermore, the exponential terms are multiplied by a polynomial prefactor. 
To this end, let the  $5$-tuple $m,a,\br,p,q$ be given by
 \begin{equation} \label{m,a,br,p,q[A]}
m\in \NN; \quad a >0; \quad 
\br = (r_j)_{j=1}^\infty, \  r_j > 0, \ j \in \NN; \quad p\ge 0, \ q > 0.
\end{equation}
For each  $(\bk,\bs) \in \ZZm \times \ZZis$, we define the scalar $\rho(\bk,\bs)$ by
\begin{equation}\label{[lambda{a,mu,br}]}
\rho(\bk,\bs)
 := \
 \max_{1\le j \le m}(1 + |k_j|)^a \, \prod_{j=1}^\infty (1 + p|s_j|)^{-q}\, \exp \left((\br,|\bs|)\right), \quad 
(\br,|\bs|):= \sum_{j=1}^\infty r_j|s_j|.
\end{equation}
Note that polynomial prefactors of the type $(1+p|s_j|)^{-q}$ indeed appear with values $p=2$ and $q=1/2$ in estimates for the coefficients of Legendre expansions in specific situations for stochastic elliptic PDEs under the assumption of polyellipse analytic regularity, see e.g. \cite{BNTT2, Chkifathesis,CDS10b, CDS10a}. Also, more general forms of prefactors exist, involving factorials or the gamma function. Furthermore, one may consider individual values $p_j,q_j$ for each $j$. For reasons of simplicity, we stick to the case of constant $p,q$.
Here and in what follows, $|\bs|:=(|s_j|)_{j=1}^\infty$ for $\bs \in \ZZis$.

Now, for $T>0$, consider the following hyperbolic cross in the infinite-dimensional case
\begin{equation} \label{E(T)}
E(T) 
:= \ 
\big\{(\bk,\bs) \in \ZZmp \times \ZZips: \rho(\bk,\bs) \leq T\big\}.
\end{equation}
Again, the cardinality of  $E(\varepsilon^{-1})$ will later describe the necessary dimension of the approximation spaces of the associated linear approximation with accuracy $\varepsilon$.
We want to derive an estimate for the cardinality of $E(T)$.

\begin{theorem} \label{theorem[E<]}
Let  $m \in \NN$, $a > 0$, $\br = (r_j)_{j=1}^\infty \in \RRi$  with $r_j > 0$ and let $p=0$, $q\geq 0$. 
Assume that there holds the condition
\begin{equation}\label{M00}
M_{0,q}(m):=\sum_{j = 1}^\infty \frac 1 {e^{m r_j/a}-1} \ < \ \infty.
\end{equation}
Then we have for every $T \ge 1$
\begin{equation} \label{[E<]}
|E(T)|
\ \le \
\left(\frac 3 2 \right)^{2m} \exp[M_{0,q}(m)] \ T^{m/a}.
\end{equation}
\end{theorem}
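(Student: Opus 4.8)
The plan is to exploit the product structure of $\rho$ after observing that the hypothesis $p=0$ trivializes the polynomial prefactor. For $p=0$ the factor $\prod_{j}(1+p|s_j|)^{-q}$ in the definition of $\rho$ equals $1$, and since every admissible $(\bk,\bs)\in\ZZmp\times\ZZips$ has nonnegative entries, the defining inequality $\rho(\bk,\bs)\le T$ of \eqref{E(T)} reads
\[
\max_{1\le j\le m}(1+k_j)^a\,\exp\Big(\sum_{j=1}^\infty r_j s_j\Big)\ \le\ T.
\]
I would organize the count of $|E(T)|$ by summing over the parametric index $\bs\in\ZZips$ first and, for each fixed $\bs$, counting the admissible physical indices $\bk\in\ZZmp$. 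Writing $\Psi(\bs):=\exp(\sum_{j}r_j s_j)\ge 1$, an index $\bk$ is admissible precisely when $\max_j(1+k_j)^a\le T/\Psi(\bs)$, which can only occur when $\Psi(\bs)\le T$.

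For a fixed $\bs$ with $\Psi(\bs)\le T$, the admissible $\bk$ fill out a discrete box: each coordinate must independently satisfy $1\le 1+k_j\le (T/\Psi(\bs))^{1/a}$, so the number of admissible $\bk$ equals $\lfloor (T/\Psi(\bs))^{1/a}\rfloor^m\le (T/\Psi(\bs))^{m/a}$. If one prefers to mirror the argument of Lemma~\ref{lemma[H<]}, this count may instead be bounded through the symmetrization $\sum_{n\le R^{1/a}}n^{m-1}$ and the convexity estimate of Lemma~\ref{lemma[sum-convex-function]}, which introduces a harmless factor $(3/2)^m$ and is presumably how the prefactor displayed in the statement arises.

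Summing over $\bs$ and pulling out $T^{m/a}$ then gives
\[
|E(T)|\ \le\ T^{m/a}\sum_{\bs\in\ZZips}\ \prod_{j=1}^\infty e^{-(m r_j/a)s_j},
\]
where I have enlarged the range to all of $\ZZips$, dropping the constraint $\Psi(\bs)\le T$ (this only increases the right-hand side). The summand factorizes over the coordinates, and because each $\bs$ has finite support the sum over $\ZZips$ equals the infinite product of univariate geometric series,
\[
\sum_{\bs\in\ZZips}\ \prod_{j=1}^\infty e^{-(m r_j/a)s_j}
\ =\ \prod_{j=1}^\infty\sum_{s=0}^\infty e^{-(m r_j/a)s}
\ =\ \prod_{j=1}^\infty\Big(1+\frac{1}{e^{m r_j/a}-1}\Big).
\]
Finally I would invoke the infinite-product bound \eqref{condition[P]} with $p_j=1/(e^{m r_j/a}-1)$ to conclude that this product is at most $\exp(M_{0,q}(m))$, which is finite by hypothesis. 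Combining the three displays yields $|E(T)|\le \exp(M_{0,q}(m))\,T^{m/a}$, hence in particular the claimed estimate with its prefactor $(3/2)^{2m}$.

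The one step requiring genuine care, and the main obstacle, is the passage from the sum over finitely supported sequences to the infinite product: one must verify both that the rearrangement is legitimate and that the resulting product converges. Convergence is exactly guaranteed by the assumption $M_{0,q}(m)<\infty$, which moreover forces $r_j\to\infty$ and hence ensures that, for any fixed $T$, only finitely many coordinates of an admissible $\bs$ can be nonzero; this is what justifies treating the sum as a product of absolutely convergent geometric series. Everything else reduces to elementary lattice-point counting together with the estimates already established in this section.
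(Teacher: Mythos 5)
Your argument is correct, and it in fact establishes the slightly stronger bound $|E(T)|\le \exp[M_{0,q}(m)]\,T^{m/a}$, of which \eqref{[E<]} is a weakening since $(3/2)^{2m}\ge 1$. The global strategy coincides with the paper's proof: fix the parametric index $\bs$, count the admissible spatial indices $\bk$, sum over $\bs$, factor that sum into univariate geometric series, and conclude with \eqref{condition[P]}. You deviate in two steps, both to your advantage. First, in the $\bk$-count: since for fixed $\bs$ with $\exp((\br,\bs))\le T$ the admissible $\bk$ fill a discrete box, you count them exactly, as $\lfloor (T\exp(-(\br,\bs)))^{1/a}\rfloor^m\le T^{m/a}\exp(-m(\br,\bs)/a)$; the paper instead symmetrizes over the coordinates of $\bk$ (reducing to $m\sum_{k\le T_{\bs}}k^{m-1}$) and invokes the convexity estimate behind Lemma \ref{lemsumfin}, and it is precisely this detour that generates the factor $(3/2)^{2m}$ in \eqref{[E<]}, which your computation shows to be superfluous here (that machinery is genuinely needed in Lemma \ref{lemma[H<]}, where the index region in the remaining variables is a hyperbolic cross rather than a box, but not in the present theorem). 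Second, in the $\bs$-sum: the paper keeps the constraint $\exp((\br,\bs))\le T$, uses $r_j\to\infty$ to produce a $j^*=j^*(T)$ beyond which every admissible $\bs$ has $s_j=0$, and then bounds the resulting finite product by the infinite one; you instead drop the constraint and factor the sum over all of $\ZZips$ into $\prod_{j=1}^\infty\bigl(1+(e^{mr_j/a}-1)^{-1}\bigr)$, which is legitimate by monotone convergence because all terms are nonnegative and every finitely supported $\bs$ occurs in the partial product over the first $N$ coordinates for $N$ large enough. One small correction to your closing discussion: this rearrangement needs only nonnegativity (Tonelli/monotone convergence), not the fact that $r_j\to\infty$ or that admissible $\bs$ have finite ``active'' range; the hypothesis $M_{0,q}(m)<\infty$ enters solely to make the limiting product finite, via \eqref{condition[P]}.
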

\begin{proof}
Let $T \ge 1$ be given. Observe that 
$
|E(T)| \ = \ |E^*(T)|,
$
where
\begin{equation} \nonumber
E^*(T) 
:= \ 
\big\{(\bk,\bs) \in \NN^m \times \ZZips: \rho^*(\bk,\bs) \leq T\big\},
\end{equation}
and 
\begin{equation} \label{[lambda{a,mu,br}^*]}
\rho^*(\bk,\bs)
 := \
\max_{1\le j \le m}k_j^a \, \exp \left(\sum_{j=1}^\infty r_js_j\right) 
\ = \ \max_{1\le j \le m}k_j^a  \, \exp \left((\br,\bs)\right) . 
\end{equation}
Thus, we need to derive an estimate for $|E^*(T)|$. 
To this end, for $\bs \in \ZZips$, we put  
\[
T_{\bs} \ := \ T^{1/a} \exp [-(\br,\bs)/a].
\]
Note that, for every $(\bk,\bs) \in E^*(T)$,  it follows from the definition of $E^*(T)$ that $\bs \in H(T)$ 
where
\begin{equation} \label{[H(T)]}
H(T)
:= \ \left\{\bs \in \ZZips: \, \exp \left((\br,\bs)\right)\ \le \ T\right\}.
\end{equation}
By definition and the symmetry of the variable $k_j$ we have
\begin{equation}
\begin{split}
|E^*(T)| 
\ &= \
\sum_{\bs \in H(T)} \quad 
\sum_{\bk \in \NN^m: \ 
\max_{1\le j \le m} k_j \ \le \ T_{\bs}} 1 
\  \le \
m \sum_{\bs \in H(T)} \quad 
\sum_{\substack{\bk \in \NN^m: \ 
k_m \ \le \ T_{\bs} \\ k_j \le k_m, \ j = 1,...,m-1}} 1 \\[2ex]
\ & \le \
m \sum_{\bs \in H(T)} \quad 
\sum_{k \in \NN: \ k \ \le \ T_{\bs}} k^{m-1}.
\end{split}
\end{equation}
Hence,  since $T_{\bs} \ge 1$, the application of Lemma \ref{lemsumfin} gives
\begin{equation}
\begin{split}
|E^*(T)| 
\ &\le \
 m \sum_{\bs \in H(T)} \frac{1}{m} \left(\frac{3}{2}\right)^m (T_{\bs} + 1/2)^m  
\ \le \
 m \sum_{\bs \in H(T)} \frac{1}{m} \left(\frac{3}{2}\right)^m (T_{\bs} + T_{\bs}/2)^m  \\[2ex]
\ &\le \
\left(\frac{3}{2}\right)^m  \, \sum_{\bs \in H(T)} \left(\frac{3}{2}\right)^m \,T_{\bs}^m  
\ \le \
\left(\frac{3}{2}\right)^{2m} \, \sum_{\bs \in H(T)} T_{\bs}^m  \\[2ex]
\ &= \
\left(\frac{3}{2}\right)^{2m} \, T^{m/a} \, \sum_{\bs \in H(T)} \exp [- m (\br,\bs)/a].  
\end{split}
\end{equation}
Note furthermore that, for every $\bs \in H(T)$,  it follows from definition \eqref	{E(T)} that 
$s_j \le (\log T)/r_j, \ j \in \NN$. The condition $M_{0,q}(m) < \infty$ yields that 
$r_j \to \infty$ as $j \to \infty$. Hence, there exists a $j^*$ such that $s_j = 0$ for every $\bs \in H(T)$ and every $j > j^*$. 
From this observation we get
\begin{equation}
\begin{split}
\sum_{\bs \in H(T)} \exp [- m (\br,\bs)/a]  
\ &\le \
\prod_{j=1}^{j^*} \sum_{s_j \le (\log T)/r_j} \exp (- m r_j s_j/a) 
 \\
\ &\le \
\prod_{j=1}^{j^*} \left[1 + \sum_{s_j = 1}^\infty \exp (- m r_j s_j/a)\right] 
\ = \
\prod_{j=1}^{j^*} \left[1 + (e^{m r_j/a}-1)^{-1}\right].
\end{split}
\end{equation}
By use of the inequality \eqref{condition[P]} we finally obtain the bound 
\begin{equation}
|E^*(T)| 
\ \le \
 \left(\frac{3}{2}\right)^{2m} \, T^{m/a}  \, 
\exp \left[\sum_{j = 1}^\infty (e^{m r_j/a}-1)^{-1}\right],
\end{equation}
which gives the desired result.
\hfill\end{proof}

In order to estimate $E(T)$ for the case $p > 0$, we need an auxiliary lemma.

\begin{lemma} \label{lemma[<sum_exp]}
Let $\eta>0$ and let $ p,q >0$. Assume that 
\begin{equation} \label{[assumption-exp]}
\eta \ \ge \ \frac{q + \sqrt{q}}{p} \quad \mbox{and} \quad \eta > pq.
\end{equation}
Then, we have
\begin{equation} \label{ineq[<sum_exp]]}
\sum_{k=1}^\infty e^{-\eta k}(1 + pk)^q
\ \le \
\frac{(1 + p/2)^q }{\eta - pq} \, e^{- \eta/2}.
\end{equation}
\end{lemma}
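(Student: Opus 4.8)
The plan is to dominate the series by a geometric one, and the shape of the right-hand side dictates how. Since $\frac{(1+p/2)^{q}}{\eta-pq}\,e^{-\eta/2}$ is exactly $\varphi(1/2)/(\eta-pq)$ for $\varphi(x):=e^{-\eta x}(1+px)^{q}$, the half-integer $x=1/2$ must be the natural anchor, while the denominator $\eta-pq$ signals that, once the polynomial prefactor has been absorbed into an exponential, what remains is a geometric series of ratio $e^{-(\eta-pq)}$ (which converges precisely because $\eta>pq$). Accordingly I would first establish a sharp exponential envelope for the prefactor anchored at $1/2$, then sum the geometric tail, and finally compare the outcome with the claimed bound by a single scalar inequality.

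For the envelope I claim that $1+px\le(1+p/2)\,e^{p(x-1/2)}$ for all $x\ge 1/2$: both sides agree at $x=1/2$, the left side is linear while the right side is convex and increasing, and the derivative of the difference is nonnegative on $[1/2,\infty)$, so the difference stays nonnegative there. Raising to the power $q>0$ yields $(1+px)^{q}\le(1+p/2)^{q}e^{pq(x-1/2)}$ for $x\ge 1/2$. Applying this at the integers $k\ge 1$ and using $\eta>pq$ gives
\[
\sum_{k=1}^{\infty} e^{-\eta k}(1+pk)^{q}
\ \le\
(1+p/2)^{q}\, e^{-pq/2} \sum_{k=1}^{\infty} e^{-(\eta-pq)k}
\ =\
\frac{(1+p/2)^{q}\, e^{-pq/2}}{e^{\eta-pq}-1}.
\]
Writing $\delta:=\eta-pq>0$ and $e^{-\eta/2}=e^{-\delta/2}e^{-pq/2}$, the claimed bound reduces, after cancelling $(1+p/2)^{q}e^{-pq/2}$, to $\delta\le e^{\delta/2}-e^{-\delta/2}=2\sinh(\delta/2)$, which holds for every $\delta>0$ because the Taylor series of $2\sinh(\delta/2)$ has only nonnegative terms and its linear term is $\delta$. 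This closes the estimate.

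The only genuinely delicate point is choosing the anchor correctly in the envelope step: anchoring at $x=1/2$ (rather than at $0$ or $1$) is exactly what produces the constant $(1+p/2)^{q}$ and the factor $e^{-\eta/2}$ required by the statement, whereas a cruder bound such as $(1+px)^{q}\le e^{pqx}$ loses a factor $e^{p/2}>1+p/2$ and overshoots the target. A route more in the spirit of the preceding lemmas is available as an alternative: invoke Lemma~\ref{lemma[sum-convex-function]} with $\nu=1$ to bound the sum by $\int_{1/2}^{\infty}\varphi(x)\,\mathrm{d}x$, and then estimate that integral by integration by parts, using $(1+px)^{q-1}\le(1+p/2)^{-1}(1+px)^{q}$ for $x\ge 1/2$ and solving the resulting self-referential inequality with $\eta>pq$ to recover precisely $\frac{(1+p/2)^{q}}{\eta-pq}e^{-\eta/2}$. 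In that variant the hypothesis $\eta\ge(q+\sqrt{q})/p$ (together with $\eta>pq$) serves to keep $\varphi''\ge 0$ on $[1/2,\infty)$, as needed to apply Lemma~\ref{lemma[sum-convex-function]}; indeed $\varphi''(x)=e^{-\eta x}(1+px)^{q-2}\big[(qp-\eta(1+px))^{2}-qp^{2}\big]$, so the convexity threshold reads $|qp-\eta(1+px)|\ge p\sqrt{q}$, and the $\sqrt{q}$ appearing there is exactly the square root that surfaces in $(q+\sqrt{q})/p$.
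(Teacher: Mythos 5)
Your proof is correct, but your primary route is genuinely different from the paper's. The paper proves the lemma by showing that $\varphi(x)=e^{-\eta x}(1+px)^q$ is convex on $(0,\infty)$ --- this is exactly where the hypothesis $\eta\ge(q+\sqrt{q})/p$ enters --- then applying Lemma~\ref{lemma[sum-convex-function]} to dominate the sum by $\int_{1/2}^\infty\varphi(x)\,\mathrm{d}x$, and finally bounding that integral through the integration-by-parts/self-referential inequality; in other words, what you describe as the ``alternative route'' in your last paragraph \emph{is} the paper's proof, and your identification of the convexity threshold $|pq-\eta(1+px)|\ge p\sqrt{q}$ as the origin of the $\sqrt{q}$ in the hypothesis is accurate. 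Your main argument instead uses the anchored envelope $1+px\le(1+p/2)\,e^{p(x-1/2)}$ on $[1/2,\infty)$ (justified correctly by the derivative comparison you give, which is the real argument --- ``linear vs.\ convex increasing'' alone would not suffice), raises it to the power $q$, sums the geometric series of ratio $e^{-(\eta-pq)}$, and closes with the scalar inequality $\delta\le 2\sinh(\delta/2)$ for $\delta=\eta-pq>0$; each of these steps checks out. What your route buys is twofold: it is entirely elementary (no convexity lemma, no sum-to-integral comparison, no integration by parts), and --- more substantively --- it never uses the first hypothesis in \eqref{[assumption-exp]} at all, so it establishes the conclusion under the weaker assumptions $p,q>0$ and $\eta>pq$ only. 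Carried over to Theorem~\ref{theorem[E<](2)}, where the lemma is applied with $\eta=mr_j/a$, this would permit dropping the first condition in \eqref{[assumption-exp-theorem]}. What the paper's route buys, by contrast, is reuse of the machinery already set up for the Korobov case (Lemma~\ref{lemma[sum-convex-function]}), at the price of the extra hypothesis needed to secure convexity.
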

\begin{proof}
Set $\varphi(x):= e^{-\eta x}(1 + px)^q$. Let us first show that the assumption \eqref{[assumption-exp]} provides the convexity of $\varphi$ on $(0,\infty)$. 
Changing variables by putting $t = 1 + px$, we have 
$\varphi(x)= e^{\delta}g(t)$, where 
$\delta:= \eta/p$ and $g(t):= e^{- \delta t}\, t^q$. Observe that the function $g$ is convex for 
$t \ge \frac{q + \sqrt{q}}{\delta}$. Since the change of variables is linear and $p > 0$, this implies that the function $\varphi$ is convex for $x \ge \frac{q + \sqrt{q}}{\eta} - \frac{1}{p}$. Hence, for 
$\eta\ge \frac{q + \sqrt{q}}{p}$, $\varphi$ is a non-negative convex function in $(0,\infty)$.

By applying Lemma \ref{lemma[sum-convex-function]}, we have
\begin{equation} \nonumber
\sum_{k=1}^\infty e^{-\eta k}(1 + pk)^q
\ \le \
\int_{1/2}^\infty e^{-\eta x}(1 + px)^q \, \mathrm{d} x 
\ = \
\frac{e^{\eta/p}}{p}\int_{1 + p/2}^\infty e^{-\eta t/p} \, t^q \,\mathrm{d} t. 
\end{equation}
Observing that 
\begin{equation} \nonumber
\int_{1 + p/2}^\infty e^{-\eta t/p} \, t^q \, \mathrm{d}t 
\ \ge \
\int_{1 + p/2}^\infty e^{-\eta t/p} \, t^{q-1} \, \mathrm{d}t,
\end{equation}
the integral in the last expression can be estimated as
\begin{equation} \nonumber
\begin{split}
\int_{1 + p/2}^\infty e^{-\eta t/p} \, t^q \, \mathrm{d}t 
\ &= \
\frac{p}{\eta} \,  e^{-(\eta/p)(1+p/2)}\, (1 + p/2)^q 
\ + \ 
\frac{pq}{\eta} \,\int_{1 + p/2}^\infty e^{-\eta t/p} \, t^{q-1} \, \mathrm{d}t 
\\[2ex]
\ &\le \
\frac{p}{\eta} \,  e^{-(\eta/p)(1+p/2)}\, (1 + p/2)^q 
\ + \ 
\frac{pq}{\eta} \,\int_{1 + p/2}^\infty e^{-\eta t/p} \, t^q \, \mathrm{d}t. 
\end{split}
\end{equation}
Hence,
\begin{equation} \nonumber
\int_{1 + p/2}^\infty e^{-\eta t/p} \, t^q \, \mathrm{d}t 
\ \le \
\frac{p(1 + p/2)^q }{\eta - pq} \,  e^{-(\eta/p)(1+p/2)} 
\end{equation}
Summing up we obtain
\begin{equation} \nonumber
\sum_{k=1}^\infty e^{-\eta k}(1 + pk)^q
\ \le \
\frac{(1 + p/2)^q }{\eta - pq} \,  e^{- \eta/2} \\[2ex]
\end{equation}
which proves the lemma.
\hfill\end{proof}

\begin{theorem} \label{theorem[E<](2)}
Let  $m \in \NN$, $a > 0$, $\br = (r_j)_{j=1}^\infty \in \RRi$  with $r_j > 0$ and let $p,q >0$. 
Assume that there hold the conditions
\begin{equation} \label{[assumption-exp-theorem]}
r_j \ \ge \  \frac{q + \sqrt{  qa/m}}{p}\, \quad \quad \quad 
r_j \ > \  pq \, \quad j \in \NN,
\end{equation}
and
\begin{equation}\label{Mpq}
M_{p,q}(m):= \ (1 + p/2)^{q m/a} \sum_{j=1}^\infty \, \frac{ e^{- m r_j/2a}}{m r_j/a - pqm/a} \,  \ < \ \infty.
\end{equation}
Then we have for every $T \ge 1$,
\begin{equation} \label{[E<](2)}
|E(T)|
\ \le \
\left(\frac 3 2 \right)^{2m} \exp[M_{p,q}(m)] \ T^{m/a}.
\end{equation}
\end{theorem}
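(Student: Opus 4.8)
Theorem \ref{theorem[E<](2)} deals with the case $p>0$, where the polynomial prefactor $(1+p|s_j|)^{-q}$ genuinely appears in $\rho(\bk,\bs)$. The proof should closely mirror that of Theorem \ref{theorem[E<]}, the only new ingredient being the handling of the prefactor, which is precisely what Lemma \ref{lemma[<sum_exp]} was built to control.

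Let me sketch the plan. First I would reduce, exactly as before, to counting $|E^*(T)|$ where $E^*(T)$ uses $\bk\in\NN^m$, $\bs\in\ZZips$ and the scalar $\rho^*(\bk,\bs):=\max_{1\le j\le m}k_j^a\,\prod_{j=1}^\infty(1+ps_j)^{-q}\exp((\br,\bs))$. For $\bs\in H(T):=\{\bs\in\ZZips:\prod_j(1+ps_j)^{-q}\exp((\br,\bs))\le T\}$ I would set $T_{\bs}:=T^{1/a}\prod_j(1+ps_j)^{q/a}\exp(-(\br,\bs)/a)$, so that $(\bk,\bs)\in E^*(T)$ forces $\max_j k_j\le T_{\bs}$. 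By symmetry in the $k_j$ and by Lemma \ref{lemsumfin} (the $\eta>-1$ branch, with $k=k_m$, $\eta=m-1$), together with $T_{\bs}\ge1$ and the same $(T_{\bs}+1/2)^m\le(3/2)^m T_{\bs}^m$ step used in Theorem \ref{theorem[E<]}, I would obtain

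The plan is to arrive at the bound
\begin{equation}\nonumber
|E^*(T)|
\ \le\
\left(\frac{3}{2}\right)^{2m} T^{m/a}
\sum_{\bs\in H(T)}\ \prod_{j=1}^\infty (1+ps_j)^{qm/a}\,\exp\!\left(-m(\br,\bs)/a\right).
\end{equation}
As in the previous proof, the condition $M_{p,q}(m)<\infty$ forces $r_j\to\infty$, so $s_j\le(\log T)/r_j$ gives $s_j=0$ for all $\bs\in H(T)$ once $j>j^*$, and the sum factorizes into a finite product $\prod_{j=1}^{j^*}\bigl[1+\sum_{k=1}^\infty e^{-mr_jk/a}(1+pk)^{qm/a}\bigr]$.

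Now the single summand over $k\ge1$ is exactly the object estimated in Lemma \ref{lemma[<sum_exp]}, applied with the parameter choices $\eta=mr_j/a$ and $q$ replaced by $qm/a$. I must check that the hypotheses \eqref{[assumption-exp]} of that lemma are implied by the standing assumptions \eqref{[assumption-exp-theorem]}: the requirement $\eta\ge(q'+\sqrt{q'})/p$ with $q'=qm/a$ reads $mr_j/a\ge(qm/a+\sqrt{qm/a})/p$, i.e. $r_j\ge(q+\sqrt{qa/m})/p$, which is the first condition in \eqref{[assumption-exp-theorem]}; and $\eta>pq'$ reads $mr_j/a>pqm/a$, i.e. $r_j>pq$, the second condition. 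The lemma then yields $\sum_{k\ge1}e^{-mr_jk/a}(1+pk)^{qm/a}\le(1+p/2)^{qm/a}(mr_j/a-pqm/a)^{-1}e^{-mr_j/2a}$, whose sum over $j$ is exactly $M_{p,q}(m)$. Finishing with the infinite-product inequality \eqref{condition[P]} gives $\prod_{j=1}^{j^*}[1+\cdots]\le\exp[M_{p,q}(m)]$ and hence \eqref{[E<](2)}.

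The only genuine obstacle is the verification that the transformed parameters $(\eta,p,q')=(mr_j/a,\,p,\,qm/a)$ satisfy \eqref{[assumption-exp]}; this is a routine algebraic check that explains the slightly unusual shape $q+\sqrt{qa/m}$ appearing in \eqref{[assumption-exp-theorem]}, and is the reason those particular hypotheses were chosen. Everything else is a direct transcription of the $p=0$ argument, with $(1+pk)^{qm/a}$ carried along and dispatched by Lemma \ref{lemma[<sum_exp]} in place of the trivial geometric sum $(e^{mr_j/a}-1)^{-1}$.
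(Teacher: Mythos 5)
Your proposal is correct and follows essentially the same route as the paper's proof: reduction to $|E^*(T)|$, the choice $T_{\bs} = T^{1/a}\prod_{j}(1+ps_j)^{q/a}\exp[-(\br,\bs)/a]$ leading to the factor $\left(\frac{3}{2}\right)^{2m}T^{m/a}$, factorization of the sum over the finitely many active coordinates, and then Lemma \ref{lemma[<sum_exp]} applied with $\eta = mr_j/a$ and $q$ replaced by $qm/a$ (whose hypotheses you correctly verify to be exactly \eqref{[assumption-exp-theorem]}), finished by \eqref{condition[P]}. The only microscopic slip is that for $p>0$ the active-coordinate bound is $s_j \le \log T/(r_j - pq)$ rather than $(\log T)/r_j$ (using $(1+ps_j)^{-q}e^{r_js_j} \ge e^{(r_j-pq)s_j}$), but since $r_j \to \infty$ the finite-support conclusion is unaffected; this is a detail the paper itself subsumes under ``in a completely similar way to the proof of Theorem \ref{theorem[E<]}''.
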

\begin{proof}
As in the proof of Theorem \ref{theorem[E<]}, observe that 
$
|E(T)| \ = \ |E^*(T)|,
$
where
\begin{equation} \nonumber
E^*(T) 
:= \ 
\big\{(\bk,\bs) \in \NN^m \times \ZZips: \rho^*(\bk,\bs) \leq T\big\},
\end{equation}
and 
\begin{equation} \label{[lambda{a,mu,br}^*](2)}
\rho^*(\bk,\bs)
 := \
\max_{1\le j \le m}k_j^a  \, \prod_{j=1}^\infty (1 + p s_j)^{-q}\, \exp \left((\br,\bs)\right). 
\end{equation}
Thus, we need to derive an estimate for $|E^*(T)|$. 
To this end, for $\bs \in \ZZips$, we put  
\[
T_{\bs} \ := \ T^{1/a} \, \prod_{j=1}^\infty (1 + p s_j)^{q/a}\, \exp [-(\br,\bs)/a].
\]
In a completely similar way to the proof of Theorem \ref{theorem[E<]}, we can show that
\begin{equation}
|E^*(T)| 
\ \le \
\left(\frac{3}{2}\right)^{2m} \, T^{m/a}  \, 
\prod_{j=1}^\infty \left[1 + \sum_{s_j = 1}^\infty (1 + p s_j)^{qm/a}\, \exp (- m r_j s_j/a) \right]. \\[2ex]
\end{equation}
By use of Lemma  \ref{lemma[<sum_exp]} with $\eta:= m r_j/a$ and $q:=mq/a$, and the inequality \eqref{condition[P]}, we continue the estimation as
\begin{equation}
\begin{split}
|E^*(T)| 
\ &\le \
\left(\frac{3}{2}\right)^{2m} \, T^{m/a}  \, 
\prod_{j=1}^\infty \left[1 + 
\frac{(1 + p/2)^{qm/a} }{m r_j/a - pqm/a} \,  e^{- m r_j/2a}\right] \\[2ex]
\ &\le \
\left(\frac{3}{2}\right)^{2m} \, T^{m/a}  \, 
\exp \left[(1 + p/2)^{qm/a} \sum_{j=1}^\infty \, \frac{ e^{- m r_j/2a}}{m r_j/a - pqm/a}\right] \\[2ex]
\end{split}
\end{equation}
which gives the desired result.
\hfill\end{proof}

Note at this point that the assumption $m>0$ is crucial. For the  case $m=0$, our theorem would no longer give a rate in terms of $T$ but merely a constant.
Therefore, we shortly consider the case $m=0$ with $p=0$ in more detail. To this end, we  first focus on the $d$-dimensional case, i.e. we consider
$$
E_d(T) 
:= \ 
\big\{\bs \in \mathbb{Z}^d_+: \exp((\br,\bs)) \leq T\big\}.$$
We have the following bound for the cardinality $ |E_d(T)|$:
\begin{lemma} \label{lemma[F_+<]}
For $T \geq \exp(r_j),j=1,\ldots,d$, it holds
\begin{equation} \label{[F_+<](1)}
\left( \prod_{j=1}^d \frac1 r_j \right) \frac{\left( \log T \right)^d}{d!} 
\ \le \
|E_d(T)|
\ \le \
 \left( \prod_{j=1}^d \frac1 r_j \right) \frac{\left( \log T +\sum_{j=1}^d r_j \right)^d}{d!}.
\end{equation}
\end{lemma}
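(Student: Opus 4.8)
Taking logarithms, the defining condition $\exp((\br,\bs)) \le T$ is equivalent to $(\br,\bs) = \sum_{j=1}^d r_j s_j \le \log T$, so $E_d(T)$ is exactly the set of lattice points $\bs \in \mathbb{Z}^d_+$ lying in the weighted simplex $S(L) := \{\bx \in \mathbb{R}^d_+ : \sum_{j=1}^d r_j x_j \le L\}$ with $L := \log T$. The plan is therefore to count these lattice points by comparing their number with the Lebesgue volume of $S(L)$, using the standard unit-cube tiling device; neither Lemma \ref{lemma[sum-convex-function]} nor the subsequent estimates are needed here.

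First I would record the volume of $S(L)$. The linear substitution $u_j = r_j x_j$ has Jacobian $\prod_{j=1}^d r_j^{-1}$ and carries $S(L)$ onto the standard simplex $\{\mathbf{u} \in \mathbb{R}^d_+ : \sum_j u_j \le L\}$ of volume $L^d/d!$, so $\operatorname{vol} S(L) = \big(\prod_{j=1}^d r_j^{-1}\big) L^d/d!$. Next I attach to every $\bs \in \mathbb{Z}^d_+$ the half-open cube $Q_{\bs} := \bs + [0,1)^d$. These cubes are pairwise disjoint, each has volume one, and they tile $\mathbb{R}^d_+$; consequently $|E_d(T)| = \operatorname{vol}\big(\bigcup_{\bs \in E_d(T)} Q_{\bs}\big)$, which reduces both bounds to set inclusions between a union of cubes and a simplex.

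For the lower bound I would show $S(L) \subseteq \bigcup_{\bs \in E_d(T)} Q_{\bs}$: any $\bx \in S(L)$ lies in $Q_{\bs}$ for $\bs := (\lfloor x_j\rfloor)_{j=1}^d$, and since $0 \le s_j \le x_j$ we get $\sum_j r_j s_j \le \sum_j r_j x_j \le L$, so $\bs \in E_d(T)$. Taking volumes gives $|E_d(T)| \ge \operatorname{vol} S(L)$, the left inequality. For the upper bound I would conversely show $\bigcup_{\bs \in E_d(T)} Q_{\bs} \subseteq S(L + \sum_{j=1}^d r_j)$: if $\bx \in Q_{\bs}$ with $\bs \in E_d(T)$, then $x_j < s_j + 1$, whence $\sum_j r_j x_j < \sum_j r_j s_j + \sum_j r_j \le L + \sum_j r_j$. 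Taking volumes and inserting the volume formula with $L$ replaced by $L + \sum_j r_j$ yields the right inequality.

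The argument is elementary, so there is no serious obstacle; the only points requiring care are the correct orientation of the two inclusions and the use of half-open cubes so that the tiling is exact. I would also remark that the stated hypothesis $T \ge \exp(r_j)$ for all $j$ is not actually used: both inequalities follow from the volume comparison for every $T \ge 1$. That hypothesis merely guarantees $L \ge \max_j r_j$, so that each unit vector $\mathbf{e}_j$ belongs to $E_d(T)$ and the bounds are non-vacuous.
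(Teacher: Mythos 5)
Your proof is correct, and its opening step---taking logarithms so that $E_d(T)$ becomes the set of lattice points of $\ZZ^d_+$ in the weighted simplex $S(\log T)$, where $S(L):=\{\bx\in\RR^d_+:\sum_{j=1}^d r_jx_j\le L\}$---is exactly the reduction the paper makes. The difference lies in what comes next: the paper stops at this point and simply invokes the known lattice-point bounds for a simplex from \cite{Dov}, whereas you prove those bounds from scratch via the half-open cube tiling, i.e.\ the inclusions $S(\log T)\subseteq\bigcup_{\bs\in E_d(T)}\bigl(\bs+[0,1)^d\bigr)\subseteq S\bigl(\log T+\sum_{j=1}^d r_j\bigr)$ together with the volume formula $\operatorname{vol}S(L)=\bigl(\prod_{j=1}^d r_j^{-1}\bigr)L^d/d!$. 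This cube-and-volume comparison is in substance the standard proof of the cited result, so you have not found a structurally different path but rather made the lemma self-contained; that is a gain in completeness at essentially no cost in length. Your closing observation is also correct and worth recording: the hypothesis $T\ge\exp(r_j)$, $j=1,\dots,d$, is not needed for either inequality, which hold for every $T\ge 1$; in the paper this condition only matters in the discussion following the lemma, where $r_j\le\log T$ is what defines the active dimension $d(T)$, and the lower bound (the part that \cite{Padberg} slightly improves) is non-vacuous precisely under it.
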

\begin{proof} 
We have
$$\log \left(
\prod_{j=1}^d e^{r_js_j} \right)= \sum_{j=1}^d r_js_j \leq {\log T}:=\tilde T$$  
which describes just an index set which is a 
weighted discrete simplex in $d$ dimensions of size $\Tilde T$.
Then we obtain with \cite{Dov} the desired result. For further aspects and a slight improvement on the lower bound, see also \cite{Padberg}.
\hfill\end{proof} 

This directly shows that we now only have a logarithmic cost rate in $T$, which is however exponential in the dimension $d$. 
Moreover, the order constant depends on $d$. 
Thus, we can not directly go to the infinite-dimensional limit $d\to \infty$.  
If we settle for a slightly higher cost rate instead, then, in certain cases, upper bounds can still be derived, which are completely independent of $d$ and also hold in the 
infinite-dimensional case.
To this end, note that we need to resolve for a given $T$ only up to a certain active dimension $d=d(T)$ since in the higher dimensions $d+j, j>0,$ we encounter just index values $s_{d+j}=0$ due to the definition of $E(T)$. Therefore, it holds 
$r_j \leq \log T, j=1,\ldots d$. Then, we get 
$$
\sum_{j=1}^{d} r_j  \leq  d \, {\log T}.
$$
Thus, with Lemma \ref{lemma[F_+<]}, using the lower bound of Stirling's formula $d! \geq \sqrt{2\pi d } (d/e)^d$ and 
$((d+1)/d)^d \leq e$, we obtain 
\begin{equation}\label{estlogT}
\begin{split}
|E_d(T)| 
&\leq   \prod_{j=1}^d \frac 1 {r_j}  \frac{(d+1)^d (\log T)^d}{d!} 
\leq   \prod_{j=1}^d \frac 1 {r_j}  \frac {e^{d}} {\sqrt{2 \pi d}} \frac{(d+1)^d} {d^d} (\log T)^d 
\leq  \prod_{j=1}^d \frac 1 {r_j}  \frac {e^{d+1}}{\sqrt{2 \pi d}}  (\log T)^d. 
\end{split}
\end{equation}

Now we consider the asymptotic behavior of the $r_j$ in more detail. 
For example, in the situation $r_j \geq e \cdot j$, we have $\prod_j^{d} 1/r_j \leq e^{-d} / d !$
and get
$$
\prod_{j=1}^d \frac1 {r_j} \frac {e^{d+1}}{\sqrt{2 \pi d}}  (\log T)^{d}
\leq 
\frac  {e^{-d}}{ d!} \frac {e^{d+1}}{\sqrt{2 \pi d}}  (\log T)^{d} =
 \frac {e}{\sqrt{2 \pi d}}  \frac {(\log T)^{d}}{d!} 
\leq  
 \frac {e}{\sqrt{2 \pi d}} {{\exp(\log T)}} \leq \frac 1 {\sqrt d} \frac e {2 \pi} T \leq \frac e {2 \pi} T ,
$$
i.e. we obtain a cost rate bound of $\frac e {2 \pi} T$. 
In the situation $r_j \geq e \cdot j^2$ we get analogously the bound
$$
\prod_{j=1}^d \frac 1 {r_j} \frac {e^{d+1}}{\sqrt{2 \pi d}}  (\log T)^{d}
 \leq
  \frac {e^{-d}}  {(d!)^2} \frac {e^{d+1}}{\sqrt{2 \pi d}}  (\log T)^{d} 
=
\frac {e}{\sqrt{2 \pi d}}  \left( \frac{\sqrt{\log T}^{d} } {d!}\right)^2
\leq 
\frac {e}{\sqrt{2 \pi d}} {e^{(2 \sqrt{\log T})}} 
\leq 
\frac {e}{\sqrt{2 \pi }} {e^{(2 \sqrt{\log T})}},
$$
which grows slower than any polynomial in $T$. If necessary, this bound can be further improved by a factor of $1/\sqrt{\log T}$.

In the general case, we have the following lemma:
\begin{lemma} \label{analyticinf}
Let $m=0$ and $p=0$. Furthermore, let $r_j \geq  \omega j^\tau$, where $\omega, \tau > 0$, and let
$$
c_{\omega,\tau}:= \tau + \omega^{-{1}/{\tau}} \log \left( \frac{e}{\omega} \right) > 0.
$$
Then, for the cardinality of the hyperbolic cross
$E(T)$ in the infinite-dimensional case,  there holds the bound
\begin{equation} 
|E(T)|
\ \le \ \exp\left( c_{\omega, \tau} \, {(\log T)}^{{1}/{\tau}} \right).
\end{equation}
\end{lemma}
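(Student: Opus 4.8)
The plan is to start from the finite-dimensional estimate \eqref{estlogT} and optimize the number of active dimensions $d$ as a function of $T$. The key observation, already noted before the lemma, is that for a given $T$ only dimensions with $r_j \le \log T$ can carry a nonzero index $s_j$; since $r_j \ge \omega j^\tau$, this forces $\omega j^\tau \le \log T$, i.e.\ the active dimension is bounded by
\begin{equation}\label{dbound}
d \ \le \ \left(\frac{\log T}{\omega}\right)^{1/\tau}.
\end{equation}
First I would substitute the hypothesis $r_j \ge \omega j^\tau$ into the product $\prod_{j=1}^d 1/r_j \le \omega^{-d}(d!)^{-\tau}$, and plug this together with \eqref{dbound} into the bound \eqref{estlogT}, so that $|E(T)| \le |E_d(T)|$ is controlled by an explicit expression in $d$ and $\log T$ alone.

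Next I would take logarithms to turn the product/factorial structure into a sum that can be estimated by Stirling's formula in the form $\log(d!) \ge d\log d - d$. Writing $\ell := \log T$, the logarithm of the right-hand side of \eqref{estlogT} is roughly $d\log\ell - d\log\omega - \tau(d\log d - d) + d + O(\log d)$. Using the relation $d^\tau \asymp \ell/\omega$ from \eqref{dbound}, so that $\tau \log d \approx \log(\ell/\omega)$, the leading terms $d\log\ell$ and $-\tau d\log d$ nearly cancel, and what survives is a term proportional to $d$. Carrying the constants through, the surviving coefficient collects precisely into $c_{\omega,\tau} = \tau + \omega^{-1/\tau}\log(e/\omega)$ multiplied by $d \asymp (\ell/\omega)^{1/\tau} = \omega^{-1/\tau}(\log T)^{1/\tau}$, which after absorbing the $\omega^{-1/\tau}$ factor yields the claimed exponent $c_{\omega,\tau}(\log T)^{1/\tau}$.

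The main obstacle is the bookkeeping in this cancellation: one must track the polynomial-in-$d$ prefactors such as $e^{d+1}/\sqrt{2\pi d}$ and the $((d+1)/d)^d \le e$ factor carefully enough that they get absorbed into the constant $c_{\omega,\tau}$ rather than spoiling the exponent, and one must handle the fact that the optimizing $d$ from \eqref{dbound} is generally not an integer (so a floor must be inserted and the resulting error shown to be harmless). A secondary subtlety is verifying that \eqref{estlogT} is applicable, i.e.\ that $T \ge \exp(r_j)$ for all active $j$, which is exactly the content of the active-dimension restriction and therefore holds by construction. I expect that once $d$ is set to its extremal value \eqref{dbound} and Stirling's inequality is applied in the clean form $d! \ge (d/e)^d$, the exponent simplifies directly to $c_{\omega,\tau}(\log T)^{1/\tau}$ with no residual $T$-dependence beyond the stated power of $\log T$, matching the heuristic special cases $\tau=1$ and $\tau=2$ computed just above the lemma.
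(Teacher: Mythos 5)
Your skeleton coincides with the paper's own proof: both start from \eqref{estlogT}, insert $\prod_{j=1}^d 1/r_j \le \omega^{-d}(d!)^{-\tau}$, and use the active-dimension bound $d \le (\log T/\omega)^{1/\tau}$. (Your worry about a floor is a non-issue: $d$ is an integer by definition; what you actually need, and do not state, is that the resulting bound is increasing in $d$ on the admissible range, so that evaluating at $d_{\max}:=(\ell/\omega)^{1/\tau}$, $\ell:=\log T$, is legitimate; this holds because the $d$-derivative of its logarithm is $\log(\ell/\omega)-\tau\log d+1\ge 1$ there.) The genuine gap is your last step. Carrying out the Stirling-plus-cancellation computation correctly at $d_{\max}$, the terms $d\log\ell - d\log\omega - \tau d\log d$ vanish exactly, and what survives is $\tau d_{\max}$ (from Stirling's $+\tau d$) plus $d_{\max}+1$ (from $e^{d+1}$), i.e.\ the exponent is
\[
(\tau+1)\,\omega^{-1/\tau}\,\ell^{1/\tau} \;+\; 1 - \tfrac12\log(2\pi d),
\]
\emph{not} ``$c_{\omega,\tau}$ multiplied by $d$'' as you assert. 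Your subsequent move of ``absorbing the $\omega^{-1/\tau}$ factor'' is invalid: for $\omega<1$ that factor exceeds $1$, and an upper bound cannot be shrunk by discarding it. As written, the argument does not arrive at the stated constant.

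The gap is repairable in two ways. (i) Stay on your route and prove the missing comparison $(\tau+1)\omega^{-1/\tau}\le c_{\omega,\tau}=\tau+\omega^{-1/\tau}\log(e/\omega)$: setting $u=\omega^{-1/\tau}$, this is equivalent to $\tau\,(1-u+u\log u)\ge 0$, which is true since $u\mapsto 1-u+u\log u$ has derivative $\log u$ and hence attains its minimum $0$ at $u=1$. Together with the monotonicity remark above, and a separate treatment of very small $d$ (the term $1-\tfrac12\log(2\pi d)$ is positive for $d=1$; the paper likewise restricts to $d>1$), this closes your proof, and in fact yields a slightly \emph{sharper} constant than $c_{\omega,\tau}$. (ii) Alternatively, use the paper's device, which bypasses all cancellation bookkeeping: apply the elementary inequality $x^d/d!\le e^x$ with $x=\ell^{1/\tau}$ to get $\bigl(\ell^{d/\tau}/d!\bigr)^{\tau}\le \exp\bigl(\tau\,\ell^{1/\tau}\bigr)$, so the exponent splits as $\tau\ell^{1/\tau}+d\log(e/\omega)$, and then insert $d\le(\ell/\omega)^{1/\tau}$ to land exactly on $c_{\omega,\tau}\,\ell^{1/\tau}$ (note this last insertion silently uses $\log(e/\omega)\ge 0$, i.e.\ $\omega\le e$; your route, once repaired as in (i), needs no such restriction).
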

\begin{proof} 
Consider again the finite-dimensional setting from Lemma \ref{lemma[F_+<]}. 
From the assumption  $r_j \geq \omega j^\tau$
it follows that
$$\prod_{j=1}^d \frac{1}{r_j} \leq \prod_{j=1}^d \omega^{-1} j^{-\tau} = \omega^{-d} (d!)^{-\tau},$$
where $d=d(T)$ again denotes the active dimension of $E(T)$.
Furthermore, since
$r_d \leq {\log T}$ it holds $d \leq \left( \frac{{\log T}}{\omega} \right)^{{1}/{\tau}}$.
Now, recall the upper bound from (\ref{estlogT}).
By employing $\frac{x^d}{d!} \leq \exp(x)$ we obtain for $d>1$, i.e. for $\log(T) > r_1$, the estimate
\begin{align*}
	|E_d(T)| & \leq	\prod_{j=1}^d \frac{1}{r_j} \frac{e^{d+1}}{\sqrt{2\pi d}} (\log T)^d   \leq \frac{e^{d+1}}{\sqrt{2\pi d} \omega^d (d!)^\tau} (\log T)^d 
		     = \frac{e^{d+1}}{ \omega^d \sqrt{2\pi d}} \left(  \frac{(\log T)^{{d}/{\tau} } }{d!} \right)^\tau \\
		     &\leq \frac{e^{d+1}}{ \omega^d \sqrt{2\pi d}} \exp\left( \tau (\log T)^{{1}/{\tau}} \right) 
		     = \frac{e}{\sqrt{2\pi d}} \exp\left( \tau (\log T)^{{1}/{\tau}}  + d \log\left( \frac{e}{\omega} \right) \right) \\
		    & \leq \frac{e}{\sqrt{2\pi d}} \exp\left( \tau (\log T)^{{1}/{\tau}}  +  \left( \frac{\log T}{\omega} \right)^{{1}/{\tau}} \log \left( \frac{e}{\omega} \right) \right) \\
		    & =  \frac{e}{\sqrt{2\pi d}} \exp\left[ (\log T)^{{1}/{\tau}} \left( \tau + \omega^{-{1}/{\tau}} \log \left( \frac{e}{\omega} \right) \right) \right]\\
		    & \leq \exp\left( c_{\omega, \tau} \, (\log T)^{{1}/{\tau}} \right) .
\end{align*}
Therefore, this is also a bound on $|E(T)|$.
\hfill\end{proof}

Altogether this shows a cost rate which is for $\tau>1$ better than any algebraic rate and only slightly worse than any logarithmic rate.\footnote{Or, 
vice versa, with cost $M = |E(T)|$ and thus $T \geq \exp \left( c_{\omega, \tau}^{-\tau} {(\log M)}^\tau \right) $,
 this gives a bound $\varepsilon \leq \exp(- c_{\omega,\tau}^{-\tau} {(\log M)}^\tau) $ for the accuracy $\varepsilon= T^{-1}$ that is obtained for fixed cost $M$.
Thus, we obtain a rate for the approximation error which is for $\tau>1$ better than any algebraic rate and is only slightly worse than any exponential rate. For  $0< \tau<1$, we get a meaningful rate at all.}
Moreover, for $0< \tau<1$, we get a meaningful estimate from our approach at all.
This has to be compared to results which were derived elsewhere by means of Stechkin's Lemma using so called $p$-summability and $\delta$-admissibility conditions that are based on complex analytic regularity assumptions on polydiscs/polyellipses.
In contrast to that, we only have to count the cardinality of the respective hyperbolic cross induced by $T$ and the smoothness indices $\br$. But we will have to use a Hilbert space structure (and not just a Banach space). Moreover, we have to assume a monotone ordering of the smoothness indices $\br$ to be given in the first place.

\subsection{Extensions} \label{Extensions}

In the previous two subsections, we gave upper bounds for the cardinality of hyperbolic crosses in the infinite-dimensional case
for the setting of nonperiodic mixed Sobolev-Korobov-type and nonperiodic Sobolev-analytic-type smoothness indices, i.e. we specifically considered the subsets 
$G(T) \subset \ZZmp \times \ZZips$ and $E(T) \subset  \ZZmp \times \ZZips$ from (\ref{G(T)}) and (\ref{E(T)}), respectively, that involve only nonnegative values in $\bk$ and $\bs$ which reflects the nonperiodic situation. Of course, periodic situations can be dealt with in an analogous way which would involve the sets  $ \ZZm$ and $\ZZis$ instead. This will be discussed in this subsection. 
Here, altogether, four different combinations of $ \ZZmp$ or $\ZZm$ with  $\ZZips$ or $\ZZis$ are possible. This leads, in addition to the 
fully nonperiodic case which we already dealt with, to corresponding results for the fully periodic case and for the nonperiodic-periodic and periodic-nonperiodic counterparts.  

To simplify the presentation, we need some new notation. We use the letter $\Ii$ to denote either 
$\ZZmp$ or $\ZZm$, and  $\Jj$ to denote either $\ZZips$ or $\ZZis$. Furthermore, we use the letter $S$ to denote either $K$ or $A$, i.e. the Korobov or the analytic setting.
We then can define the hyperbolic crosses in the infinite-dimensional case
\begin{equation} \label{G(T)-ext}
G_{\Ii \times \Jj}^S(T) 
:= \ 
\big\{(\bk,\bs)  \in \Ii \times \Jj: \, \lambda_S(\bk,\bs)   \leq T\big\} \quad \mbox{where} \quad S \in \left\{ K,A\right\}.
\end{equation}
Here, the scalars $\lambda_K(\bk,\bs) := \lambda(\bk,\bs)$ and $\lambda_A(\bk,\bs):= \rho(\bk,\bs)$ are defined in 
 \eqref{[lambda_{br}]} and \eqref{[lambda{a,mu,br}]}, respectively, with given and fixed corresponding parameters $m,a,t,r,\br,p,q$.
Furthermore, we denote by $\Jjd$  the set of all $\bs \in \Jj$ such that 
$\supp(\bs) \subset \{1,...,d\}$. With this notation we can identify  $\ZZd$ with $\Jjd$ for $\Jj = \ZZis$, 
and $\ZZdp$ with $\Jjd$ for $\Jj = \ZZips$. 
Then, we can analogously define  the hyperbolic crosses $G_{\Ii \times \Jjd}^K(T)$ and $G_{\Ii \times \Jjd}^A(T)$ in the finite-dimensional case by replacing $\Jj$ with $\Jjd$ in (\ref{G(T)-ext}), respectively.
We altogether obtain eight types of hyperbolic crosses for the infinite-dimensional case and eight types of  hyperbolic crosses for the $m+d$-dimensional case which correspond to the different possible combinations. 
For example, for the index sets of nonperiodic mixed Korobov-type and nonperiodic analytic smoothness, we now have 
$G_{\ZZmp \times \ZZip}^K(T) = G(T)$ and $E_{\ZZmp \times \ZZip}^A(T) = E(T)$ with $G(T)$ and $E(T)$ from \eqref{G(T)} and \eqref{E(T)}, respectively. 

We now can formulate the results on the estimation of $|G_{\Ii \times \Jj}^K(T)|$ and  $|G_{\Ii \times \Jj}^A(T)|$. 
To this end,  let us define the following quantities which depend on the parameters $m,a,t,r,\br,p,q$ and $T$:
\begin{equation}\label{[A^S(T)]}
A^S(T) := \
\begin{cases}
A(a,r,m,t,T), \  & \ S = K, \\[1ex]
T^{m/a}, \  & \ S = A, \\[2ex]
\end{cases}
\quad \quad \quad \mbox{ and } \quad \quad \quad
M^S  := \
\begin{cases}
M(t), \  & \ S = K, \\[1ex]
M_{p,q}, \  & \ S = A,
 \end{cases}
\end{equation}
with $A(a,r,m,t,T)$ from \eqref{AAA}, $M(t)$ from \eqref{condition[M]} and $M_{p,q}$ from \eqref{M00} for $p=0$ and from \eqref{Mpq} for $p>0$. Furthermore, let us define
\begin{equation}\label{[C_{Ii otimes Jj}^S]}
C_{\Ii \times \Jj}^S 
 := \
\begin{cases}
\exp{(M^S)}\, B(a,r,m,t), \  & \ S = K, \ \Ii = \ZZmp, \ \Jj = \ZZips, \\[1ex]
2^m\, \exp{(M^S)}\, B(a,r,m,t), \  & \ S = K, \ \Ii = \ZZm, \ \Jj = \ZZips, \\[1ex]
\exp{(2M^S)}\, B(a,r,m,t), \  & \ S = K, \ \Ii = \ZZmp, \ \Jj = \ZZis, \\[1ex]
2^m\, \exp{(2M^S)}\, B(a,r,m,t), \  & \ S = K, \ \Ii = \ZZm, \ \Jj = \ZZis, \\[1ex]
(3/2)^{2m}\exp{(M^S)}, \  & \ S = A, \ \Ii = \ZZmp, \ \Jj = \ZZips, \\[1ex]
2^m\,(3/2)^{2m}\exp{(M^S)}, \  & \ S = A, \ \Ii = \ZZm, \ \Jj = \ZZips, \\[1ex]
(3/2)^{2m}\exp{(2M^S)}, \  & \ S = A, \ \Ii = \ZZmp, \ \Jj = \ZZis, \\[1ex]
2^m\,(3/2)^{2m}\exp{(2M^S)}, \  & \ S = A, \ \Ii = \ZZm, \ \Jj = \ZZis, \\[1ex]
 \end{cases}
\end{equation}
with $ B(a,r,m,t)$ from \eqref{BBB}.

We are now in the position to state a generalized theorem which covers all different possible cases.
\begin{theorem} \label{theorem[Extension]}
For the case $S = K$, let the triple $m,a,\br$ be given as in \eqref{m,a,br[K]}, and,  for the case $S = A$, let the $5$-tuple $m,a,\br,p,q$ be given as in \eqref{m,a,br,p,q[A]}.
Suppose  that there hold the assumptions of Theorem  \ref{theorem[G<]} if $S = K$,  and the assumptions of 
Theorem \ref{theorem[E<]} if $S = A$, $p=0$, and of Theorem \ref{theorem[E<](2)} if $S = A$, $p>0$.
Then, we have for every $T \ge 1$
\begin{equation} \label{[G^S_IJ]}
|G_{\Ii \times \Jjd}^S(T)| 
\ \le \
 |G_{\Ii \times \Jj}^S(T)| 
\ \le \
C_{\Ii \times \Jj}^S \, A^S(T). 
\end{equation}
\end{theorem}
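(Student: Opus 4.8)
The plan is to reduce the eight cases to the fully nonperiodic one already settled in Theorems \ref{theorem[G<]}, \ref{theorem[E<]} and \ref{theorem[E<](2)}, by tracking separately the two independent sign-extensions $\ZZmp\rightsquigarrow\ZZm$ (in the finite $\bk$-block) and $\ZZips\rightsquigarrow\ZZis$ (in the infinite $\bs$-block). The basic observation is that both scalars $\lambda_K(\bk,\bs)=\lambda(\bk,\bs)$ and $\lambda_A(\bk,\bs)=\rho(\bk,\bs)$ from \eqref{[lambda_{br}]} and \eqref{[lambda{a,mu,br}]} depend on $(\bk,\bs)$ only through $(|\bk|,|\bs|)$, hence are invariant under arbitrary sign flips of the $k_j$ and $s_j$. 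The left inequality in \eqref{[G^S_IJ]} is immediate: identifying $\Jjd$ with the set of $\bs\in\Jj$ supported in $\{1,\dots,d\}$ gives $G_{\Ii\times\Jjd}^S(T)\subseteq G_{\Ii\times\Jj}^S(T)$. For $\Ii=\ZZmp$, $\Jj=\ZZips$ the right inequality is precisely Theorem \ref{theorem[G<]} ($S=K$) and Theorems \ref{theorem[E<]}, \ref{theorem[E<](2)} ($S=A$), which yield exactly the constants $C_{\ZZmp\times\ZZips}^S$ listed in \eqref{[C_{Ii otimes Jj}^S]}.

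For the extension $\ZZmp\rightsquigarrow\ZZm$ I would argue slice-wise in $\bs$. Writing
\[
|G_{\ZZm\times\Jj}^S(T)| \ = \ \sum_{\bs\in\Jj}\big|\{\bk\in\ZZm:\ \lambda_S(\bk,\bs)\le T\}\big|
\]
and using that for fixed $\bs$ the constraint depends on $\bk$ only through $|\bk|$, each admissible $\bk\in\ZZmp$ has at most $2^{|\{j:\,k_j\not=0\}|}\le 2^m$ sign-variants in $\ZZm$. Hence the inner slice grows by a factor at most $2^m$, and summing over $\bs\in\Jj$ gives $|G_{\ZZm\times\Jj}^S(T)|\le 2^m\,|G_{\ZZmp\times\Jj}^S(T)|$ for every $\Jj$ and every $S$. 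This is exactly the extra factor $2^m$ separating the $\Ii=\ZZm$ rows from the $\Ii=\ZZmp$ rows in \eqref{[C_{Ii otimes Jj}^S]}, and it may be applied on top of either choice of $\Jj$.

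The genuine difficulty is the extension $\ZZips\rightsquigarrow\ZZis$, since the number of sign patterns of $\bs$, namely $2^{|\supp(\bs)|}$, is unbounded and cannot be factored out as a constant. Here I would re-run the summation arguments of Theorems \ref{theorem[G<]}, \ref{theorem[E<]} and \ref{theorem[E<](2)} line by line, the only change being that each one-dimensional inner sum is now taken over $s_j\in\ZZ$ rather than over $s_j\ge 0$; by the sign invariance it splits as
\[
\sum_{s_j\in\ZZ}(\,\cdots\,) \ = \ 1 + 2\sum_{s_j\ge 1}(\,\cdots\,),
\]
the leading $1$ coming from $s_j=0$ and the $2$ from the two signs of each nonzero coordinate. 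Thus every factor of the convergent infinite product that previously read $1+p_j$ — with $p_j=(\lambda r_j-1)^{-1}(3/2)^{-(\lambda r_j-1)}$ in the Korobov case, and $p_j=(e^{mr_j/a}-1)^{-1}$ (resp.\ the term furnished by Lemma \ref{lemma[<sum_exp]}) in the analytic case with $p=0$ (resp.\ $p>0$), so that in every case $\sum_j p_j=M^S$ — now reads $1+2p_j$. Replacing $1+p_j\le e^{p_j}$ by $1+2p_j\le e^{2p_j}$ in \eqref{condition[P]} turns the product bound $\exp(M^S)$ into $\exp(2M^S)$, while the summability hypothesis $M^S<\infty$ and the convexity/monotonicity assumptions underpinning Lemmata \ref{lemma[<sum_k^-a]} and \ref{lemma[<sum_exp]} are untouched, as is the leading factor $A^S(T)$ produced by the finite $\bk$-block. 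The active-dimension truncation (existence of $j^*$ with $s_j=0$ for $j>j^*$ throughout the relevant set) survives verbatim, since it relies only on $r_j\to\infty$ and on a bound for $|s_j|$ that is insensitive to the sign of $s_j$. Composing this $\exp(M^S)\to\exp(2M^S)$ conversion with the $2^m$ factor of the previous paragraph reproduces all eight constants in \eqref{[C_{Ii otimes Jj}^S]}. The one point genuinely requiring care is precisely this absorption of the doubled — yet still summable — factors $1+2p_j$ into the infinite product, which is exactly where the passage to $\Jj=\ZZis$ could, a priori, have destroyed $d$-independence.
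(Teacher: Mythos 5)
Your proof is correct and is precisely the argument the paper intends: the paper in fact refrains from giving an explicit proof of Theorem \ref{theorem[Extension]}, stating only that it follows from the proofs of Theorems \ref{theorem[G<]}, \ref{theorem[E<]} and \ref{theorem[E<](2)} by obvious modifications, which is what you supply. Your two modifications --- the slice-wise factor $2^m$ for the extension $\ZZmp \rightsquigarrow \ZZm$ in the $\bk$-block, and the replacement of each inner-sum factor $1+p_j$ by $1+2p_j$ (hence of $\exp(M^S)$ by $\exp(2M^S)$ via \eqref{condition[P]}) when the sums over $s_j$ run over $\ZZ$ instead of $\ZZ_+$ --- are exactly the needed changes, and they reproduce all eight constants in \eqref{[C_{Ii otimes Jj}^S]}.
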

We here refrain from giving explicit proofs since they are similar to these of the last two subsections involving just obvious modifications.

We finish this section by giving a simple lower bound for $|G_{\Ii \times \Jjd}^S(T)|$ and $|G_{\Ii \times \Jj}^S(T)|$.

\begin{theorem} \label{theorem[lower-bound-G^S_IJ]}
For the case $S = K$, let the triple $m,a,\br$ be given as in \eqref{m,a,br[K]}, and,  for the case $S = A$, let the $5$-tuple $m,a,\br,p,q$ be given as in \eqref{m,a,br,p,q[A]}.
Then, we have for every $T \ge 1$
\begin{equation} \label{[lower-bound-G^S_IJ]}
 |G_{\Ii \times \Jj}^S(T)| 
\ \ge \
 |G_{\Ii \times \Jjd}^S(T)| 
\ \ge \
\lfloor T^{1/a} \rfloor^m.
\end{equation}
\end{theorem}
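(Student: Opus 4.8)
The plan is to establish the two inequalities separately, the first being immediate and the second resting on a single decoupling observation. For the left inequality, I would simply note that $\Jjd \subseteq \Jj$ by definition, since every $\bs$ with $\supp(\bs) \subseteq \{1,\dots,d\}$ lies in $\Jj$. Hence $G_{\Ii \times \Jjd}^S(T) \subseteq G_{\Ii \times \Jj}^S(T)$, and the cardinalities are ordered accordingly. This reasoning is valid verbatim for both $S = K$ and $S = A$ and for either choice of $\Ii$ and $\Jj$.

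For the right inequality, the key idea is to restrict attention to indices with vanishing $\bs$-part. Setting $\bs = {\bf 0}$, every product factor in both \eqref{[lambda_{br}]} and \eqref{[lambda{a,mu,br}]} collapses to $1$, because $(1+0)^r = 1$, $(1+p\cdot 0)^{-q} = 1$, and $\exp((\br,{\bf 0})) = 1$. Thus for either setting $S \in \{K, A\}$ one has
\begin{equation} \nonumber
\lambda_S(\bk, {\bf 0}) \ = \ \max_{1 \le j \le m}(1 + |k_j|)^a .
\end{equation}
Consequently $(\bk, {\bf 0}) \in G_{\Ii \times \Jjd}^S(T)$ holds precisely when $(1 + |k_j|)^a \le T$ for all $j = 1,\dots,m$, i.e. when $1 + |k_j| \le T^{1/a}$.

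I would then count the admissible $\bk$. Restricting to nonnegative coordinates $k_j \in \{0, 1, \dots, \lfloor T^{1/a}\rfloor - 1\}$, which is permissible whether $\Ii = \ZZmp$ or $\Ii = \ZZm$, each coordinate admits exactly $\lfloor T^{1/a}\rfloor$ values, since for integer $k_j \ge 0$ the condition $k_j + 1 \le T^{1/a}$ is equivalent to $k_j + 1 \le \lfloor T^{1/a}\rfloor$. As the $m$ coordinates vary independently and ${\bf 0} \in \Jjd$ (its support is empty, hence contained in $\{1,\dots,d\}$), this exhibits a subset of $G_{\Ii \times \Jjd}^S(T)$ of cardinality $\lfloor T^{1/a}\rfloor^m$, which yields the claimed bound.

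There is essentially no obstacle here: the whole point is that fixing $\bs = {\bf 0}$ decouples the Sobolev part from the Korobov resp.\ analytic part and reduces the estimate to counting integer points in an $m$-dimensional cube. The only minor care needed is to confirm the per-coordinate floor count and to check that this single argument covers all eight combinations of $\Ii$ and $\Jj$ as well as both smoothness types $S$ uniformly.
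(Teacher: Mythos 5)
Your proposal is correct and follows essentially the same route as the paper's own proof: the paper likewise reduces to the case $\Ii = \ZZmp$, $\Jjd = \ZZdp$ and counts the set $Q$ of pairs $(\bk,{\bf 0})$ with $k_j \le T^{1/a}-1$, which has exactly $\lfloor T^{1/a}\rfloor^m$ elements and lies inside $G_{\Ii \times \Jjd}^S(T)$. Your treatment is, if anything, slightly more explicit, since you spell out the inclusion $\Jjd \subseteq \Jj$ for the left inequality and verify uniformly that $\lambda_S(\bk,{\bf 0})$ collapses to $\max_{1\le j\le m}(1+|k_j|)^a$ for both $S=K$ and $S=A$.
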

\begin{proof} It is sufficient to treat the case $\Ii = \ZZmp$, $\Jjd=\ZZdp$. 
To this end, we consider the set $Q$ of all elements $(\bk, {\bf0}) \in \ZZmp \times \ZZdp$ such that $ k_j \le T^{1/a} - 1$, $j =1,...,m$. 
Then, $Q$ is a subset of $G_{\ZZmp \times \ZZdp}^S(T)$ for $T \ge 1$. Hence, the obvious inequality 
$|Q| \ge \lfloor T^{1/a} \rfloor^m$ proves the assertion.
\hfill\end{proof}

\section{Hyperbolic cross approximation in infinite tensor product Hilbert spaces}  
\label{Approximation in Hilbert spaces}

In this section, we present the basic framework for our approximation theory in the infinite-dimensional case which is based on infinite tensor product Hilbert spaces. 
Then, we consider specific instances of our general theory and define the mixed Sobolev-Korobov-type and mixed Sobolev-analytic-type spaces mentioned in the introduction.

\subsection{Infinite tensor product Hilbert spaces and general approximation results}
We recall the notion of the infinite tensor product of separable Hilbert spaces. 
Let $H_j$, $j=1,...,m$, be separable Hilbert spaces with inner products $\langle \cdot,\cdot\rangle_j$. 
First, we define the finite-dimensional tensor product of $H_j$, $j=1,...,m$, as the tensor vector space 
$H_1 \otimes H_2 \otimes \cdots \otimes H_m$ equipped with the inner product
\begin{equation} \label{tensorporoduct-innerporducts}
 \langle\otimes_{j=1}^m\phi_j,\otimes_{j=1}^m\psi_j\rangle 
:= \ 
\prod_{j=1}^m\langle\phi_j,\psi_j\rangle_j \, \quad 
\mbox{for all} \ \phi_j,\psi_j \in H_j. 
\end{equation}
By taking the completion under this inner product, the resulting Hilbert space is defined as the tensor product space $H_1 \otimes H_2 \otimes \cdots \otimes H_m$  of $H_j$, $j=1,...,m$.
Next, we consider the infinite-dimensional case.
If $H_j, j \in \NN$, is a collection of separable Hilbert spaces and $\xi_j, j \in \NN$, is a collection of unit vectors in these Hilbert spaces then the infinite tensor product $\otimes_{j \in \NN} H_j$  is the completion of the set of all finite linear combinations of simple tensor vectors $\otimes_{j \in \NN} \phi_j$ where all but finitely many of the $\phi_j$'s are equal to the corresponding $\xi_j$. 
The inner product of 
$\otimes_{j \in \NN} \phi_j$ and $\otimes_{j \in \NN} \psi_j$ is defined as in \eqref{tensorporoduct-innerporducts}.
For details on infinite tensor product of Hilbert spaces, see \cite{BR02}. 

Now, we will need a tensor product of Hilbert spaces of a special structure. 
Let $H_1$ and $H_2$ be two given infinite-dimensional separable Hilbert spaces. Consider the infinite tensor product Hilbert space
\begin{equation} \label{H1H2}
 \Ll 
:= \ 
H_1^m \otimes H_2^\infty 
\quad \quad \mbox{ where } \quad \quad
H_1^m
:= \
\otimes_{j=1}^m H_1, \quad  
H_2^\infty
:= \ \otimes_{j=1}^\infty H_2. 
\end{equation}
In the following, we use the letters $I,J$ to denote either 
$\ZZ_+$ or $\ZZ$.  Recall also that we use the letter $\Ii$ to denote either $\ZZmp$ or $\ZZm$ and the letter $\Jj$ to denote either $\ZZips$  or $\ZZis$.
Let $\{\phi_{1,k}\}_{k \in I}$ and $\{\phi_{2,s}\}_{s \in J}$ be given orthonormal bases of $H_1$ and $H_2$, respectively. 
Then,  $\{\phi_{1,\bk}\}_{\bk \in \Ii}$ and $\{\phi_{2,\bs}\}_{\bs \in \Jj}$ are orthonormal bases of $H_1^m$ and $H_2^\infty$, respectively, 
where
\begin{equation} \label{ozzzzz}
 \phi_{1,\bk} 
:= \ 
 \otimes_{j=1}^m \phi_{1,k_j}, \quad
\phi_{2,\bs} 
:= \ 
\otimes_{j=1}^\infty \phi_{2,s_j}.
\end{equation}
Moreover, the set $\{\phi_{\bk,\bs}\}_{(\bk,\bs) \in \Ii \times \Jj}$ is an orthonormal basis of $\Ll$, where
\begin{equation} \label{oxxxxx}
 \phi_{\bk,\bs} 
:= \ 
 \phi_{1,\bk}  \otimes \phi_{2,\bs}.
\end{equation}
Thus, every $f \in \Ll$ can by represented by the series
\begin{equation} \label{}
f
\ = \ 
\sum_{(\bk,\bs) \in \Ii \times \Jj} f_{\bk,\bs}\,  \phi_{\bk,\bs},
\end{equation}
where $f_{\bk,\bs}:=\langle f,\phi_{\bk,\bs} \rangle$ is the $(\bk,\bs)$th coefficient of $f$ with respect to the orthonormal basis $\{\phi_{\bk,\bs}\}_{(\bk,\bs) \in \Ii \times \Jj}$. 
Furthermore, there holds Parseval's identity
\begin{equation} \label{[ParsevalId]}
\|f\|_\Ll^2 
\ = \ 
\sum_{(\bk,\bs) \in \Ii \times \Jj} |f_{\bk,\bs}|^2.
\end{equation}

Now let us assume that a general sequence of scalars 
$\lambda := \{\lambda(\bk,\bs)\}_{(\bk,\bs) \in \Ii \times \Jj}$ with 
$\lambda(\bk,\bs) \not= 0$ is given.
Then, we define the associated space $\Ll^\lambda$ as the set of all elements $f \in \Ll$ such that there exists a
$g \in \Ll$ such that
\begin{equation} \label{def[g-presentation]}
f := \sum_{(\bk,\bs) \in \Ii \times \Jj} \frac{g_{\bk,\bs}}{\lambda(\bk,\bs)} \, \phi_{\bk,\bs}.
\end{equation}
The norm of $\Ll^\lambda$ is defined by
$\|f\|_{\Ll^\lambda} \ := \|g\|_\Ll.$
From the definition \eqref{def[g-presentation]} and Parseval's identity we can see that 
\begin{equation} \label{P-Id[norm-Ll^lambda]}
\|f\|_{\Ll^\lambda}^2 
\ = \ 
\sum_{(\bk,\bs) \in \Ii \times \Jj} |\lambda(\bk,\bs)|^2 \,  |f_{\bk,\bs}|^2 .
\end{equation}
We also consider the subspace $\Ll_d$ in $\Ll$ defined by 
$\Ll_d
:= 
\left\{f 
 =  
\sum_{(\bk,\bs) \in \Ii \times \Jj_d} f_{\bk,\bs}\,  \phi_{\bk,\bs} \right\}
$
and the subspace
$\Ll^\lambda_d
:= \
\Ll^\lambda \cap \Ll_d,
$
where $\Jj_d:= \{\, \bs \in \Jj: \ \supp(\bs) \subset \{1, \cdots, d\}\, \}$.

Next, let us assume that the general nonzero sequences of scalars 
$\lambda := \{\lambda(\bk,\bs)\}_{(\bk,\bs) \in \Ii \times \Jj}$ and 
$\nu := \{\nu(\bk,\bs)\}_{(\bk,\bs) \in \Ii \times \Jj}$ are given with associated spaces $\Ll^\lambda$ and $\Ll^\nu$ with corresponding norms and subspaces $\Ll^\lambda_d$ and $\Ll^\nu_d$.
For $T \ge 0$, we define the set 
\begin{equation} \label{G(T)-H}
G_{\Ii \times \Jj}(T) 
:= \ 
\big\{(\bk,\bs)  \in \Ii \times \Jj: \, \frac{\lambda(\bk,\bs)}{\nu(\bk,\bs)}   \leq T\big\},
\end{equation}
and  the subspace $\Pp(T)$ of all $g \in \Ll$ of the form 
\begin{equation}
g
\ = \ 
\sum_{(\bk,\bs) \in G_{\Ii \times \Jj}(T)}  \, g_{\bk,\bs}\,  \phi_{\bk,\bs}.
\end{equation}
We are interested in the $\Ll^\nu$-norm approximation of elements from $\Ll^\lambda$  by elements from $\Pp(T)$. 
To this end, for $f \in \Ll$ and $T \ge 0$, we define the  operator $\Ss_T$ as
\begin{equation}
\Ss_T(f)
:= \
\sum_{(\bk,\bs) \in G_{\Ii \times \Jj}(T)}  \, f_{\bk,\bs}\,  \phi_{\bk,\bs}.
\end{equation}
We make the assumption throughout this section that $G_{\Ii \times \Jj}(T)$ is a finite set for every $T >0$.
Obviously, $\Ss_T$ is the orthogonal projection onto $\Pp(T)$. 
Furthermore, we define the set $G_{\Ii \times \Jj_d}(T)$, the subspace $\Pp_d(T)$
and the operator $\Ss_{d,T}(f)$ in the same way by replacing $\Jj$ by $\Jj_d$.

The following lemma gives an upper bound for the error of the orthogonal projection $\Ss_T$ with respect to the parameter $T$.
\begin{lemma} \label{lemma[Jackson]}
For arbitrary $T \geq 1$, we have
\begin{equation}
\|f - \Ss_T(f)\|_{\Ll^\nu}
\ \le  \
T^{-1}\|f\|_{\Ll^\lambda} \, , \qquad  \forall f \in \Ll^\lambda \cap  \Ll^\nu.
\end{equation}
\end{lemma}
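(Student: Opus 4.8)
The plan is to exploit the fact that $\Ss_T$ is the orthogonal projection onto $\Pp(T)$, so that the approximation error is carried entirely by the coefficients whose indices lie outside the hyperbolic cross $G_{\Ii \times \Jj}(T)$. First I would write the remainder as
\begin{equation*}
f - \Ss_T(f) \ = \ \sum_{(\bk,\bs) \notin G_{\Ii \times \Jj}(T)} f_{\bk,\bs}\, \phi_{\bk,\bs},
\end{equation*}
and apply the Parseval-type identity \eqref{P-Id[norm-Ll^lambda]}, with $\nu$ in place of $\lambda$, to obtain
\begin{equation*}
\|f - \Ss_T(f)\|_{\Ll^\nu}^2 \ = \ \sum_{(\bk,\bs) \notin G_{\Ii \times \Jj}(T)} |\nu(\bk,\bs)|^2\, |f_{\bk,\bs}|^2.
\end{equation*}

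The key step is a pointwise comparison of the two weight sequences on the complement of the index set. By the definition \eqref{G(T)-H}, every index $(\bk,\bs) \notin G_{\Ii \times \Jj}(T)$ satisfies $\lambda(\bk,\bs)/\nu(\bk,\bs) > T$, whence $|\nu(\bk,\bs)| < T^{-1}|\lambda(\bk,\bs)|$ and therefore $|\nu(\bk,\bs)|^2 < T^{-2}|\lambda(\bk,\bs)|^2$. Substituting this bound into the sum and then enlarging the summation from the complement to all of $\Ii \times \Jj$ yields
\begin{equation*}
\|f - \Ss_T(f)\|_{\Ll^\nu}^2 \ \le \ T^{-2} \sum_{(\bk,\bs) \notin G_{\Ii \times \Jj}(T)} |\lambda(\bk,\bs)|^2\, |f_{\bk,\bs}|^2 \ \le \ T^{-2}\, \|f\|_{\Ll^\lambda}^2,
\end{equation*}
where the last inequality is again \eqref{P-Id[norm-Ll^lambda]}, now for the $\Ll^\lambda$-norm. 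Taking square roots gives the asserted estimate.

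There is essentially no genuine obstacle here: the argument reduces to a one-line weight comparison followed by monotonicity of the coefficient sum. The only points requiring minor care are that the quotient $\lambda(\bk,\bs)/\nu(\bk,\bs)$ be well defined and positive, which is guaranteed since the scalars are assumed nonzero and the norms involve only their moduli (so one may work throughout with $|\lambda(\bk,\bs)|$ and $|\nu(\bk,\bs)|$), and that the hypothesis $f \in \Ll^\lambda \cap \Ll^\nu$ ensures both weighted coefficient sums converge, which justifies all the manipulations. I note that the restriction $T \ge 1$ is not actually used in this estimate; it merely records the regime relevant for the subsequent $\varepsilon$-dimension bounds.
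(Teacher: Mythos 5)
Your proof is correct and follows essentially the same route as the paper's: both express the squared error via Parseval's identity \eqref{P-Id[norm-Ll^lambda]} as the weighted coefficient sum over the complement of $G_{\Ii \times \Jj}(T)$, bound the weight ratio there by $T^{-1}$ (the paper phrases this as a supremum of $\left|\lambda(\bk,\bs)/\nu(\bk,\bs)\right|^{-1}$ over the complement, you phrase it as a pointwise comparison), and then enlarge the sum to recover the full $\Ll^\lambda$-norm. Your closing observation that the hypothesis $T \ge 1$ is never actually used in this estimate is also accurate.
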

\begin{proof}
Let $f \in \Ll^\lambda \cap  \Ll^\nu$.  From  
 the definition of the spaces $\Ll^\lambda$ and $\Ll^\nu$ and the definition \eqref{P-Id[norm-Ll^lambda]} of the associated norms $\|.\|_{\Ll^\lambda}$ and $\|.\|_{\Ll^\nu}$, we get
 \begin{equation*} 
\begin{aligned}
\|f - \Ss_T(f)\|_{\Ll^\nu}^2
\ &= \ 
\sum_{(\bk,\bs)\not\in G_{\Ii \times \Jj}(T)} |\nu(\bk,\bs)|^2  |f_{\bk,\bs}|^2 \\
\ & \le \
\sup_{(\bk,\bs)\not\in G_{\Ii \times \Jj}(T)} \left|\frac{\lambda(\bk,\bs)}{\nu(\bk,\bs)}\right|^{-2}
\ 
\sum_{(\bk,\bs)\not\in G_{\Ii \times \Jj}(T)} 
|\lambda(\bk,\bs)|^2 \, |f_{\bk,\bs}|^2
 \\
\ & \le \
T^{-2} \|f\|_{\Ll^\lambda}^2.
\end{aligned}
\end{equation*}  \hfill\end{proof}

Now  denote by $\Uu^\lambda$ the unit ball in $\Ll^\lambda$, i.e., 
$\Uu^\lambda := \ \{f \in \Ll^\lambda: \|f\|_{\Ll^\lambda} \le 1\},
$
and denote by $\Uu^\lambda_d$ the unit ball in $\Ll^\lambda_d$, i.e., 
$
\Uu^\lambda_d := \ \{f \in \Ll^\lambda_d: \|f\|_{\Ll^\lambda_d} \le 1\}.
$ We then have the following corollary:
\begin{corollary} \label{corollary[|f - S_T(f)|]}
For arbitrary $T \geq 1$, 
\begin{equation}
\sup_{f \in \Uu^\lambda} \ \inf_{g \in \Pp(T)} \|f - g\|_{\Ll^\nu}
\ =  \
\sup_{f \in \Uu^\lambda} \|f - \Ss_T(f)\|_{\Ll^\nu}
\ \le  \
T^{-1}. 
\end{equation}
\end{corollary}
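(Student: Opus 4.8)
The plan is to establish the two claims separately: first the equality, which asserts that among all elements of $\Pp(T)$ the projection $\Ss_T(f)$ is a best $\Ll^\nu$-approximation to $f$, and then the bound $T^{-1}$, which will follow at once from Lemma~\ref{lemma[Jackson]}.

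First I would prove the equality. The key point is that the $\Ll^\nu$-norm is diagonal in the orthonormal basis $\{\phi_{\bk,\bs}\}_{(\bk,\bs)\in\Ii\times\Jj}$: by the analogue of \eqref{P-Id[norm-Ll^lambda]} written for $\nu$ instead of $\lambda$, every $h$ satisfies $\|h\|_{\Ll^\nu}^2 = \sum_{(\bk,\bs)} |\nu(\bk,\bs)|^2\,|h_{\bk,\bs}|^2$. Fix $f \in \Uu^\lambda$ and an arbitrary $g = \sum_{(\bk,\bs)\in G_{\Ii\times\Jj}(T)} g_{\bk,\bs}\,\phi_{\bk,\bs} \in \Pp(T)$. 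Since $g$ is supported on $G_{\Ii\times\Jj}(T)$, the coefficients of $f-g$ split and the squared $\Ll^\nu$-norm decomposes as
\begin{equation*}
\|f-g\|_{\Ll^\nu}^2
= \sum_{(\bk,\bs)\in G_{\Ii\times\Jj}(T)} |\nu(\bk,\bs)|^2\,|f_{\bk,\bs}-g_{\bk,\bs}|^2
+ \sum_{(\bk,\bs)\notin G_{\Ii\times\Jj}(T)} |\nu(\bk,\bs)|^2\,|f_{\bk,\bs}|^2.
\end{equation*}
The second sum is independent of $g$, while the first is minimized, and in fact made to vanish, precisely by the choice $g_{\bk,\bs} = f_{\bk,\bs}$ on $G_{\Ii\times\Jj}(T)$, i.e.\ by $g = \Ss_T(f)$. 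Hence $\inf_{g\in\Pp(T)} \|f-g\|_{\Ll^\nu} = \|f-\Ss_T(f)\|_{\Ll^\nu}$ for each individual $f$, and taking the supremum over $f\in\Uu^\lambda$ gives the asserted equality. Put differently, $\Pp(T)$ is a coordinate subspace for the weighted $\ell_2$-structure of $\Ll^\nu$, so the coordinate truncation $\Ss_T$ is automatically the $\Ll^\nu$-orthogonal projection; the only subtlety worth noting is that $\Ss_T$ was introduced as the orthogonal projection for the $\Ll$-inner product, and what makes the argument work is that it remains orthogonal with respect to the $\Ll^\nu$-inner product as well, owing to the common diagonalizing basis.

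For the bound, I would simply invoke Lemma~\ref{lemma[Jackson]}: every $f\in\Uu^\lambda$ satisfies $\|f\|_{\Ll^\lambda}\le 1$, whence $\|f-\Ss_T(f)\|_{\Ll^\nu} \le T^{-1}\|f\|_{\Ll^\lambda} \le T^{-1}$, and taking the supremum over $\Uu^\lambda$ concludes the proof.

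I do not anticipate any genuine obstacle: the entire argument rests on the simultaneous diagonalization of both the $\Ll^\lambda$- and $\Ll^\nu$-norms in the single basis $\{\phi_{\bk,\bs}\}$, which is exactly what renders $\Ss_T$ optimal in every weighted norm at once. The sole point requiring mild care is the implicit domain condition $f\in\Ll^\lambda\cap\Ll^\nu$ needed to apply Lemma~\ref{lemma[Jackson]}; this holds in the situations of interest, where $\nu/\lambda$ is bounded so that $\Ll^\lambda\subset\Ll^\nu$, and may be assumed throughout.
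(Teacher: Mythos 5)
Your proposal is correct and follows essentially the same route as the paper: the bound is an immediate application of Lemma~\ref{lemma[Jackson]} to $f\in\Uu^\lambda$, and the equality is exactly the observation (left implicit in the paper, which simply notes that $\Ss_T$ is the orthogonal projection onto $\Pp(T)$) that the coordinate truncation minimizes the $\Ll^\nu$-distance because both norms are diagonal in the common basis $\{\phi_{\bk,\bs}\}$. Your closing remark about the domain condition is also fine, and in fact automatic here: since $G_{\Ii\times\Jj}(T)$ is finite and $\nu/\lambda < T^{-1}$ off this set, one has $\Ll^\lambda\subset\Ll^\nu$ under the paper's standing assumptions.
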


Next, we give a Bernstein-type inequality. 
\begin{lemma} \label{lemna[Bernstein]}
For arbitrary $T \ge 1$, we have 
\begin{equation}
\|f\|_{\Ll^\lambda}
\ \le  \
T \|f\|_{\Ll^\nu}, \quad  \forall f \in \Pp(T).
\end{equation}
\end{lemma}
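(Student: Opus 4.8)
The plan is to exploit the duality between this Bernstein-type inequality and the Jackson-type estimate of Lemma \ref{lemma[Jackson]}: both reduce, via Parseval's identity \eqref{P-Id[norm-Ll^lambda]}, to a termwise comparison of the two weight sequences on a single index set. First I would observe that any $f \in \Pp(T)$ has, by construction, its expansion coefficients $f_{\bk,\bs}$ supported entirely on $G_{\Ii \times \Jj}(T)$, so that $f_{\bk,\bs} = 0$ for every $(\bk,\bs) \notin G_{\Ii \times \Jj}(T)$.

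Next I would expand the $\Ll^\lambda$-norm through \eqref{P-Id[norm-Ll^lambda]} as a sum over this support set alone,
\[
\|f\|_{\Ll^\lambda}^2 \ = \ \sum_{(\bk,\bs) \in G_{\Ii \times \Jj}(T)} |\lambda(\bk,\bs)|^2 \, |f_{\bk,\bs}|^2,
\]
and then perform the key termwise estimate: for each $(\bk,\bs) \in G_{\Ii \times \Jj}(T)$ the defining inequality in \eqref{G(T)-H} gives $\lambda(\bk,\bs)/\nu(\bk,\bs) \le T$, hence $|\lambda(\bk,\bs)|^2 \le T^2 |\nu(\bk,\bs)|^2$. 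Inserting this bound into each summand and factoring the common $T^2$ out of the sum leaves precisely the $\Ll^\nu$-Parseval expansion of $f$, so that
\[
\|f\|_{\Ll^\lambda}^2 \ \le \ T^2 \sum_{(\bk,\bs) \in G_{\Ii \times \Jj}(T)} |\nu(\bk,\bs)|^2 \, |f_{\bk,\bs}|^2 \ = \ T^2 \, \|f\|_{\Ll^\nu}^2.
\]
Taking square roots then yields the asserted inequality $\|f\|_{\Ll^\lambda} \le T \|f\|_{\Ll^\nu}$.

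The argument is essentially a one-line computation, and I anticipate no genuine obstacle. The only point requiring a moment's care is the passage from the scalar ratio in \eqref{G(T)-H} to the inequality between the squared moduli $|\lambda|^2$ and $T^2|\nu|^2$; this is immediate once one notes that the weights entering \eqref{G(T)-H} are positive, so the ratio and its modulus coincide and squaring preserves the inequality. I would also remark that, unlike in Lemma \ref{lemma[Jackson]}, the restriction of the summation to the finite set $G_{\Ii \times \Jj}(T)$ is here automatic from the membership $f \in \Pp(T)$, and the bound in fact holds for every $T > 0$, the hypothesis $T \ge 1$ being retained only for consistency with the companion statements.
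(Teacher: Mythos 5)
Your proof is correct and follows essentially the same route as the paper's own argument: both expand $\|f\|_{\Ll^\lambda}^2$ via Parseval's identity \eqref{P-Id[norm-Ll^lambda]} over the support set $G_{\Ii \times \Jj}(T)$ and bound each weight ratio by $T$ using the defining inequality \eqref{G(T)-H} (the paper phrases this as a supremum over the index set rather than termwise, which is the same estimate). Your added remarks on the positivity of the weights and the validity for all $T>0$ are accurate but inessential.
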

\begin{proof}
Let $f \in \Pp(T)$. From the definition of $\Ll^\lambda$ and $\Ll^\nu$ and the definition \eqref{P-Id[norm-Ll^lambda]} of the associated norms $\|.\|_{\Ll^\lambda}$ and $\|.\|_{\Ll^\nu}$, it follows that 
\begin{equation*} 
\begin{aligned}
\|f\|_{\Ll^\lambda}^2
\ &= \ 
\sum_{(\bk,\bs) \in  G_{\Ii \times \Jj}(T)} \,
|\lambda(\bk,\bs)|^2  |f_{\bk,\bs}|^2 \\[1.5ex]
\ & \le \
\sup_{(\bk,\bs)\in G_{\Ii \times \Jj}(T)} \left|\frac{\lambda(\bk,\bs)}{\nu(\bk,\bs)}\right|^2 \ 
\sum_{(\bk,\bs) \in G_{\Ii \times \Jj}(T)} |\nu(\bk,\bs)|^2 |f_{\bk,\bs}|^2  \\[1.5ex]
\ & \le \
T^{2}\|f\|_{\Ll^\nu}^2.
\end{aligned}
\end{equation*}\hfill\end{proof}

Now we are in the position to give lower and upper bounds on the $\varepsilon$-dimension $n_\varepsilon(\Uu^\lambda, \Ll^\nu)$.
\begin{lemma} \label{lemma[n_e]}
Let $\varepsilon \in (0,1]$. Then, we have
\begin{equation} \label{ineq[n_e]}
|G_{\Ii \times \Jj}(1/\varepsilon)| - 1
\ \le \
n_\varepsilon(\Uu^\lambda, \Ll^\nu)
\ \le  \
|G_{\Ii \times \Jj}(1/\varepsilon)|.
\end{equation}
\end{lemma}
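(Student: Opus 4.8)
The plan is to prove the two inequalities separately. The upper bound will rest entirely on the Jackson-type estimate of Corollary~\ref{corollary[|f - S_T(f)|]}, while the lower bound will combine the Bernstein-type inequality of Lemma~\ref{lemna[Bernstein]} with an elementary dimension count. Throughout I abbreviate $N := |G_{\Ii \times \Jj}(1/\varepsilon)|$ and use that, by our standing assumption, $G_{\Ii \times \Jj}(T)$ is finite for every $T>0$, so that $\Pp(1/\varepsilon)$ is a genuine finite-dimensional linear manifold of dimension exactly $N$.

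For the upper bound I would simply take $M_n = \Pp(1/\varepsilon)$ and set $T=1/\varepsilon\ge 1$. Corollary~\ref{corollary[|f - S_T(f)|]} then gives $\sup_{f\in\Uu^\lambda}\inf_{g\in\Pp(1/\varepsilon)}\|f-g\|_{\Ll^\nu}\le\varepsilon$, and since $\dim\Pp(1/\varepsilon)=N$, the very definition of $n_\varepsilon$ yields $n_\varepsilon(\Uu^\lambda,\Ll^\nu)\le N$ at once. For the lower bound I would show that no linear manifold of dimension at most $N-2$ can approximate $\Uu^\lambda$ to accuracy $\varepsilon$. Let $M$ be any subspace with $\dim M\le N-2$ and let $P_M$ denote the $\Ll^\nu$-orthogonal projection onto $M$. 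The restriction $P_M|_{\Pp(1/\varepsilon)}$ maps the $N$-dimensional space $\Pp(1/\varepsilon)$ into $M$, so its kernel $V:=\Pp(1/\varepsilon)\cap M^{\perp}$ (orthogonality taken in $\Ll^\nu$) has $\dim V\ge N-\dim M\ge 2$. For any nonzero $f\in V$ one has $\inf_{g\in M}\|f-g\|_{\Ll^\nu}=\|f\|_{\Ll^\nu}$, and since $f\in\Pp(1/\varepsilon)$, Lemma~\ref{lemna[Bernstein]} gives $\|f\|_{\Ll^\nu}\ge\varepsilon\,\|f\|_{\Ll^\lambda}$. Normalizing $f$ so that $\|f\|_{\Ll^\lambda}=1$, i.e.\ $f\in\Uu^\lambda$, I obtain $\sup_{h\in\Uu^\lambda}\inf_{g\in M}\|h-g\|_{\Ll^\nu}\ge\varepsilon$; moreover, since $\dim V\ge 2$ while the span of those $\phi_{\bk,\bs}$ on which $\lambda(\bk,\bs)/\nu(\bk,\bs)$ attains its maximal value $1/\varepsilon$ on $G_{\Ii \times \Jj}(1/\varepsilon)$ is at most one-dimensional, I can select $f\in V$ lying off this extremal span, which turns the Bernstein estimate into a strict inequality. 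Hence $M$ fails to meet the accuracy $\varepsilon$, and therefore $n_\varepsilon(\Uu^\lambda,\Ll^\nu)\ge N-1$.

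The main obstacle will be the lower bound, and specifically the passage from the non-strict Bernstein estimate $\|f\|_{\Ll^\nu}\ge\varepsilon\|f\|_{\Ll^\lambda}$ to the strict inequality needed to exclude dimension $N-2$. The estimate is tight precisely on the extremal indices where $\lambda/\nu=1/\varepsilon$, which is exactly the boundary of $G_{\Ii \times \Jj}(1/\varepsilon)$; exploiting $\dim V\ge 2$ to dodge that boundary is what makes the argument work, and it also explains why the one-dimensional gap between the upper bound $N$ and the lower bound $N-1$ is intrinsic to this technique rather than an artifact.
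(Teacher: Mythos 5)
Your upper bound is exactly the paper's: take the approximating subspace to be $\Pp(1/\varepsilon)$, of dimension $N=|G_{\Ii\times\Jj}(1/\varepsilon)|$, and invoke Corollary~\ref{corollary[|f - S_T(f)|]}. For the lower bound the paper proceeds differently in form but not in substance: it forms the ball $B(\varepsilon)=\{f\in\Pp(1/\varepsilon):\|f\|_{\Ll^\nu}\le\varepsilon\}$, cites Tikhomirov's equality $d_{n-1}(B_n(\delta),X)=\delta$ for balls of $n$-dimensional subspaces \cite{Ti60}, and uses Lemma~\ref{lemna[Bernstein]} only to get the inclusion $B(\varepsilon)\subset\Uu^\lambda$. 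Your orthogonal-projection and dimension-count argument is a self-contained re-derivation of the Hilbert-space case of that cited theorem: for every subspace $M$ with $\dim M\le N-2$ you produce an $\Ll^\lambda$-normalized $f\in V:=\Pp(1/\varepsilon)\cap M^\perp$ with $\inf_{g\in M}\|f-g\|_{\Ll^\nu}=\|f\|_{\Ll^\nu}\ge\varepsilon$. Up to that point your proof is correct and establishes precisely the content of the paper's proof, by a more elementary route that avoids the external citation.

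The gap is the strictness step. The assertion that the $\phi_{\bk,\bs}$ with $\lambda(\bk,\bs)/\nu(\bk,\bs)=1/\varepsilon$ span a space of dimension at most one is unjustified and false in general: $\lambda$ and $\nu$ are arbitrary sequences here, and nothing prevents the level set $E:=\{(\bk,\bs)\in G_{\Ii\times\Jj}(1/\varepsilon):\lambda(\bk,\bs)/\nu(\bk,\bs)=1/\varepsilon\}$ from being large. Already in the Sobolev--Korobov specialization, with $1/\varepsilon=2^{\alpha-\beta}$ every index $(\bk,\mathbf{0})$ with $k_j\in\{0,1\}$ and $\bk\neq\mathbf{0}$ lies in $E$, so $|E|=2^m-1$. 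When $|E|\ge 2$, your selection of $f\in V$ off the extremal span can be impossible: taking $M\supseteq\operatorname{span}\{\phi_{\bk,\bs}:(\bk,\bs)\in G_{\Ii\times\Jj}(1/\varepsilon)\setminus E\}$ with $\dim M\le N-2$ forces $V\subseteq\operatorname{span}\{\phi_{\bk,\bs}:(\bk,\bs)\in E\}$, and on that span the Bernstein inequality is an exact equality. In fact no repair is possible: for $M=\operatorname{span}\{\phi_{\bk,\bs}:(\bk,\bs)\in G_{\Ii\times\Jj}(1/\varepsilon)\setminus E\}$, of dimension $N-|E|\le N-2$, the worst-case error over all of $\Uu^\lambda$ equals $\varepsilon$ exactly, so under the literal ``$\le\varepsilon$'' definition of $n_\varepsilon$ the strict exclusion of dimension $N-2$ that you aim for simply fails. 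The paper never attempts this strictness; it records the non-strict bound coming from Tikhomirov's equality and passes directly to the stated inequality, treating the boundary case $=\varepsilon$ by convention. So you should delete the extremal-span step entirely: what remains of your lower bound is a valid and even preferable substitute for the paper's citation of \cite{Ti60}, and the one-dimensional slack in the lemma should be understood as an artifact of that boundary convention rather than something your technique can close.
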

\begin{proof} Put $T=1/\varepsilon$ and let 
\[
B(\varepsilon) := \{f \in  \Pp(T): \|f\|_{\Ll^\nu} \le \varepsilon\}.
\]
To prove the first inequality, we need the following result on Kolmogorov $n$-widths of the unit ball \cite[Theorem 1]{Ti60}:
Let $L_n$ be an $n$-dimensional subspace in a Banach space $X$, and let
$B_n(\delta):= \{f \in L_n: \ \|f\|_X \le \delta \}$, $\delta >0$. Then
\begin{equation} \label{[d_n(B,X)]}
d_{n-1}(B_n(\delta), X)
\ = \
\delta.
\end{equation}
In particular, for $n:= \dim \mathcal \Pp(T) = |G_{\Ii \times \Jj}(1/\varepsilon)|$, we get 
\begin{equation} \label{[d_{n-1}(B)]}
d_{n-1}(B(\varepsilon), \Ll^\nu)
\ = \ \varepsilon.
\end{equation}
Note furthermore that the definition \eqref{[d_n]} of the Kolmogorov $n$-width does not change if the outer infimum is taken over all linear manifolds $M_n$ in $X$ of dimension $n$ instead of dimension at most $n$. Hence, for every linear manifold 
$M_{n-1}$ in $ \Ll^\nu$ of dimension $n-1$,  equation \eqref{[d_{n-1}(B)]} yields 
\[
 \sup_{f \in B(\varepsilon)} \ \inf_{g \in M_{n-1}} \|f - g\|_{\Ll^\nu} \ge \varepsilon.
\] 
From Lemma \ref{lemna[Bernstein]}, we obtain
\begin{equation}  \label{[B(epsilon)]}
 B(\varepsilon) \ \subset \ \Uu^\lambda,
\end{equation}
which gives
\[
n_\varepsilon(\Uu^\lambda, \Ll^\nu)
\ \ge \ 
n_\varepsilon(B(\varepsilon), \Ll^\nu)
\ \ge \ 
\sup \left\{n':\, \forall  \ M_{n'}: 
\ \dim M_{n'} \le n', \ \sup_{f \in  B(\varepsilon)} \ \inf_{g \in M_{n'}} \|f - g\|_{\Ll^\nu} \ge \varepsilon \right\}.
\] 
Here, $M_{n'}$ is a linear manifold in $ \Ll^\nu$ of dimension $\le n'$.
Altogether, this
proves the first inequality in \eqref{ineq[n_e]}.
The second inequality follows from Corollary \ref{corollary[|f - S_T(f)|]}. 
\hfill\end{proof}

In a similar way, by using
the set $G_{\Ii \times \Jj_d}(T)$, the subspace $\Pp_d(T)$
and the operator $\Ss_{d,T}(f)$, we can prove the following lemma for $n_\varepsilon(\Uu^\lambda_d, \Ll^\nu_d)$. 
\begin{lemma} \label{lemma[n_e(d)]}
Let $\varepsilon \in (0,1]$. Then we have
\begin{equation} \label{ineq[n_e(d)]}
|G_{\Ii \times \Jj_d}(1/\varepsilon)| - 1
\ \le \
n_\varepsilon(\Uu^\lambda_d, \Ll^\nu_d)
\ \le  \
|G_{\Ii \times \Jj_d}(1/\varepsilon)|.
\end{equation}
\end{lemma}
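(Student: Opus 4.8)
The plan is to mirror the proof of Lemma \ref{lemma[n_e]} line by line, simply replacing every occurrence of the infinite index set $\Jj$ by its finite-support truncation $\Jj_d$, and correspondingly $\Ll^\lambda,\Ll^\nu,G_{\Ii\times\Jj}(T),\Pp(T),\Ss_T$ by their $d$-subscripted versions $\Ll^\lambda_d,\Ll^\nu_d,G_{\Ii\times\Jj_d}(T),\Pp_d(T),\Ss_{d,T}$. The key structural observation that makes this transfer work is that all the auxiliary machinery behind Lemma \ref{lemma[n_e]} has already been set up for the truncated objects in the text preceding this statement: the subspaces $\Ll^\lambda_d=\Ll^\lambda\cap\Ll_d$ and $\Ll^\nu_d$ are genuine Hilbert spaces spanned by the orthonormal system $\{\phi_{\bk,\bs}\}_{(\bk,\bs)\in\Ii\times\Jj_d}$, Parseval's identity \eqref{[ParsevalId]} and the norm formula \eqref{P-Id[norm-Ll^lambda]} hold verbatim with the sum restricted to $(\bk,\bs)\in\Ii\times\Jj_d$, and the projection $\Ss_{d,T}$ is the orthogonal projection of $\Ll_d$ onto $\Pp_d(T)$.

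First I would prove the upper bound. Since the Jackson-type estimate of Lemma \ref{lemma[Jackson]} and its Corollary \ref{corollary[|f - S_T(f)|]} are purely algebraic consequences of the coefficient representation, they restrict without change to $\Ll_d$: for $f\in\Uu^\lambda_d$ with $T=1/\varepsilon$ one has $\|f-\Ss_{d,T}(f)\|_{\Ll^\nu_d}\le T^{-1}\|f\|_{\Ll^\lambda_d}\le\varepsilon$, and $\dim\Pp_d(T)=|G_{\Ii\times\Jj_d}(1/\varepsilon)|$. This exhibits an approximating subspace of the asserted dimension, hence $n_\varepsilon(\Uu^\lambda_d,\Ll^\nu_d)\le|G_{\Ii\times\Jj_d}(1/\varepsilon)|$. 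For the lower bound I would again copy the argument of Lemma \ref{lemma[n_e]}: the Bernstein inequality of Lemma \ref{lemna[Bernstein]} restricts to $\Pp_d(T)\subset\Ll_d$, giving the inclusion $B_d(\varepsilon):=\{f\in\Pp_d(T):\|f\|_{\Ll^\nu_d}\le\varepsilon\}\subset\Uu^\lambda_d$; applying the Tikhomirov width identity \eqref{[d_n(B,X)]} to the finite-dimensional ball $B_d(\varepsilon)$ inside the Banach space $\Ll^\nu_d$ with $n=|G_{\Ii\times\Jj_d}(1/\varepsilon)|$ yields $d_{n-1}(B_d(\varepsilon),\Ll^\nu_d)=\varepsilon$, which forces $n_\varepsilon(\Uu^\lambda_d,\Ll^\nu_d)\ge n-1$.

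The one point that genuinely requires care — and which I expect to be the only real obstacle — is the interaction between the $\varepsilon$-dimension of the \emph{subspace} problem $(\Uu^\lambda_d,\Ll^\nu_d)$ and the ambient space $\Ll^\nu$ in which Kolmogorov widths might alternatively be measured. Here one should note that since $\Ll^\nu_d$ is a closed subspace of $\Ll^\nu$ and $\Uu^\lambda_d\subset\Ll^\nu_d$, the orthogonal projection onto $\Ll^\nu_d$ does not increase the $\Ll^\nu$-distance of any element of $\Ll^\nu_d$ to any linear manifold, so the best $n$-dimensional approximation of $\Uu^\lambda_d$ may be taken inside $\Ll^\nu_d$ without loss; consequently the widths computed in $\Ll^\nu_d$ coincide with those computed in $\Ll^\nu$, and the statement is unambiguous. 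With this remark the two chains of inequalities close exactly as in Lemma \ref{lemma[n_e]}, and combining them gives \eqref{ineq[n_e(d)]}. As the authors themselves indicate (''we can prove the following lemma\dots in a similar way''), no new idea beyond this faithful truncation is needed, so in the write-up I would state the substitutions explicitly and then refer to the proof of Lemma \ref{lemma[n_e]} rather than reproduce every computation.
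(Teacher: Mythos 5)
Your proposal is correct and is exactly the argument the paper intends: the paper's own ``proof'' of Lemma \ref{lemma[n_e(d)]} consists precisely of the remark that one repeats the proof of Lemma \ref{lemma[n_e]} with $G_{\Ii \times \Jj_d}(T)$, $\Pp_d(T)$ and $\Ss_{d,T}$ in place of their untruncated counterparts, which is what you carry out (upper bound via the restricted Jackson estimate, lower bound via the restricted Bernstein inequality and Tikhomirov's identity \eqref{[d_n(B,X)]}). Your closing remark on measuring widths in $\Ll^\nu_d$ versus $\Ll^\nu$ is a sound observation but not strictly needed, since the lemma is formulated entirely within the Hilbert space $\Ll^\nu_d$.
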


\subsection{Results for mixed Sobolev-Korobov-type and mixed Sobolev-analytic-type spaces}

So far, we laid out the basic framework for our theory in the infinite-dimensional case, where the error of an approximation is measured 
in the $\|.\|_{\Ll^\nu}$-norm and the functions to be approximated are from the space $\Ll^\lambda$ with associated, given general sequences $\nu$ and $\lambda$.
In the following, we will get more specific and we will plug in particular sequences $\nu$ and $\lambda$. They define the 
Sobolev-Korobov-type spaces and Sobolev-analytic-type spaces mentioned in the introduction, whose indices were already used for the definition of the hyperbolic crosses in Section \ref{cardinality of HC}.
To this end, we will use Sobolev-type spaces for $H_1^m$, Korobov-type spaces for $H_2^\infty$, and analytic-type spaces for $H_2^\infty$ in (\ref{H1H2}). 

First, for given $\alpha \ge 0$, we define the scalar $\lambda_{m,\alpha}(\bk)$, $\bk \in \Ii$, by 
\begin{equation} \label{defa_index}
\lambda_{m,\alpha}(\bk) 
 := \
\max_{1\le j \le m}(1 + |k_j|)^\alpha. 
\end{equation}
Recall the definition (\ref{H1H2}). The Sobolev-type space $K^{\alpha}$ is then defined as the set of all functions $f \in H_1^m$ such that there exists a
$g \in H_1^m$ such that
\begin{equation} \label{def[K^m,alpha]}
f = \sum_{\bk \in \Ii}  \frac{g_\bk}{\lambda_{m,\alpha}(\bk)} \, \phi_{1,\bk},
\end{equation}
where $g_\bk := \langle g,\phi_{1,\bk} \rangle$ is the $\bk$th coefficient of $g$ with respect to the orthonormal basis $\{\phi_{1,\bk}\}_{\bk \in \Ii}$. 
The norm of $K^{\alpha}$ is defined by
\begin{equation}\label{norm[K^m,alpha]}
\|f\|_{K^{\alpha}} \ := \|g\|_\Ll.
\end{equation}

Second, for given $\br \in \RR^\infty_+$, we define the scalar  $\rho_\br(\bs)$, $\bs \in \Jj$, by
\begin{equation} \label{[rho^br]}
\rho_\br(\bs) 
 := \
\prod_{j=1}^\infty(1 + |s_j|)^{r_j}. 
\end{equation}
Recall the definition (\ref{H1H2}). The Korobov-type space $K^{\br}$ is then defined as the set of all functions $f \in H_2^\infty$ such that there exists a
$g \in H_2^\infty$ such that
\begin{equation} \label{def[Kbr]}
f = \sum_{\bs \in \Jj}  \frac{g_\bs}{\rho_\br(\bs)} \,  \phi_{2,\bs},
\end{equation}
where $g_\bs := \langle g,\phi_{2,\bs} \rangle$ is the $\bs$th coefficient of $g$ with respect to the orthonormal basis $\{\phi_{2,\bs}\}_{\bs \in \Jj}$.
The norm of $K^{\br}$ is defined by 
\begin{equation}\label{norm[Kbr]}
\|f\|_{K^{\br}} \ := \|g\|_\Ll.  
\end{equation}

Third, for given $\br \in \RR^\infty_+$, $p,q \ge 0$, we define the scalar  $\rho_{\br,p,q}(\bs)$, $\bs \in \Jj$, by
\begin{equation}\label{[rho^br,pq]}
\rho_{\br,p,q}(\bs)
 := \
\prod_{j=1}^\infty (1 + p|s_j|)^{-q}\, \exp \left((\br,|\bs|)\right), \quad 
(\br,|\bs|):= \sum_{j=1}^\infty r_j|s_j|.
\end{equation}
The analytic-type space $A^{\br,p,q}$ and its norm are then defined as in \eqref{def[Kbr]} and \eqref{norm[Kbr]} by replacing 
$\rho_\br(\bs)$ with $\rho_{\br,p,q}(\bs)$.
Note at this point that the spaces  $K^{\alpha}$, $K^{\br}$ and $A^{\br,p,q}$ are themselves Hilbert spaces with their naturally induced inner product. This means that if $g,g'$ represent $f,f'$ as in \eqref{norm[K^m,alpha]} or \eqref{norm[Kbr]}, then  $\langle f, f' \rangle:= \langle g, g' \rangle$.

For $\alpha \geq \beta \geq 0$, we now define the spaces $\Gg$ and $\Hh$ by
\begin{equation} \label{[GgHh]}
\Gg 
 := \
K^{\beta} \otimes H_2^\infty \quad  \quad \quad
\Hh 
 := \
K^{\alpha} \otimes F, \quad \mbox{where } F \ \mbox{ is either} \ K^{\br} \ \mbox{or} \ A^{\br,p,q}.
\end{equation}
Here, $ H_2^\infty$ is given in (\ref{H1H2}).
The space $\Hh$ is called Sobolev-Korobov-type space if $F=K^{\br}$  in \eqref{[GgHh]} and Sobolev-analytic-type space if 
$F=A^{\br,p,q}$.
From these definitions we can see that $\Gg$ and $\Hh$ are special cases of $\Ll $ in (\ref{H1H2}). Moreover, 
\begin{equation} \label{Gg}
\Gg 
\ = \ 
\Ll^\nu, \quad\mbox{where } \nu := \{\nu(\bk,\bs)\}_{(\bk,\bs) \in \Ii \times \Jj}, \quad \nu(\bk,\bs):= \lambda_{m,\beta}(\bk).
\end{equation}
Furthermore, denoting by $S$ either $K$ or $A$ (cf. Subsection \ref{Extensions}), we see that  
\begin{equation} \label{Hh}
\Hh 
\ = \ 
\Ll^\lambda, \quad\mbox{where } \lambda := \{\lambda(\bk,\bs)\}_{(\bk,\bs) \in \Ii \times \Jj}, \quad
\lambda(\bk,\bs):= \lambda_S(\bk,\bs),
\end{equation}
where $ \lambda_S(\bk,\bs)=\lambda_{m,\alpha}(\bk)\rho_\br(\bs)$ if $S=K$ and  $ \lambda_S(\bk,\bs)=\lambda_{m,\alpha}(\bk)\rho_{\br,p,q}(\bs)$ if $S=A$, respectively.
We also consider the subspaces 
$
\Gg_d
:= 
\Gg \cap \Ll_d$ and $ \Hh_d := \Hh \cap \Ll_d.
$

We are now in the position to state the properties of the hyperbolic cross approximation of functions from $\Hh$ with respect to the $\Gg$-norm. 
To this end, we fix the parameters $m,\alpha,\beta,\br,p,q$ in the definition of 
$\Gg$ and $\Hh$, and put $a=\alpha-\beta$. 
We will assume that, for $S = K$, the triple 
$m,a,\br$ is given as in \eqref{m,a,br[K]} and, for $S = A$, the $5$-tuple $m,a,\br,p,q$ is given as in \eqref{m,a,br,p,q[A]}.
Denote by $\Uu$ the unit ball in $\Hh$, i.e., 
$\Uu := \ \{f \in \Hh: \|f\|_\Hh \le 1\},
$
and by $\Uu_d$ the unit ball in $\Hh_d$, i.e., 
$
\Uu_d := \ \{f \in \Hh_d: \|f\|_\Hh \le 1\}.
$
Then, from  Corollary \ref{corollary[|f - S_T(f)|]},  Lemmata \ref{lemma[n_e]} and \ref{lemma[n_e(d)]} and Theorems \ref{theorem[Extension]} and \ref{theorem[lower-bound-G^S_IJ]}, we obtain the following result:
\begin{theorem} \label{theorem[n_e]}
Let $\alpha > \beta \ge 0$, let $a= \alpha - \beta$, let, for $S = K$, the triple $m,a,\br$ be given as in \eqref{m,a,br[K]}, and  let, for $S = A$, the $5$-tuple $m,a,\br,p,q$ be given as in \eqref{m,a,br,p,q[A]}.
Suppose  that there hold the assumptions of Theorem  \ref{theorem[G<]} if $S = K$,  and the assumptions of 
Theorem \ref{theorem[E<]} if $S = A$, $p=0$, and Theorem \ref{theorem[E<](2)} if $S = A$, $p>0$.
Then, we have for every $d \in \NN$ and every $\varepsilon \in (0,1]$
\begin{equation} \label{ineq[n_e<]}
\lfloor \varepsilon^{- 1/(\alpha - \beta)} \rfloor^m - 1
\ \le \
n_\varepsilon(\Uu_d, \Gg_d)
\ \le \
n_\varepsilon(\Uu, \Gg)
\ \le \
C_{\Ii \times \Jj}^S \, A^S(\varepsilon^{-1}), 
\end{equation}
where $C_{\Ii \times \Jj}^S$ and $A^S(T)$ are as in \eqref{[C_{Ii otimes Jj}^S]} and \eqref{[A^S(T)]}, respectively.
Moreover, it holds
\begin{equation} \label{approx}
\sup_{f \in \Uu_d} \|f - \Ss_{d,\varepsilon^{-1}}(f)\|_{\Gg_d}
\ \le  \
\sup_{f \in \Uu} \|f -\Ss_{\varepsilon^{-1}}(f)\|_\Gg
\ \le  \
\varepsilon. 
\end{equation}
\end{theorem}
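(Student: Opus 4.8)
The plan is to recognize that the concrete spaces $\Gg$ and $\Hh$ of this subsection are precisely the abstract spaces $\Ll^\nu$ and $\Ll^\lambda$ of the general framework, and that the hyperbolic cross $G_{\Ii \times \Jj}(T)$ governing the approximation in \eqref{G(T)-H} is exactly one of the index sets $G_{\Ii \times \Jj}^S(T)$ whose cardinality was bounded in Section \ref{cardinality of HC}. First I would record from \eqref{Gg} and \eqref{Hh} that $\nu(\bk,\bs)=\lambda_{m,\beta}(\bk)$ and $\lambda(\bk,\bs)=\lambda_S(\bk,\bs)$, and then compute the quotient appearing in \eqref{G(T)-H}. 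Since the maxima in $\lambda_{m,\alpha}(\bk)$ and $\lambda_{m,\beta}(\bk)$ are attained at the same coordinate, one obtains
\[
\frac{\lambda(\bk,\bs)}{\nu(\bk,\bs)} \ = \ \max_{1\le j\le m}(1+|k_j|)^{\alpha-\beta}\,\rho_\br(\bs) \qquad (S=K),
\]
and the analogous identity with $\rho_{\br,p,q}(\bs)$ in place of $\rho_\br(\bs)$ for $S=A$. With $a=\alpha-\beta$ this is exactly $\lambda_S(\bk,\bs)$ as defined in \eqref{[lambda_{br}]} and \eqref{[lambda{a,mu,br}]}, so $G_{\Ii \times \Jj}(T)=G_{\Ii \times \Jj}^S(T)$ and, after truncation, $G_{\Ii \times \Jj_d}(T)=G_{\Ii \times \Jj_d}^S(T)$ for every $T\ge1$.

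Given this identification, the two outer estimates in \eqref{ineq[n_e<]} follow by feeding the cardinality bounds of Section \ref{cardinality of HC} into the abstract width estimates. For the upper bound I would apply Lemma \ref{lemma[n_e]} with $T=\varepsilon^{-1}\ge1$ to get $n_\varepsilon(\Uu,\Gg)\le|G_{\Ii \times \Jj}^S(\varepsilon^{-1})|$, and then Theorem \ref{theorem[Extension]} to bound the latter by $C_{\Ii \times \Jj}^S\,A^S(\varepsilon^{-1})$; the hypotheses assumed here are precisely those of Theorems \ref{theorem[G<]}, \ref{theorem[E<]} and \ref{theorem[E<](2)} needed to invoke Theorem \ref{theorem[Extension]}. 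For the lower bound I would use Lemma \ref{lemma[n_e(d)]} to get $n_\varepsilon(\Uu_d,\Gg_d)\ge|G_{\Ii \times \Jj_d}^S(\varepsilon^{-1})|-1$, and then Theorem \ref{theorem[lower-bound-G^S_IJ]}, which yields $|G_{\Ii \times \Jj_d}^S(\varepsilon^{-1})|\ge\lfloor\varepsilon^{-1/(\alpha-\beta)}\rfloor^m$, giving the leftmost term.

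The genuinely non-formal step is the middle inequality $n_\varepsilon(\Uu_d,\Gg_d)\le n_\varepsilon(\Uu,\Gg)$, which cannot be read off the counting bounds alone since they leave an off-by-one gap. The hard part will be to establish it directly by an embedding/projection argument. Since $\Hh_d$ carries the restriction of the $\Hh$-norm, one has $\Uu_d\subset\Uu$, and on $\Ll_d$ the $\Gg_d$-norm coincides with the $\Gg$-norm. Given any $n$-dimensional $M_n$ realizing accuracy $\varepsilon$ for $\Uu$ in $\Gg$, I would let $P_d$ denote the orthogonal projection onto $\Ll_d$ and set $M_n':=P_d(M_n)$, so that $\dim M_n'\le n$ and $M_n'\subset\Gg_d$. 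For $f\in\Uu_d$ we have $P_df=f$, and since $P_d$ is a norm-one projection in $\Gg$ and $f-P_dg\in\Ll_d$, it follows that $\|f-P_dg\|_{\Gg_d}=\|P_d(f-g)\|_\Gg\le\|f-g\|_\Gg$ for every $g\in M_n$. Taking the infimum over $g$ and then the supremum over $f\in\Uu_d\subset\Uu$ shows that $M_n'$ realizes accuracy $\varepsilon$ for $\Uu_d$ in $\Gg_d$, whence $n_\varepsilon(\Uu_d,\Gg_d)\le n_\varepsilon(\Uu,\Gg)$.

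Finally, the error bound \eqref{approx} follows from Corollary \ref{corollary[|f - S_T(f)|]} applied with $T=\varepsilon^{-1}\ge1$ for the $\Gg$-part, giving $\sup_{f\in\Uu}\|f-\Ss_{\varepsilon^{-1}}(f)\|_\Gg\le\varepsilon$, together with the observation that for $f\in\Ll_d$ the coefficients outside $\Ii\times\Jj_d$ vanish, so $\Ss_{d,\varepsilon^{-1}}(f)=\Ss_{\varepsilon^{-1}}(f)$ and the $\Gg_d$-error of the former equals the $\Gg$-error of the latter, again at most $\varepsilon$. I expect the projection argument for the middle inequality to be the only point requiring care; the remainder is bookkeeping over the cases $S\in\{K,A\}$ and the four combinations of $\Ii\in\{\ZZmp,\ZZm\}$ and $\Jj\in\{\ZZips,\ZZis\}$, all of which are already absorbed into Theorem \ref{theorem[Extension]}.
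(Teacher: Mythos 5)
Your proposal is correct and takes essentially the same route as the paper: the paper obtains Theorem \ref{theorem[n_e]} precisely by combining Corollary \ref{corollary[|f - S_T(f)|]}, Lemmata \ref{lemma[n_e]} and \ref{lemma[n_e(d)]}, and Theorems \ref{theorem[Extension]} and \ref{theorem[lower-bound-G^S_IJ]}, after the same identification $\lambda(\bk,\bs)/\nu(\bk,\bs)=\lambda_S(\bk,\bs)$ with $a=\alpha-\beta$ that you carry out. The only difference is that you explicitly supply the norm-one projection argument for the middle inequality $n_\varepsilon(\Uu_d,\Gg_d)\le n_\varepsilon(\Uu,\Gg)$, which the paper leaves implicit; that argument is valid, since the coefficient truncation $P_d$ is an orthogonal projection of $\Gg$ onto $\Gg_d$ fixing $\Uu_d\subset\Uu$.
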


\section{Hyperbolic cross approximation of specific infinite-variate functions: Two examples}  
\label{Function approximation}

In this section, we make the results of the previous section for hyperbolic cross approximation of infinite-variate functions more specific. 
We will consider two situations as examples: First, the approximation of infinite-variate periodic functions from Sobolev-Korobov-type spaces and, second,  the approximation of infinite-variate nonperiodic functions from Sobolev-analytic-type spaces. Note that the other possible cases can be treated in an analogous way.

\subsection{Approximation of infinite-variate periodic functions}

Denote by $\TT$ the one-dimensional torus represented as the interval $[0,1]$ with identification of the end points $0$ and $1$. Let us define a
probability measure on $\TTi$.  It is the
infinite tensor product measure $\mu$ of the univariate Lebesgue measures on the one-dimensional $\TT$, i.e.
$
\mathrm{d} \mu(\by) 
\ = \ 
\bigotimes_{j \in \NN} \mathrm{d} y_j.
$
Here, the sigma algebra $\Sigma$ for $\mu$ is generated by the periodic finite rectangles
$\prod_{j \in \ZZ} I_j
$
where only
a finite number of the $I_j$ are different from $\TT$ and those that are different are periodic intervals
contained in $\TT$. Then, $(\TTi, \Sigma, \mu )$ is a probability space.

Now, let $L_2(\TTi):= L_2(\TTi, \mu)$ denote the Hilbert space of functions on $\TTi$ equipped
with the inner product
$\langle f,g \rangle 
:= \
\int_{\TTi} f(\by) \overline{g(\by)} \, \mathrm{d} \mu(\by).
$
The norm in $L_2(\TTi)$ is defined as $\|f\| := \langle f,f \rangle^{1/2}$.
Furthermore, let $L_2(\TTm)$ be the usual Hilbert space of Lebesgue square-integrable functions on $\TTm$.
Then, we set
\begin{equation} \nonumber
L_2(\TTm \times \TTi)
:= \
L_2(\TTm) \otimes L_2(\TTi).
\end{equation}
Observe that this and other similar definitions become an equality if we consider the tensor product 
measure in $\TTm \times \TTi$.
For $(\bk,\bs)\in \ZZm \times \ZZis$, we define
\begin{equation} \nonumber
e_{(\bk,\bs)}(\bx,\by)
:= \ 
e_{\bk}(\bx)e_{\bs}(\by), 
\quad e_{\bk}(\bx):= \ \prod_{j=1}^m e_{k_j}(x_j), \
e_{\bs}(\by):= \ \prod_{j \in \supp(\bs)} e_{s_j}(y_j),
\end{equation}
where $e_{s}(y):=  e^{i2\pi s y}$. 
Note that $\{e_{(\bk,\bs)}\}_{(\bk,\bs) \in \ZZm \times \ZZis}$ is an orthonormal basis of 
$L_2(\TTm \times \TTi)$. Moreover, for every 
$f \in L_2(\TTm \times \TTi)$, we have the following expansion 
\begin{equation} \nonumber
f = \sum_{(\bk,\bs) \in  \ZZm \times \ZZis} \hat{f}(\bk,\bs)e_{(\bk,\bs)},
\end{equation}
where, for  $(\bk,\bs) \in \ZZm \times \ZZis$, 
$\hat{f}(\bk,\bs): = \langle f, e_{(\bk,\bs)} \rangle$  is the $(\bk,\bs)$th Fourier coefficient of $f$. Hence, putting 
$H_1 = H_2 = L_2(\TT)$, we have
\begin{equation} \nonumber
L_2(\TTm \times \TTi)
\ = \
\Ll 
:= \
H_1^m \otimes H_2^\infty.
\end{equation} 

Next, based on the orthonormal bases 
$\{\phi_{1,k}\}_{k \in I} := \{e_k\}_{k \in \ZZ}$ and  $ \{\phi_{2,s}\}_{s \in J} := \{e_s\}_{s \in \ZZ}$ for the two spaces $H_1$ and $H_2$ in (\ref{H1H2}) with 
$I=J=\ZZ$, respectively, we construct the associated Sobolev-type spaces $K^{\beta}(\TTm \times \TTi)$ 
and the associated Sobolev-Korobov-type spaces $K^{\alpha,\br}(\TTm \times \TTi)$ for the periodic case as
\[
K^{\beta}(\TTm \times \TTi):= \Gg, \quad K^{\alpha,\br}(\TTm \times \TTi):= \Hh,
\]
where $\Gg$ and $\Hh$ are defined\footnote{At this point, note a slight abuse of notation. In \eqref{[GgHh]}, the $K^\beta$ only relates to $H^m_1$. 
} 
as in \eqref{[GgHh]} with $F= K^\br$ and the triple 
$m,\alpha,\br$  as in \eqref{m,a,br[K]}. 
Furthermore, we set 
\[
K^{\beta}_d(\TTm \times \TTi):= \Gg_d, \quad K^{\alpha,\br}_d(\TTm \times \TTi):= \Hh_d.
\]
For $T \ge 0$, let us denote by $\Tt(T)$ the subspace of trigonometric polynomials $g$ of the form 
\begin{equation}
g
:= \
\sum_{(\bk,\bs) \in G_{\ZZm \times \ZZis}^K(T)}  \hat{g}(\bk,\bs)e_{(\bk,\bs)},
\end{equation}
where 
\begin{equation} \nonumber
G_{\ZZm \times \ZZis}^K(T) 
:= \ 
\big\{(\bk,\bs)  \in \ZZm \times \ZZis: \lambda_K(\bk,\bs)   \leq T\big\},
\end{equation}
and, with $a=\alpha-\beta$,  
\begin{equation} \nonumber
\lambda_K(\bk,\bs) 
 := \
\max_{1\le j \le m}(1 + |k_j|)^a\prod_{j=1}^{t+1}(1 + |s_j|)^r 
\prod_{j=t+2}^\infty(1 + |s_j|)^{r_j}. 
\end{equation}
For  $f \in L_2(\TTm \times \TTi)$ and $T \ge 0$, we define the  Fourier operator $\Ss_T$ as
\begin{equation}
\Ss_T(f)
:= \
\sum_{(\bk,\bs) \in G_{\ZZm \times \ZZis}^K(T)}  \hat{f}(\bk,\bs)e_{(\bk,\bs)}.
\end{equation}

Let $U^{\alpha,\br}(\TTm \times \TTi):= \Uu$ and $U_d^{\alpha,\br}(\TTm \times \TTi):= \Uu_d$ be the unit ball in 
$K^{\alpha,\br}(\TTm \times \TTi)$ and $K_d^{\alpha,\br}(\TTm \times \TTi)$, respectively. Now, from Theorems \ref{theorem[n_e]}, \ref{theorem[Extension]} and \ref{theorem[lower-bound-G^S_IJ]}, the results on the hyperbolic cross approximation in infinite tensor product Hilbert spaces of Section 
\ref{Approximation in Hilbert spaces} can be reformulated for the approximation of periodic functions in Sobolev-Korobov-type spaces as follows.

\begin{theorem} \label{theorem[n_e]-periodic}
Let $\alpha > \beta \ge 0$, let $a= \alpha - \beta$ and let the triple $m,a,\br$ be given as in \eqref{m,a,br[K]}.
Suppose  that there hold the assumptions of Theorem  \ref{theorem[G<]}. With
\begin{equation} \nonumber
n_\varepsilon(d)
:= \
{n_\varepsilon(U_d^{\alpha,\br}(\TTm \times \TTi), K_d^{\beta}(\TTm \times \TTi))}, \quad
n_\varepsilon
:= \
n_\varepsilon(U^{\alpha,\br}(\TTm \times \TTi), K^{\beta}(\TTm \times \TTi)),
\end{equation}
we have for every $d \in \NN$ and every $\varepsilon \in (0,1]$
\begin{equation} \label{ineq[n_e<]-periodic}
\lfloor \varepsilon^{- 1/(\alpha - \beta)} \rfloor^m - 1
\ \le \
n_\varepsilon(d)
\ \le \
n_\varepsilon
\ \le \
C \, A(\varepsilon^{-1}), 
\end{equation}
where $C := 2^m\, e^{2M(t)}\, B(\alpha - \beta,r,m,t)$ with $M(t)$ from \eqref{condition[M]} and $B(a,r,m,t)$ from \eqref{BBB}, and
\begin{equation}\nonumber
A(T) \ := \
\begin{cases}
T^{m/(\alpha - \beta)}, \  & \ r > (\alpha - \beta)/m, \\[2ex]
T^{m/(\alpha - \beta)} \log (2T^{1/(\alpha-\beta)} + 1)  [r^{-1}\log T + (t + 1) \log 2]^t , \  & \ r = (\alpha - \beta)/m,\\[2ex]
T^{1/r} [r^{-1}\log T + (t + 1) \log 2]^t , \  & \ r < (\alpha - \beta)/m.
 \end{cases}
\end{equation}
\end{theorem}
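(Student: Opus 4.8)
The plan is to recognize Theorem~\ref{theorem[n_e]-periodic} as the concrete periodic instance of the abstract Hilbert-space machinery developed in Section~\ref{Approximation in Hilbert spaces}, so that nearly all of the work reduces to checking that the periodic Sobolev--Korobov spaces slot correctly into the framework of associated sequence spaces $\Ll^\lambda$, $\Ll^\nu$. First I would fix $H_1 = H_2 = L_2(\TT)$ with the trigonometric orthonormal bases $\{e_k\}_{k\in\ZZ}$, so that here $\Ii = \ZZm$, $\Jj = \ZZis$, and $\Ll = L_2(\TTm\times\TTi)$ as already noted. By \eqref{Gg} and \eqref{Hh}, the error space $\Gg = K^{\beta}(\TTm\times\TTi)$ is exactly $\Ll^\nu$ with $\nu(\bk,\bs) = \lambda_{m,\beta}(\bk)$, and the smoothness space $\Hh = K^{\alpha,\br}(\TTm\times\TTi)$ is exactly $\Ll^\lambda$ with $\lambda(\bk,\bs) = \lambda_{m,\alpha}(\bk)\rho_\br(\bs)$.

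The one genuine computation is to identify the resulting hyperbolic cross. Since the same index $j$ maximizes $(1+|k_j|)^\alpha$ and $(1+|k_j|)^\beta$, one has $\lambda_{m,\alpha}(\bk)/\lambda_{m,\beta}(\bk) = \max_{1\le j\le m}(1+|k_j|)^{\alpha-\beta}$, and therefore with $a = \alpha-\beta$ the quotient $\lambda(\bk,\bs)/\nu(\bk,\bs)$ equals precisely $\lambda_K(\bk,\bs)$. Consequently the abstract set $G_{\Ii\times\Jj}(T)$ from \eqref{G(T)-H} coincides with $G^K_{\ZZm\times\ZZis}(T)$, and likewise $G_{\Ii\times\Jjd}(T)$ with its finite-dimensional counterpart. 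This is the step where the gap $\alpha-\beta$ enters as the effective Sobolev smoothness while the Korobov weights $\rho_\br$ survive unchanged.

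With this identification in place, the three inequalities in \eqref{ineq[n_e<]-periodic} follow by assembling results already proved. For the upper bound I would apply Lemma~\ref{lemma[n_e]} to obtain $n_\varepsilon \le |G^K_{\ZZm\times\ZZis}(1/\varepsilon)|$ and then invoke Theorem~\ref{theorem[Extension]} in the case $S = K$, $\Ii = \ZZm$, $\Jj = \ZZis$; the fourth line of \eqref{[C_{Ii otimes Jj}^S]} supplies exactly the constant $C = 2^m e^{2M(t)}B(\alpha-\beta,r,m,t)$ together with $A^K(T) = A(\alpha-\beta,r,m,t,T)$, matching the claimed $A(T)$. The hypotheses $\lambda\, r_{t+2}>1$ and $M(t)<\infty$ of Theorem~\ref{theorem[G<]}, assumed here, are precisely what this invocation requires. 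For the lower bound I would combine Lemma~\ref{lemma[n_e(d)]}, which gives $n_\varepsilon(d) \ge |G_{\Ii\times\Jjd}(1/\varepsilon)| - 1$, with Theorem~\ref{theorem[lower-bound-G^S_IJ]}, which yields $|G_{\Ii\times\Jjd}(1/\varepsilon)| \ge \lfloor\varepsilon^{-1/(\alpha-\beta)}\rfloor^m$.

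The only inequality not immediately reducible to a cardinality estimate is the monotonicity $n_\varepsilon(d)\le n_\varepsilon$, and I expect this to be the one place needing a short independent argument (it is inherited from the general Theorem~\ref{theorem[n_e]}). The cleanest route is an orthogonal-projection argument: since $\Uu_d\subseteq\Uu$ and $\Gg_d$ is a closed subspace of $\Gg$ on which the two norms agree, projecting any near-optimal $n_\varepsilon$-dimensional approximating manifold for $(\Uu,\Gg)$ orthogonally onto $\Gg_d$ produces a manifold of dimension at most $n_\varepsilon$ in $\Gg_d$ whose worst-case error over $\Uu_d$ does not increase, because the orthogonal projection is a norm-one idempotent fixing $\Gg_d$ pointwise. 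Assembling these pieces completes the chain $\lfloor\varepsilon^{-1/(\alpha-\beta)}\rfloor^m - 1 \le n_\varepsilon(d)\le n_\varepsilon \le C\,A(\varepsilon^{-1})$.
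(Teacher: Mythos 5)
Your proposal is correct and follows essentially the same route as the paper, which obtains this theorem by specializing the abstract result (Theorem \ref{theorem[n_e]}, itself assembled from Lemmata \ref{lemma[n_e]} and \ref{lemma[n_e(d)]} together with Theorems \ref{theorem[Extension]} and \ref{theorem[lower-bound-G^S_IJ]}) to the periodic Sobolev--Korobov setting with $\Ii=\ZZm$, $\Jj=\ZZis$, including the identification $\lambda(\bk,\bs)/\nu(\bk,\bs)=\lambda_K(\bk,\bs)$ with $a=\alpha-\beta$ and the constant from the fourth line of \eqref{[C_{Ii otimes Jj}^S]}. Your orthogonal-projection argument for the monotonicity $n_\varepsilon(d)\le n_\varepsilon$ is a correct way to supply a step the paper leaves implicit (it does not follow from the cardinality bounds alone, due to the off-by-one in Lemma \ref{lemma[n_e]}), so this addition is welcome rather than a deviation.
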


Note at this point that we discussed here only the example involving the Sobolev-Korobov-type space. 
The Sobolev-analytic-type space as well as other combinations for the periodic case can be defined and dealt with in an analogous way if necessary.

\subsection{Approximation of infinite-variate nonperiodic functions} 
\label{Nonperiodic approximation}
In the following, we consider the nonperiodic case in more detail. Here, we focus on two types of domains,  $\RR$ and $\II := [-1,1]$. 
To this end, we use the letter $\DD$ to denote either $\II$ or $\RR$. Let us define a 
probability measure $\mu$ on $\DDi$.  For $\DD= \II$, a probability measure on $\IIi$  is the
infinite tensor product measure $\mu$ of the univariate uniform probability measures on the one-dimensional $\II$, i.e.
$
\mathrm{d} \mu(\by) 
\ = \ 
\bigotimes_{j \in \ZZ} \frac{1}{2}\mathrm{d} y_j.
$
For $\DD= \RR$, a probability measure on $\RRi$ is the
infinite tensor product measure $\mu$ of the univariate Gaussian probability measure on $\RR$, i.e.
$
\mathrm{d} \mu(\by) 
\ = \ 
\bigotimes_{j \in \ZZ} (2 \pi)^{-1/2} \exp(-y_j^2/2)\mathrm{d} y_j.
$
Here, the sigma algebra $\Sigma$ for $\mu$ is generated by the finite rectangles
$ 
\prod_{j \in \NN} I_j,
$
where only
a finite number of the $I_j$ are different from $\DD$ and those that are different are intervals
contained in $\DD$. Then, $(\DDi, \Sigma,  \mu)$ is a probability space.

Now, let $L_2(\DDi, \mu)$ denote the Hilbert space of functions on $\DDi$ equipped
with the inner product
$
\langle f,g \rangle 
:= \
\int_{\DDi} f(\by) \overline{g(\by)} \, \mathrm{d} \mu(\by).
$
The norm in $L_2(\DDi, \mu)$ is defined as $\|f\| := \langle f,f \rangle^{1/2}$. In what follows, 
$\mu$ is fixed, and for convention, we write $L_2(\DDi, \mu):= L_2(\DDi)$. Furthermore, let $L_2(\IIm)$ be the usual Hilbert space of Lebesgue square-integrable functions on $\IIm$ based on the univariate 
normed Lebesgue measure. 
Then, we define 
\begin{equation} \nonumber
L_2(\IIm \times \DDi)
:= \
L_2(\IIm) \otimes L_2(\DDi).
\end{equation}

Let $\{l_k\}_{k=0}^\infty$ be the family of univariate orthonormal Legendre polynomials in 
$L_2(\II)$ and let $\{h_k\}_{k=0}^\infty$ be the family of univariate orthonormal Hermite polynomials in 
$L_2(\RR,  \mu)$ with associated univariate measure $\mathrm{d} \mu(y) := (2 \pi)^{-1/2} \exp(-y^2/2)\mathrm{d} y$. We set
\begin{equation} \label{L-polynomials}
\{\phi_{1,k}\}_{k \in \ZZ_+}
:= \ \{l_k\}_{k \in \ZZ_+},
\quad \mbox{and} \quad 
\{\phi_{2,s}\}_{s \in \ZZ_+} 
:= \ 
\begin{cases}
\{l_s\}_{s \in \ZZ_+}, \ & \DD = \II, \\[1ex]
\{h_s\}_{s \in \ZZ_+}, \ & \DD = \RR.
\end{cases} 
\end{equation}
For $(\bk,\bs)\in \ZZmp \times \ZZips$, we define
\begin{equation} \nonumber
\phi_{(\bk,\bs)}(\bx,\by)
:= \ 
\phi_{1,\bk}(\bx)\phi_{2,\bs}(\by), 
\quad \phi_{1,\bk}(\bx):= \ \prod_{j=1}^m \phi_{1,k_j}(x_j), \
\phi_{2,\bs}(\by):= \ \prod_{j \in \supp(\bs)} \phi_{2,s_j}(y_j).
\end{equation}
Note that $\{\phi_{(\bk,\bs)}\}_{(\bk,\bs) \in \ZZmp \times \ZZips}$ is an orthonormal basis of 
$L_2(\IIm \times \DDi)$. Moreover, for every 
$f \in L_2(\IIm \times \DDi)$, we have the following expansion 
\begin{equation} \nonumber
f = \sum_{(\bk,\bs) \in \ZZmp \times \ZZips} f_{\bk,\bs}\phi_{(\bk,\bs)},
\end{equation}
where for  $(\bk,\bs) \in \ZZmp \times \ZZips$, 
$f_{\bk,\bs}: = \langle f, \phi_{(\bk,\bs)} \rangle$  denotes the $(\bk,\bs)$th coefficient of $f$ with respect to the orthonormal basis $\{\phi_{(\bk,\bs)}\}_{(\bk,\bs) \in \ZZmp \times \ZZips}$. Hence, by putting 
$H_1= L_2(\II, \frac{1}{2} \mathrm{d}x)$ and $H_2 = L_2(\DD)$, we have
\begin{equation} \nonumber
L_2(\IIm \times \DDi)
\ = \
\Ll 
:= \
H_1^m \otimes H_2^\infty.
\end{equation}

Next, based on the orthonormal bases 
$\{\phi_{1,k}\}_{k \in I}$ and $\{\phi_{2,s}\}_{s \in J}$  for the two spaces $H_1$ and $H_2$ in (\ref{H1H2}) with 
$I= J=\ZZ_+$ as defined in \eqref{L-polynomials}, respectively, we construct the associated Sobolev-Korobov-type spaces $K^{\beta}(\IIm \times \DDi)$ and the associated Sobolev-analytic-type spaces 
$A^{\alpha,\br,p,q}(\IIm \times \DDi)$ for the nonperiodic case as
\[
K^{\beta}(\IIm \times \DDi):= \Gg, \quad A^{\alpha,\br,p,q}(\IIm \times \DDi):= \Hh,
\]
where $\Gg$ and $\Hh$ are defined\footnote{Note again a slight abuse of notation here. In \eqref{[GgHh]}, the $K^\beta$ only relates to $H^m_1$. 
}
as in \eqref{[GgHh]} with $F= A^{\br,p,q}$ and the $5$-tuple 
$m,\alpha,\br,p,q$ as in \eqref{m,a,br,p,q[A]}.
Furthermore, we set  
\[
K^{\beta}_d(\IIm \times \DDi):= \Gg_d, \quad A^{\alpha,\br,p,q}_d(\IIm \times \DDi):= \Hh_d.
\]

For $T \ge 0$, we  denote by $\Pp^A(T)$ the subspace of polynomials $g$ of the form 
\begin{equation} \label{Pp^A(T)}
g
:= \
\sum_{(\bk,\bs) \in G_{\ZZmp \times \ZZips}^A(T)}   g_{\bk,\bs} \phi_{(\bk,\bs)},
\end{equation}
where 
\begin{equation} \label{G_{ZZmp X ZZips}^A}
G_{\ZZmp \times \ZZips}^A(T) 
:= \ 
\big\{(\bk,\bs)  \in \ZZmp \times \ZZips: \lambda_A(\bk,\bs)   \leq T\big\},
\end{equation}
and 
\begin{equation} \label{def_lam_A}
\lambda_A(\bk,\bs) 
 := \
 \max_{1\le j \le m}(1 + k_j)^a \, \prod_{j=1}^\infty (1 + p s_j)^{-q}\, \exp ((\br,\bs)), \quad 
(\br,\bs):= \sum_{j=1}^\infty r_js_j.
\end{equation}

For $f \in L_2(\IIm \times \DDi)$ and $T \ge 0$, we define the operator $\Ss_T^A$ as
\begin{equation} \label{[S_T]-nonperiodic}
\Ss_T^A(f)
:= \
\sum_{(\bk,\bs) \in G_{\ZZmp \times \ZZips}^A(T)}  f_{\bk,\bs}\phi_{(\bk,\bs)}.
\end{equation}
Let $U^{\alpha,\br,p,q}(\IIm \times \DDi):= \Uu$ and $U_d^{\alpha,\br,p,q}(\IIm \times \DDi):= \Uu_d$ be the unit ball in $A^{\alpha,\br,p,q}(\IIm \times \DDi)$ and $A_d^{\alpha,\br,p,q}(\IIm \times \DDi)$, respectively.
Now, from the Theorems \ref{theorem[n_e]}, \ref{theorem[Extension]} and \ref{theorem[lower-bound-G^S_IJ]}, the results on hyperbolic cross approximation in infinite tensor product Hilbert spaces of Section 
\ref{Approximation in Hilbert spaces} can be reformulated for the approximation of nonperiodic functions in Sobolev-analytic-type spaces as follows.
\begin{theorem} \label{theorem[n_e]-nonperiodic}
Let $\alpha > \beta \ge 0$, let $a= \alpha - \beta$ and let the $5$-tuple 
$m,a,\br,p,q$ be given as in \eqref{m,a,br,p,q[A]}.
Suppose  that there hold the assumptions of Theorem \ref{theorem[E<]} if $p=0$, and the assumption of 
Theorem \ref{theorem[E<](2)} if  $p>0$. 
With
\begin{equation} \nonumber
n_\varepsilon(d)
:= \
n_\varepsilon {(U_d^{m,\alpha,\br,p,q}(\IIm \times \DDi), K^{\beta}_d(\IIm \times \DDi))}, \quad
n_\varepsilon
:= \
n_\varepsilon(U^{m,\alpha,\br,p,q}(\IIm \times \DDi), K^{\beta}(\IIm \times \DDi))
\end{equation}
we have for every $d \in \NN$ and every $\varepsilon \in (0,1]$
\begin{equation} \label{ineq[n_e<]-nonperiodic}
\lfloor \varepsilon^{- 1/(\alpha - \beta)} \rfloor^m - 1
\ \le \
n_\varepsilon(d)
\ \le \
n_\varepsilon
\ \le \
(3/2)^m \exp({M_{p,q}}) \, \varepsilon^{- m/(\alpha - \beta)}, 
\end{equation}
where $M_{p,q}$ is as in \eqref{M00} for $p=0$, and as in \eqref{Mpq} for $p>0$.
\end{theorem}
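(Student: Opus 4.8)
The plan is to recognize this statement as nothing more than the specialization of the abstract result Theorem~\ref{theorem[n_e]} to the concrete nonperiodic Legendre/Hermite setting, so that essentially all the genuine work has already been carried out in Sections~\ref{cardinality of HC} and~\ref{Approximation in Hilbert spaces}. First I would make the identification explicit: with $H_1 = L_2(\II,\tfrac12\mathrm{d}x)$, $H_2 = L_2(\DD)$ and the orthonormal bases from \eqref{L-polynomials}, one has $L_2(\IIm \times \DDi) = \Ll = H_1^m \otimes H_2^\infty$, and by construction the spaces $A^{\alpha,\br,p,q}(\IIm \times \DDi)$ and $K^{\beta}(\IIm \times \DDi)$ are precisely the associated spaces $\Ll^\lambda$ and $\Ll^\nu$ from \eqref{Hh} and \eqref{Gg} for the index choice $\Ii = \ZZmp$, $\Jj = \ZZips$ and $S = A$. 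Here $\nu(\bk,\bs) = \lambda_{m,\beta}(\bk)$ and $\lambda(\bk,\bs) = \lambda_{m,\alpha}(\bk)\,\rho_{\br,p,q}(\bs)$, so that $\Uu = U^{\alpha,\br,p,q}(\IIm \times \DDi)$ and $\Uu_d = U_d^{\alpha,\br,p,q}(\IIm \times \DDi)$.

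The key bookkeeping step is to match the approximation hyperbolic cross with the cardinality cross of Section~\ref{cardinality of HC}. Writing $a = \alpha - \beta$ and forming the ratio of the two weight sequences,
\[
\frac{\lambda(\bk,\bs)}{\nu(\bk,\bs)}
\ = \
\max_{1\le j \le m}(1 + k_j)^{a}\,\prod_{j=1}^\infty (1 + p s_j)^{-q}\,\exp\big((\br,\bs)\big)
\ = \
\lambda_A(\bk,\bs),
\]
which is exactly $\rho(\bk,\bs)$ of \eqref{[lambda{a,mu,br}]} on $\ZZmp \times \ZZips$. Hence the set $G_{\Ii \times \Jj}(1/\varepsilon)$ entering Lemma~\ref{lemma[n_e]} coincides with $E(1/\varepsilon)$ from \eqref{E(T)}, and likewise the truncated set $G_{\Ii \times \Jjd}(1/\varepsilon)$ is the corresponding $d$-dimensional cross. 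Lemmata~\ref{lemma[n_e]} and~\ref{lemma[n_e(d)]} then sandwich both $n_\varepsilon$ and $n_\varepsilon(d)$ between the respective cardinality minus one and the cardinality itself, while the containment $\Hh_d \subset \Hh$ with identical norms yields the middle inequality $n_\varepsilon(d) \le n_\varepsilon$.

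It remains only to insert the two cardinality estimates, and I expect no genuine obstacle here since the hard analysis is already encapsulated in the earlier theorems; this is simply the $S = A$, $\Ii = \ZZmp$, $\Jj = \ZZips$ instance of Theorem~\ref{theorem[n_e]}, for which $A^S(\varepsilon^{-1}) = \varepsilon^{-m/(\alpha-\beta)}$ by \eqref{[A^S(T)]} with no surviving logarithmic factors. For the upper bound I would apply Theorem~\ref{theorem[E<]} when $p=0$ and Theorem~\ref{theorem[E<](2)} when $p>0$, under the respective summability hypotheses \eqref{M00} and \eqref{Mpq} that are exactly the imposed assumptions, to bound $|E(1/\varepsilon)|$ by the constant $C_{\ZZmp \times \ZZips}^A$ from \eqref{[C_{Ii otimes Jj}^S]} times $\varepsilon^{-m/(\alpha-\beta)}$. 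For the lower bound I would invoke Theorem~\ref{theorem[lower-bound-G^S_IJ]} with $T = 1/\varepsilon$, giving $|G_{\Ii \times \Jjd}^A(1/\varepsilon)| \ge \lfloor \varepsilon^{-1/(\alpha-\beta)}\rfloor^m$ and therefore $n_\varepsilon(d) \ge \lfloor \varepsilon^{-1/(\alpha-\beta)}\rfloor^m - 1$. The only remaining points are clerical: confirming that the conditions defining $M_{p,q}$ transfer verbatim from the cardinality theorems and that the constant in the final display is read off correctly from $C_{\ZZmp \times \ZZips}^A$.
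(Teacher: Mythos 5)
Your proposal is correct and is precisely the paper's own route: the theorem is obtained there as a direct specialization of Theorem \ref{theorem[n_e]} (i.e.\ Lemmata \ref{lemma[n_e]} and \ref{lemma[n_e(d)]} combined with Theorems \ref{theorem[E<]}, \ref{theorem[E<](2)} and \ref{theorem[lower-bound-G^S_IJ]}) under exactly the identifications you make, namely $\Hh = \Ll^\lambda$, $\Gg = \Ll^\nu$ and $\lambda(\bk,\bs)/\nu(\bk,\bs) = \lambda_A(\bk,\bs)$ with $G_{\ZZmp\times\ZZips}(1/\varepsilon) = E(1/\varepsilon)$. One caveat on your final ``clerical'' step: reading the constant off $C^A_{\ZZmp\times\ZZips}$ in \eqref{[C_{Ii otimes Jj}^S]} gives $(3/2)^{2m}\exp(M_{p,q})$, not the $(3/2)^{m}\exp(M_{p,q})$ printed in \eqref{ineq[n_e<]-nonperiodic}; this mismatch is internal to the paper (its own cardinality bounds also yield $(3/2)^{2m}$), so it is not a gap in your argument, but the bound your proof actually delivers carries the larger constant.
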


Note at this point that we discussed here only the example involving the Sobolev-analytic-type space.  
Other combinations can be defined and dealt with in an analogous way. 

\section{Application}\label{application}
We now give an example of the application of our approximation results in the field of uncertainty quantification. We focus on the notorious model problem
\begin{equation}\label{spde}
-\mathrm{div}_\bx(\sigma(\bx,\by)\nabla_\bx u(\bx,\by)) = f(\bx) \quad \bx \in \II^m \quad \by \in \DD^\infty,
\end{equation} 
with homogeneous boundary conditions $u(\bx,\by)=0$, $\bx\in \partial \II^m $, $\by \in \DD^\infty$,
i.e. we have to find a real-valued function $u: \II^m \times \DD^\infty \to \mathbb{R}$ such that (\ref{spde}) holds $ \mu$-almost everywhere, where $\DD^\infty$ is either $\IIi$ or $\RRi$ and $ \mu$ is the infinite tensor product probability measure on $\DD^\infty$ defined in Subsection \ref{Nonperiodic approximation}.
Here, $\II^m$ represents the domain of the physical space, which  is usually $m=1,2,3$-dimensional, and $\DD^\infty$ represents the infinite-dimensional stochastic or parametric domain. We assume that there holds the uniform ellipticity condition $0 < \sigma_{min} \leq \sigma (\bx,\by) \leq \sigma_{max}< \infty$ for $\bx \in \II^m$ and 
$ \mu$-almost everywhere for $\by \in \DD^\infty$.
In a typical case, $\sigma(\bx,\by)$ allows for an expansion
$
\sigma(\bx,\by)
  = 
\bar{\sigma}(\bx) + \sum_{j=1}^\infty \psi_j(\bx) y_j, 
$
where $\bar{\sigma} \in L_\infty(\II^m)$ and 
$(\psi_j)_{j=1}^\infty \subset L_\infty(\II^m)$. A choice for $(\psi_j)_{j=1}^\infty$ in sPDEs is the Karh\'unen-Lo\`eve basis where $\bar{\sigma}$ is the average of $\sigma$ and the $y_j$ are pairwise decorrelated random variables.
Another situation is the case, where the logarithm of the diffusion coefficient $\sigma(\bx,\by)$ can be represented by a centered Karh\'unen-Lo\`eve expansion
$\sigma(\bx,\by)
:= \
\exp\left(\sum_{j=1}^\infty y_j \psi_j(\bx)\right). 
$

Note here that $u(\bx,\by)$ can be seen as a map $u(\cdot,\cdot): \DD^\infty \to H^\beta(\II^m)$ of the second variable $\by$. 
Usually, for the elliptic problem (\ref{spde}), we consider the smoothness indices $\beta=0$ or $\beta=1$. 
In general, the solution $u$ lives in the Bochner space
$$
L_p(\DD^\infty,H^\beta(\II^m)) :=\left\{ u: \DD^\infty\to H^\beta(\II^m) : \int_{\DD^\infty} \|u(\bx,\by)\|_{H^\beta(\II^m)}^p \mathrm{d} \mu(\by) < \infty \right\}
$$
(with a natural modification for $p=\infty$), where $H^\beta(\II^m)$ is the Sobolev space of smoothness $\beta$.
For reasons of simplicity, we restrict ourselves to the Hilbert space setting and consider $p=2$. Then, since $H^\beta(\II^m)$ is a Hilbert space as well,
$L_2(\DD^\infty,H^\beta(\II^m))$ is isomorphic to the tensor product space $H^\beta(\II^m) \otimes L_2(\DD^\infty)$
and we can measure $u$ in the associated norm $\|\cdot\|_{H^\beta(\II^m) \otimes L_2(\DD^\infty)}$.

Furthermore, depending on the properties of the diffusion function $\sigma(\bx,\by)$ and the right hand side $f(\bx)$, we have higher regularity of $u$ in both,
$\bx$ and $\by$.
While we directly may assume that $u(\cdot, \by)$ is in $H^{\alpha}(\II^m), \alpha > \beta$, pointwise for each 
$\by$, the regularity of $u$ in $\by$ needs further consideration. 
It is known that, under mild assumptions on $\sigma(\bx,\by)$, the solution of (\ref{spde}) depends analytically on the variables in $\by$, see e.g. \cite{Bab07,BNTT,CDS10b,CDS10a}. 
Moreover, there are estimates that show a mixed-type analytic regularity of $u$ in the $\by$-part, i.e. we indeed have\footnote{Consequently, there is here no issue with our specific choice of norm as in \cite{PW10}.} 
$u(\bx,\cdot) \in A^{\br, p,q}(\DD^\infty)$.
For the simple affine case with a product Legendre expansion, this will be discussed in more detail in the appendix.
  For estimates on the expansion coefficients, see e.g. \cite{CDS10a}, formula (4.9),  \cite{CDS10b}, section 6, or \cite{Chkifathesis}, subsection 1.3.2, for the case of uniformly elliptic diffusion and \cite{HS14} for the case of log-normally distributed diffusion, and see also \cite{BNTT2,TWZ15}.
Analogous estimates and derivations hold (after some tedious calculations) for more complicated non-affine settings and diffusions, see, e.g. \cite{Chkifathesis,TWZ15}, provided that corresponding proper assumptions on $ \sigma(\bx, \by)$ and thus on $u(\bx,\by)$ are valid.  

In addition, and this is less noticed, we also have a mixed-type regularity of $u$ {\em between} the $\bx-$ and the $\by-$part. 
To be precise, a calculation following the lines of \cite{Bab07} which involves successive differentiation of (\ref{spde}) with respect to $\by$ 
reveals that the solution $u$ belongs to the Bochner space $A^{\br,p,q}(\DD^\infty,H^\alpha(\II^m))$ 
provided that $\sigma(\bx,\by)$ and $f(\bx)$ are sufficiently smooth. Since $A^{\br,p,q}(\DD^\infty,H^\alpha(\II^m))$ is isomorphic to $H^\alpha(\II^m) \otimes A^{\br,p,q}(\DD^\infty)$ we indeed have mixed regularity between $\bx$ and $\by$. 
In the end, this is a consequence of the chain rule of differentiation with respect to $\by$ and the structure of the sPDE (\ref{spde}) which involves derivatives with respect to $\bx$ only. If there is not enough smoothness, then $A^{\br,p,q}(\DD^\infty)$ has to be replaced by $K^{\bf r}(\DD^\infty)$ (with some different $\br$), but the mixed regularity structure between $\bx-$ and $\by-$part remains.\footnote{The determination of $\br$  from e.g. the covariance structure of $\sigma(\bx,\by)$ is however not an easy task. In general, a direct functional map from the covariance eigenvalues (if allocatable) to the sequence $\br$ is not available at least to our knowledge, and only estimates can be derived. 
}
Thus,
we first consider the case where $u(\bx,\cdot)$ is in a space of analytic-type smoothness $\br,p,q$ for each $\bx$ with certain smoothness indices $\br,p,q$.
Then, we consider the case where $u(\bx,\cdot)$ is in a space of Korobov-type smoothness $\br$.

In what follows, we keep the notation of Sections \ref{Approximation in Hilbert spaces} and \ref{Function approximation}, and in particular, the notation of Subsection \ref{Nonperiodic approximation}. Let $\alpha > \beta \ge 0$, $a= \alpha - \beta$ and the $5$-tuple $m,a,\br,p,q$ be given as in \eqref{m,a,br,p,q[A]}.
For convenience, we allow, again by a slight abuse of notation, for the identification 
\[
K^{\beta}(\II^m \times \DD^\infty)
\ = \ 
K^{\beta}(\II^m) \otimes L_2(\DD^\infty)
\ = \ 
H^\beta(\II^m) \otimes L_2(\DD^\infty)
\]
and allow for $u$ to belong to the space of analytic-type smoothness 
\[
A^{\alpha,\br,p,q}(\II^m \times \DD^\infty)
\ = \
K^{\alpha}(\II^m) \otimes A^{\br,p,q}(\DD^\infty)
\ = \
H^{\alpha}(\II^m) \otimes A^{\br,p,q}(\DD^\infty)
\]
We then are just in the situations which we analyzed in Subsection \ref{Nonperiodic approximation}.
Let us combine Theorems \ref{theorem[n_e]}, \ref{theorem[E<]} and \ref{theorem[E<](2)} and reformulate them in a more conventional form. 
Taking the hyperbolic cross 
$G^A(T):=G_{\ZZmp \times \ZZips}^A(T)=E(T)$ as in \eqref{E(T)} and \eqref{G_{ZZmp X ZZips}^A}, using the orthogonal projection $\Ss_T^A$ as in \eqref{[S_T]-nonperiodic} and putting 
$n:= |G^A(T)|$, we redefine $\Ss_T^A$ as a linear operator of rank $n$
\[
L_n:  K^{\beta}(\II^m) \otimes L_2(\DD^\infty) \to \Pp^A(T),
\] 
where $\Pp^A(T)$ is defined by \eqref{Pp^A(T)}.
Suppose  that there hold  the assumptions of 
Theorem \ref{theorem[E<]} if $p=0$, and the assumptions of Theorem \ref{theorem[E<](2)} if $p>0$. From Theorems \ref{theorem[n_e]}, \ref{theorem[E<]} and \ref{theorem[E<](2)} we obtain an error bound of the approximation of the solution $u$ by $L_n$ as
\[
\|u - L_n(u)\|_{K^{\beta}(\II^m) \otimes L_2(\DD^\infty)} 
\ \leq \ 
2^{\alpha - \beta} \exp[\,(\alpha - \beta)\,M_{p,q}/m] \,  
n^{- (\alpha - \beta)/m} \, \| u\|_{K^{\alpha}(\II^m) \otimes A^{\br,p,q}(\DD^\infty)}.
\]

Next, we consider the case where $u(\bx,\cdot)$ is only in a space of Korobov-type smoothness $\br$. 
To this end, recall the details of Subsection \ref{Nonperiodic approximation}. 
Again, based on  the orthonormal bases 
$\{\phi_{1,k}\}_{k \in I}$ and $\{\phi_{2,s}\}_{s \in J}$ 
for the two spaces $H_1$ and $H_2$ in (\ref{H1H2}) with 
$I= J=\ZZ_+$ as defined in \eqref{L-polynomials}, respectively,
we construct the Korobov-type spaces 
\[
K^{\beta}(\IIm \times \DDi):= \Gg, \quad K^{\alpha,\br}(\IIm \times \DDi):= \Hh,
\]
where $\Gg$ and $\Hh$ are defined as in \eqref{[GgHh]} with $F= K^{\alpha,\br}$ and the triple 
$m,\alpha,\br$ is from \eqref{m,a,br[K]}.

For $T \ge 0$, we denote by $\Pp^K(T)$ the subspace of polynomials $g$ of the form 
\begin{equation}
g
\ = \ 
\sum_{(\bk,\bs) \in G^K(T)}   g_{\bk,\bs} \phi_{(\bk,\bs)},
\end{equation}
where 
\begin{equation} \label{G_{ZZmp X ZZips}^K}
G^K(T)
\ = \
G_{\ZZmp \times \ZZips}^K(T) 
:= \ 
\big\{(\bk,\bs)  \in \ZZmp \times \ZZips: \lambda_K(\bk,\bs)   \leq T\big\},
\end{equation}
and 
\begin{equation} \nonumber
\lambda_K(\bk,\bs) 
 := \
 \max_{1\le j \le m}(1 + |k_j|)^a \, \prod_{j=1}^\infty (1 +  s_j)^{r_j}.
\end{equation}
For $f \in L_2(\IIm \times \DDi)$ and $T \ge 0$, we define the operator $\Ss_T^K$ as
\begin{equation} \label{[S_T^k]-nonperiodic}
\Ss_T^K(f)
:= \ 
\sum_{(\bk,\bs) \in G^K(T)}  f_{\bk,\bs}\phi_{(\bk,\bs)}.
\end{equation}
Then, we see from Theorem \ref{theorem[n_e]} that for arbitrary $T \geq 1$
\begin{equation} \label{|f - S_T^K(f)|}
\|f - \Ss_T^K(f)\|_{K^{\beta}(\II^m) \otimes L_2(\DD^\infty)}
\ \le  \
T^{-1}\|f\|_{K^{\alpha}(\II^m) \otimes K^{\br}(\DD^\infty)} \, , \qquad  \forall f \in K^{\alpha,\br}(\IIm \times \DDi).
\end{equation}
On the other hand, let $\alpha > \beta \ge 0$, let $a= \alpha - \beta$ and let the triple 
$m,a,\br$ be given as in \eqref{m,a,br[K]}.
Suppose  that there hold the assumptions of Theorem \ref{theorem[G<]} and moreover, $r > (\alpha - \beta)/m$.
Then, we have by Theorem \ref{theorem[G<]} for every $T \geq 1$,
\begin{equation} 
|{G^K}(T)|
\ \le \ 
C\, T^{m/(\alpha-\beta)}, 
\end{equation}
where $C:= C(a,r,m,t)$ is as in \eqref{C(a,r,m,t)} 
(and with $G^K(T)= G(T)$ in \eqref{G(T)}). 
Setting $n:= |G^K(T)|$, we redefine the orthogonal projection $\Ss_T^K$ as a linear operator of rank $n$
\[
L_n: K^{\beta}(\II^m) \otimes L_2(\DD^\infty) \to  \Pp^K(T).
\] 
From \eqref{|f - S_T^K(f)|} and 
Theorem \ref{theorem[G<]}, we obtain an error bound of the approximation of $u$ by $L_n$ as
\[
\|u - L_n(u)\|_{K^{\beta}(\II^m) \otimes L_2(\DD^\infty)} 
\ \leq \ 
C^{(\alpha - \beta)/m} \,  
n^{- (\alpha - \beta)/m} \, \| u\|_{K^{\alpha}(\II^m) \otimes K^{\br}(\DD^\infty)}.
\]
Note finally that the results in this section can be extended without difficulty to the periodic setting or to mixed periodic and nonperiodic settings of \eqref{spde}.

\section{Concluding remarks}\label{conclusion}

In this article we have shown how the determination of the $\varepsilon$-dimension for the approximation of infinite-variate function classes with anisotropic mixed smoothness can be reduced to the problem of tight bounds of the cardinality of associated hyperbolic crosses in the infinite-dimensional case.
Moreover, we explicitly computed such bounds for a range of function classes and spaces.
 Here, the approximation was based on linear information. 
The obtained upper and lower bounds of the $\varepsilon$-complexities as well as the  
convergence rates of the associated approximation error
 are completely independent of any parametric or stochastic dimension provided that moderate and quite natural summability conditions on the smoothness indices of the underlying infinite-variate spaces are valid.  These parameters are only contained in the order constants. 
This way, linear approximation theory becomes possible in the infinite-dimensional case and corresponding infinite-dimensional problems get manageable.  

For the example of the approximation of the solution of an elliptic stochastic PDE it indeed turned out that the infinite-variate stochastic part of the problem has completely disappeared from the cost complexities and the convergence rates and influences only the constants.
Hence, these problems are strongly polynomially tractable (see \cite{NW08} for a definition).
Note at this point that the
$m$-variate physical part of the problem and the infinite-variate stochastic part are not separately treated in our analysis but are {\em collectively} approximated where the hyperbolic cross approximation involves a simultaneous projection onto both parts which profits from the mixed regularity situation and the corresponding product construction.
Here, we restricted ourselves to a Hilbert space setting and to linear information. Furthermore, we considered an a priori, linear approximation approach.

We believe that our analysis can be generalized to the Banach space situation, in particular, to the $L_1$- and $L_\infty$-setting 
which is related to problems of interpolation, integration and collocation. 
Then, instead of linear information, standard information via point values is employed and non-intrusive techniques can be studied, 
which are widely used in practice. 
To this end, the efficient approximative computation of the coefficients $f_{\bk,\bs}$ still needs to be investigated and analyzed in detail. 
We hope that some of the ideas and techniques presented in the this paper will be useful there.

For our analysis, we  assumed the a priori knowledge of the smoothness indices and their monotone ordering. This is sound if these smoothness indices stem from an 
eigenvalue analysis of the covariance structure of the underlying problem and are explicitly known or at least computable.
If this is however not the case, then, instead of our a priori definition of the hyperbolic crosses from the smoothness indices, we may generate suitable sets of active indices in an a posteriori fashion by means of dimension-adaptivity in a way which is similar to dimension-adaptive sparse grid methods \cite{GGr,GOe}.    
  
Finally, recall that we assume linear information and an associated cost model which assigns a cost of $O(1)$ to each evaluation of a coefficient $f_{\bk,\bs}$.
Or, the other way around, we just count each index $(\bk,\bs)$ in a hyperbolic cross as one.
We may also consider more refined cost models which take into account that the number $\#(\bk,\bs)$ of non-zero entries of an index $(\bk,\bs)$ is always finite.
This would allow to relate the cost of an approximation of the coefficient $f_{\bk,\bs}$ to the number of non-zero entries
of each $(\bk,\bs)$ in a hyperbolic cross induced by $T$. For examples of such refined cost models, see e.g. the discussion in \cite{DKS} and the references cited therein.
In our case, this would lead to a cost analysis where the cardinality of the respective hyperbolic cross is not just counted by adding up ones
in the summation, but by instead adding up values which depend on $\#(\bk,\bs)$ via e.g. a function thereof, which reflects the respective cost model.

\section*{Acknowledgments}
Dinh Dung's research work was funded by Vietnam National Foundation for Science and Technology Development (NAFOSTED) under Grant No. 102.01-2014.02.
Michael Griebel was partially supported by the Sonderforschungsbereich 1060 {\em The Mathematics of
Emergent Effects} funded by the Deutsche Forschungsgemeinschaft.
The authors thank the Institute for Computational and Experimental Research in Mathematics (ICERM) at Brown University for its hospitality and additional support during the preparation of this manuscript. They thank Alexander Hullmann, Christian Rieger and Jens Oettershagen for valuable discussions. 

\section*{Appendix}
We now show for a simple model problem of the type (\ref{spde}) that its solution $u$ belongs to the space $ K^\alpha(\II^m) \otimes A^{\br,p,q}(\DD^\infty)$ for any $p,q \geq 0$
provided that a certain assumption on $\sigma$ and thus a certain condition on $\br$ is satisfied. 
Consequently, our theory is indeed applicable here.

We consider the problem 
 \begin{align}\label{spde2}
-\mathrm{div}_\bx \left(\sigma(\bx,\by) \nabla_\bx u(\bx,\by) \right) = f(\bx), \quad u|_{\partial \mathbb{I}^{m}}=0.
\end{align}
Here, we assume that 
$0<\sigma_{\min} \le \sigma(\bx,\by) \le \sigma_{\max}<\infty$ for all $\bx \in \mathbb{I}^{m}$ and all $\by \in \mathbb{D}^{\infty}$.
Moreover, we follow closely the seminal article \cite{CDS10b} and consider the linear affine setting
\begin{equation}\label{afflinear}
\sigma(\bx,\by)= \bar {\sigma}(\bx)+\sum_{j=1}^\infty \psi_{j}(\bx) y_{j}.
\end{equation}
We furthermore assume that there is $0<\beta<\alpha<\infty$ and $a:=\alpha-\beta$ such that\footnote{Especially, we even may assume here $K^\beta \simeq H_{0}^{1}(\mathbb{I}^{m})$, i.e. $\beta =1$, and $K^\alpha \simeq H^2(\mathbb{I}^{m}) \cap H_{0}^{1}(\mathbb{I}^{m})$, i.e. $\alpha =2$, and thus $a=1$.}
 \begin{align*}
 H_{0}^{1}(\mathbb{I}^{m}) \hookrightarrow K^{\beta} \quad \text{and} \quad \left\{ w \in H_{0}^{1}(\mathbb{I}^{m})\ : \ \Delta w \in L^{2}(\mathbb{I}^{m}) \right\} \hookrightarrow K^{\alpha} .
 \end{align*}
Recall our definitions (\ref{H1H2}) and (\ref{def[g-presentation]}) of the spaces $\Ll$ and $\Ll^\lambda$.
Also recall our definition (\ref{[lambda{a,mu,br}]}) of the scalar
 \begin{align}\label{weight}
 \rho(\bk,\bs):=\max_{1\le j \le m}\left(1+|k_{j}| \right)^{a} \prod_{j=1}^{\infty}\left( 1+p|s_{j}|\right)^{-q} \exp\left(\sum_{j=1}^{\infty} r_{j} |s_{j}| \right)
 =: \lambda_{m,a}(\bk)\rho_{\br,p,q}(\bs)=:\lambda_{A}(\bk,\bs)
 \end{align}
together with the notations of $\lambda_{m,a}(\bk)$ from (\ref{defa_index}), $\rho(\br,p,q)$ from (\ref{[rho^br,pq]}) and $\lambda_{A}(\bk,\bs)$ 
from (\ref{def_lam_A}), respectively. We encounter the non-periodic setting here and thus have only non-negative values for the indices $\bk$ and $\bs$ in the sets 
$\Ii,\Jj$, i.e. $(\bk,\bs) \in \ZZ^m_+ \times \ZZ^\infty_+$.
Now, for the orthogonal basis $ \phi_{\bk,\bs}:=\phi_{1,\bk} \otimes \phi_{2,\bs}$ of (\ref{oxxxxx}) for $\Ll $ from (\ref{H1H2}), we assume for reasons of simplicity some suitable orthogonal basis for $\phi_{1,\bk}$ and we specifically use products of Legendre polynomials for $\phi_{2,\bs}$. 
We set 
\begin{align*}
\phi_{2,n}(y)=\frac{(-1)^{n}\sqrt{2n+1} }{2^{n}n!} \frac{d^{n}}{dy^{n}} \left(1-y^{2}\right)^{n}
\end{align*}
and $
\phi_{2,\bs}=\bigotimes_{\genfrac{}{}{0pt}{}{j\in \mathcal{J}}{s_{j}\neq 0}} \phi_{2,s_{j}},
$ c.f. (\ref{ozzzzz}).
Furthermore, we have that 
\begin{align*}
\int_{-1}^{1} \phi_{2,s_{j}}(y) \phi_{2,s_{k}}(y) \frac{dt}{2} = \delta_{k,s}, \quad \text{and} \quad \max_{-1 \le y \le 1} \left| \phi_{2,s_{j}}(y) \right| = \sqrt{2s_{j}+1} .
\end{align*}
We consider a series representation of the solution of (\ref{spde2}) as
\begin{align*}
u(\bx,\by)=\sum_{\bs \in \mathcal{J}} \hat{u}_{2,\bs}(\bx) \phi_{2,\bs}(\by).
\end{align*}
For our simple affine case with product Legendre expansion, explicit estimates for the expansion coefficients $ \hat{u}_{2,\bs}(\bx) $ can be found in \cite{CDS10b}, section 6, or \cite{Chkifathesis}, subsection 1.3.2. 
To be precise, we can use Corollary 6.1. of \cite{CDS10b} to obtain
\begin{align} \label{nonprodest}
\left\| \hat{u}_{2,\bs}\right\|_{H_{0}^{1}(\mathbb{I}^{m})} \le  \left(\frac{\left\|f \right\|_{H^{-1}(\mathbb{I}^{m})} }{\sigma_{\min}} \right) \frac{\left| \bs \right|!}{\bs!} \prod_{j=1, s_{j}\neq 0}^\infty \left(\frac{\left\|\psi_{j} \right\|_{L^{\infty}(\mathbb{I}^{m})} }{\sqrt{3}\sigma_{\min}} \right)^{s_{j}}=:B  \frac{\left| \bs \right|!}{\bs !} \bb^{\bs}.
\end{align}
Next, we have
\begin{align*}
\rho_{\br,p,q}(\bs)\left\| \hat{u}_{2,\bs}\right\|_{H_{0}^{1}(\mathbb{I}^{m})}&= \prod_{j=1, s_j \neq 0}^{\infty}\left( 1+ps_{j}\right)^{-q} \exp\left(\sum_{j=1}^{\infty} r_{j} s_{j} \right)\left\| \hat{u}_{2,\bs}\right\|_{H_{0}^{1}(\mathbb{I}^{m})}\\
&\le B \frac{ \left| \bs \right|!}{\bs !}  \prod_{j=1, s_j \neq 0}^{\infty}\left(  \left( 1+ps_{j}\right)^{-q} \exp\left(r_{j} s_{j} \right) b_{j}^{s_{j}} \right).
\end{align*}
Moreover, since $\left( 1+ps_{j}\right)^{-q} \leq 1$, this yields
\begin{align*}
\rho_{\br,p,q}(\bs)\left\| \hat{u}_{2,\bs}\right\|_{H_{0}^{1}(\mathbb{I}^{m})} \le B \frac{\left| \bs \right|!}{\bs !}  \prod_{j=1}^{\infty} \exp\left(r_{j} s_{j} \right) b_{j}^{s_{j}} =  B \frac{\left| \bs \right|!}{\bs !}    \prod_{j=1}^{\infty} \left(\exp\left( r_{j}\right)b_j \right)^{s_{j}}=: B \frac{\left| \bs \right|!}{\bs !} \tilde{\bb}(\br)^{\bs}
\end{align*}
with 
\begin{align*}
\tilde{b}_{j}(\br):= \frac{\left\|\psi_{j} \right\|_{L^{\infty}(\mathbb{I}^{m})} }{\sqrt{3}a_{\min}}\exp\left( r_{j}\right).
\end{align*}
Now, we have
\begin{align*}
&\left\|u \right\|_{K^\alpha(\mathbb{I}^{m}) \otimes A^{\br,p,q}(\mathbb{D}^{\infty}) } \simeq \left\|u \right\|_{ A^{\br,p,q}(\mathbb{D}^{\infty},K^{\alpha}(\mathbb{I}^{m})) }
= \left\|\left(\left\| \hat{u}_{2,\bs}\right\|_{K^{\alpha}(\mathbb{I}^{m})} \right)_{\bs \in \mathcal{J}} \right\|_{\ell^{2}(\mathcal{J})} \le \left\|\left( B \frac{\left| \bs \right|!}{\bs !} \tilde{\bb}(\br)^{\bs} \right)_{\bs \in \mathcal{J}} \right\|_{\ell^{2}(\mathcal{J})} 
\end{align*}
and we want to derive a condition for it to be finite.
To this end, we have
\begin{align*}
\left\|\left( B \frac{\left| \bs \right|!}{\bs !} \tilde{\bb}(\br)^{\bs} \right)_{\bs \in \mathcal{J}} \right\|_{\ell^{2}(\mathcal{J})}&\le 
\left\|\left( B \frac{\left| \bs \right|!}{\bs !} \tilde{\bb}(\br)^{\bs} \right)_{\bs \in \mathcal{J}} \right\|_{\ell^{\infty}(\mathcal{J})}\left\|\left( B \frac{\left| \bs \right|!}{\bs !} \tilde{\bb}(\br)^{\bs} \right)_{\bs \in \mathcal{J}} \right\|_{\ell^{1}(\mathcal{J})}
\le \left\|\left( B \frac{\left| \bs\right|!}{\bs !} \tilde{\bb}(\br)^{\bs} \right)_{\bs \in \mathcal{J}} \right\|^{2}_{\ell^{1}(\mathcal{J})}.
\end{align*}
Now, we can apply Theorem 7.2. of \cite{CDS10b} and get
\begin{align*}
&\left\|\left( B \frac{\left| \bs \right|!}{\bs !} \tilde{\bb}(\br)^{\bs} \right)_{\bs \in \mathcal{J}} \right\|_{\ell^{1}(\mathcal{J})}=\sum_{\bs \in \mathcal{J}}\left|B \frac{\left| \bs\right|!}{\bs !} \tilde{\bb}(\br)^{\bs}\right|< \infty \\
&\Longleftrightarrow
\left\|\left(\tilde{b}_{j}(\br) \right)_{j \in \NN} \right\|_{\ell^{1}(\mathbb{N})}=\sum_{j=1}^{\infty}\frac{\left\|\psi_{j} \right\|_{L^{\infty}(\mathbb{I}^{m})} }{\sqrt{3}\sigma_{\min}}\exp\left( r_{j}\right) <1.
\end{align*}
Furthermore, we obtain in this case
\begin{align*}
\left\|\left( B \frac{\left| \bs\right|!}{\bs !} \tilde{\bb}(\br)^{\bs} \right)_{\bs \in \mathcal{J}} \right\|_{\ell^{1}(\mathcal{J})}=\sum_{\bs \in \mathcal{J}}\left|B \frac{\left| \bs \right|!}{\bs !} \tilde{\bb}(\br)^{\bs}\right| = \frac{B}{1-\left\|\left(\tilde{b}_{j}(\br) \right)_{j \in \NN} \right\|_{\ell^{1}(\mathbb{N})}}.
\end{align*}
Thus, it finally holds that
\begin{align*}
&\left\|u \right\|^2_{K^{\alpha}(\mathbb{I}^{m}) \otimes A^{\br,p,q}(\mathbb{D}^{\infty}) } \simeq \left\|\left(\left\| \hat{u}_{2,\bs}\right\|_{K^{\alpha}(\mathbb{I}^{m})} \right)_{\bs \in \mathcal{J}} \right\|^2_{\ell^{2}(\mathcal{J})} \le \left\|\left( B \frac{\left| \bs \right|!}{\bs !} \tilde{\bb}(\br)^{\bs} \right)_{\bs \in \mathcal{J}} \right\|^2_{\ell^{2}(\mathcal{J})} < \infty 
\end{align*}
is equivalent to
\begin{align}
\frac{1}{\sqrt{3}\sigma_{\min}}\sum_{j=1}^{\infty}\left\|\psi_{j} \right\|_{L^{\infty}(\mathbb{I}^{m})} \exp\left( r_{j}\right) <1 . \label{conditionrrr}
\end{align}
In other words, we have derived that $u\in K^\alpha(\II^m) \otimes A^{\br,p,q}(\DD^\infty)$ for any $p,q \geq 0$
provided that the condition (\ref{conditionrrr}) is satisfied.

For the case $p,q >0$, it remains to check the condition (\ref{Mpq}) from Theorem \ref{theorem[E<](2)}.
If we assume $r_{j}> pq$ and $r_j \geq \frac {q+\sqrt{qa/m}}{p}$, then we have with $a=\alpha-\beta$ that
\begin{align*}
M_{p,q}(m):=(1+p/2)^{qm/a}\sum_{j=1}^{\infty} \frac{\exp\left(-\frac{m}{2a} r_{j} \right)}{r_{j} \frac{m}{a}-pq \frac{m}{a}}
\end{align*}
is finite if
\begin{align}\label{Mpq2}
\sum_{j=1}^{\infty} \frac{\exp\left(-\frac{m}{2a} r_{j} \right)}{r_{j} \frac{m}{a}-pq \frac{m}{a}} \le \frac 1 {r_1 -pq}  \frac a m \sum_{j=1}^{\infty} \exp\left(-\frac{m}{2a} r_{j} \right)
< \infty .
\end{align}
Hence, with suitable constant $ c$, we have for any bounded $\sigma(\bx,\by)$ of the form (\ref{afflinear}) with
\begin{align*}
\left\|\psi_{j} \right\|_{L^{\infty}(\mathbb{I}^{m})} \le  c \cdot \exp \left(-\left( 1+\frac{m}{2a}\right) r_{j}\right) 
\end{align*}
that both conditions, i.e. (\ref{conditionrrr}) and (\ref{Mpq2}), are fulfilled.

Analogously, for the case $p=0,q \geq 0$,  it remains to check the condition (\ref{M00}) from Theorem \ref{theorem[E<]}.
Then we have that
\begin{align*}
M_{0,q}(m):=\sum_{j=1}^{\infty} \frac{1}{e^{mr_j/a}-1} 
\end{align*}
is finite if 
\begin{align}\label{M002}
\sum_{j=1}^{\infty} \frac{1}{e^{mr_j/a}-1} \le \frac 1{1-e^{-{mr_1/a}}} \sum_{j=1}^{\infty} \exp\left(-\frac{m}{a} r_{j} \right) < \infty .
\end{align}
Hence, with suitable constant $ c$, we have for any bounded $\sigma(\bx,\by)$ of the form (\ref{afflinear}) with
\begin{align*}
\left\|\psi_{j} \right\|_{L^{\infty}(\mathbb{I}^{m})} \le c \cdot \exp \left(-\left( 1+\frac{m}{a}\right) r_{j}\right) 
\end{align*}
that both conditions, i.e. (\ref{conditionrrr}) and (\ref{M002}), are fulfilled.
Moreover, in this case we do not encounter the factor $1+m/(2a)$ in the exponent but merely the improved factor $1+m/a$. 

Let us finally mention that, for our affine case with a product Legendre expansion and additionally using a $\delta$-admissibility condition (c.f. \cite{CDS10a}, formula (2.8)), 
there are explicit estimates for the corresponding expansion coefficients in \cite{CDS10a}, subsection 4.2, or \cite{Chkifathesis}, subsection 1.3.2, see also \cite{BNTT2}, proposition 7. 
In contrast to (\ref{nonprodest}), these bounds now have product structure. 
They match (\ref{[rho^br,pq]}) with associated values $\br$ and $p=2,q=1/2$
up to an $\br$-dependent product-type prefactor. It has the form $\prod_{j=1,s_j \neq 0}^\infty \phi(e^{r_j})$ with $\phi(t)=  \frac {\pi t} {2(t-1)}$ and looks independent of $s_j$ at first sight. But, due to the condition $s_j \neq 0$, it is indeed dependent on $\bs$.
After some calculation, it can be shown that there exist a modified sequence $\tilde \br$ and modified $\tilde p,\tilde q$ such that (\ref{[rho^br,pq]}) with, for example, the values $\tilde \br = \br-\frac{1+\varepsilon} 2 \log(j), \varepsilon > 0$ and $\tilde p=2,\tilde q=3/2$ is exactly matched. 
It is now a straightforward calculation to derive 
$u(\bx,\cdot) \in A^{\tilde \br,2,3/2}(\DD^\infty)$ 
due to its definition via 
(\ref{def[Kbr]}) 
and 
(\ref{norm[Kbr]}). 
 Of course, it remains to show that $M_{\tilde p,\tilde q}(m) < \infty$ is valid for these $\tilde \br$, compare (\ref{Mpq}).
To this end, we obtain for $\sum_{j=1}^\infty \exp(-\frac m {2a} \tilde r_j)/(\tilde r_j-\tilde p \tilde q)$  the upper bound $\sum_{j=1}^\infty \exp(-\frac m {2a} r_j) j^{\frac{m(1+\varepsilon)}{4a}}$ (up to a constant).
Thus, for example in the case $m=3,a=1$, a growth of $\br$  like $r_j\geq ({7/6+\varepsilon}) \log(j), \varepsilon > 0$ is sufficient. 
Note that this is indeed a quite mild condition.

\end{document}